\DeclareMathOperator{\ct}{ct}
\DeclareMathOperator{\Grid}{Grid}
\DeclareMathOperator{\len}{len}
\DeclareMathOperator{\supp}{supp}
\newcommand{\TV}[1]{\|#1\|_{\operatorname{TV}}}
\crefname{enumi}{Theorem}{Theorems}
\Crefname{enumi}{Theorem}{Theorems}
\renewcommand{\p@enumi}{\thetheorem}
\newenvironment{enumerate*}%
  {\begin{enumerate}[(I)]%
    \setlength{\itemsep}{10pt}%
    \setlength{\parskip}{0pt}}%
  {\end{enumerate}}
\newtheorem{theorem}{Theorem}[section]
\newtheorem{proposition}[theorem]{Proposition}
\newtheorem{corollary}[theorem]{Corollary}
\newtheorem{question}[theorem]{Question}
\newtheorem{lemma}[theorem]{Lemma}
\theoremstyle{definition}
\newtheorem{remark}[theorem]{Remark}
\newtheorem{example}[theorem]{Example}
\newtheorem{construction}{Construction}
\numberwithin{equation}{section}
\newcommand{\A}{\mathsf{A}}
\newcommand{\B}{\mathsf{B}}
\newcommand{\C}{\mathsf{C}}
\newcommand{\CC}{\mathbb C}
\newcommand{\RR}{\mathbb R}
\newcommand{\ZZ}{\mathbb Z}
\newcommand{\eps}{\varepsilon}
\newcommand{\be}{\mathbf e}
\newcommand{\bs}{\mathbf s}
\newcommand{\bt}{\mathbf t}
\newcommand{\bu}{\mathbf u}
\newcommand{\bv}{\mathbf v}
\newcommand{\bx}{\mathbf x}
\newcommand{\zero}{\mathbf 0}
\newcommand*{\EE}{
  \mathop{
    \mathchoice{\vcenter{\hbox{\larger[4]$\mathbb{E}$}}}
               {\kern0pt\mathbb{E}}
               {\kern0pt\mathbb{E}}
               {\kern0pt\mathbb{E}}
  }\displaylimits
}
\newcommand{\Zmod}[1]{\ZZ/{#1}\ZZ}
\begin{document}
\newcounter{segct}

\newcommand{\localDiagram}[1]{


    \addtocounter{segct}{1};

    \def\segmentSet{}
    \foreach \x/\y/\hx/\hy/\l/\loc in {#1} {
        \tikzmath{\leftx = 2*\x-1;}
        \tikzmath{\rightx = 2*\x+1;}
        \tikzmath{\bottomy = 2*\y-1;}
        \tikzmath{\topy = 2*\y+1;}
        \foreach \xl/\yl/\xr/\yr in 
        {\leftx/\bottomy/\rightx/\bottomy,
         \rightx/\bottomy/\rightx/\topy,
         \leftx/\topy/\rightx/\topy,
         \leftx/\bottomy/\leftx/\topy} {
            \ifcsname segment@\thesegct-\xl-\yl-\xr-\yr\endcsname
            \else
                \expandafter\gdef\csname segment@\thesegct-\xl-\yl-\xr-\yr\endcsname{} 
                \draw (\xl, \yl) -- (\xr, \yr);
            \fi
        }
    }     
    
    \foreach \x/\y/\hx/\hy/\l/\loc in {#1} {
    

        \node at (2*\x, 2*\y) [circle,fill,inner sep=1.5pt]{};

        \tikzmath{\nm = ((\hx)^2+(\hy)^2)^(1/2);}
        \tikzmath{\hxs = 0.8*\hx/\nm;}
        \tikzmath{\hys = 0.8*\hy/\nm;}
        \draw[->] (2*\x, 2*\y) -- (2*\x+\hxs,2*\y+\hys);

        \tikzmath{\ldist = 1.5*\nm^2;}
        \node[label={[label distance=-\ldist mm]\loc:{\tiny \l}}] at (2*\x+\hxs,2*\y+\hys) {};
    }
}

\title[]{Words with Repeated Letters in a Grid}

\author{Zachary Halberstam}
\address{Department of Pure Mathematics and Mathematical Statistics, University of Cambridge}
\email{zbh23@cam.ac.uk}
\author{Carl Schildkraut}
\address{Department of Mathematics, Stanford University}
\email{\texttt {carlsch@stanford.edu}}

\begin{abstract}
    Given a word $w$, what is the maximum possible number of appearances of $w$ reading contiguously along any of the directions in $\{-1, 0, 1\}^d \setminus \{\mathbf{0}\}$ in a large $d$-dimensional grid (as in a word search)? Patchell and Spiro first posed a version of this question, which Alon and Kravitz completely answered for a large class of ``well-behaved" words, including those with no repeated letters. We study the general case, which exhibits greater variety and is often more complicated (even for $d=1$). We also discuss some connections to other problems in combinatorics, including the storied $n$-queens problem.
\end{abstract}

\maketitle

\section{Introduction}\label{sec:intro}
In their article \cite{patchellspiro}, Patchell and Spiro pose and partially answer the following question: Suppose we have an $n$ by $n$ grid and a word $w$ of length $k$. We would like to place a letter in each square of the grid to maximize the number of copies of $w$ reading in the directions of a standard word search; that is, vertically, horizontally, or diagonally. What is the maximum number of copies of $w$ we can achieve? 

In \cite{alonkravitz}, Alon and Kravitz answer this question asymptotically in the case of words with distinct letters. They address the problem for the case of a grid with modular shape $\mathbb{Z}/n_1\mathbb{Z}\times\ZZ/n_2\ZZ$, noting that this makes no difference in the asymptotic as the $n_i$ grow large. They show that for such a word $w = w_1 \cdots w_\ell$ the (not necessarily unique) optimal construction is to let each row read
\begin{equation}\label{dim1distinct}
\cdots  w_2  w_1  w_2  \cdots   w_{\ell-1} w_\ell  w_{\ell-1} \cdots  w_2  w_1 w_2 \cdots
\end{equation}
stacked without any offset. They provide a similar result in arbitrary dimension, as well as addressing a narrow class of words with repeated letters. 

In this article, we further investigate this question for words with repeated letters. We will be especially interested in characterizing when, as in \cite[Theorem~1]{alonkravitz}, there exist optimal constructions in $d$ dimensions which are constant in all but one coordinate.

We first quickly introduce some terminology and notation; see \cref{sec:definition} for more precise definitions. The \emph{concentration} of a word $w$ in a $d$-dimensional grid $\Gamma$ of letters is the number of copies of $w$ in $\Gamma$, reading along one of the directions of a standard word search (that is, directions in which the increment in each coordinate is chosen from $\{-1,0,1\}$), divided by the size of $\Gamma$. Given a word $w$ and a dimension $d$, let $C_d(w)$ be the supremum concentration of $w$ among all $d$-dimensional grids. We restrict ourselves to words which contain some two distinct letters.

\begin{question}\label{qn:main} Given a word $w$ and a dimension $d$, what is $C_d(w)$?    
\end{question}

We are able to answer \cref{qn:main} in many regimes. 

\subsection{Introductory results}\label{subsec:intro-results}

We begin our discussion with the $d=1$ case, in which we are able to give a complete answer to \cref{qn:main}.

\begin{theorem}[Resolving the one-dimensional case]\label{thm:1d} Let $w$ be any word. There exists a closed form\footnote{We state this closed form in \cref{prop:1d} and the classification of extremal grids in \cref{prop:1d-structure}.} for $C_1(w)$ and a simple classification of all extremal grids. 
\end{theorem}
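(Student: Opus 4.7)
The plan is to reduce $C_1(w)$ to a combinatorial optimization over periodic sequences and then solve it explicitly using Fine--Wilf-type string combinatorics.

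First, I would reduce to the periodic case via a Fekete-style averaging argument: any finite one-dimensional grid of near-optimal concentration contains a short window whose periodic extension matches the concentration up to $o(1)$. This gives
\[
C_1(w)\;=\;\sup_{u}\,\frac{a(u)+b(u)}{|u|},
\]
where the supremum runs over all nonempty cyclic words $u$ and $a(u)$ (resp.\ $b(u)$) counts the rotations of $u^{\infty}$ beginning with $w$ (resp.\ $w^R$); without loss one may restrict to primitive $u$. Any two $w$-anchors (or any two $w^R$-anchors) in $u^{\infty}$ within distance $d<\ell$ force $w$ to have period $d$, so by Fine--Wilf $d$ must be a positive multiple of the minimum period $p$ of $w$. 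Consequently $a(u),b(u) \leq |u|/p$, yielding the a priori upper bound $C_1(w) \leq 2/p$.

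Next, I would identify two explicit candidate extremal constructions: (i) the $p$-periodic extension of $w$, which has density $1/p$ and attains the bound $2/p$ precisely when $w^R$ also occurs in this extension---equivalently, when the primitive $p$-blocks of $w$ and $w^R$ are cyclic conjugates; and (ii) a primitive cyclic word of minimum length $P^{\star}(w)$ whose infinite extension contains both $w$ and $w^R$ as substrings, which achieves density $2/P^{\star}(w)$. A case analysis on the finite admissibility graph on vertex set $\{w,w^R\}$---whose edges encode the admissible gaps between consecutive anchors, as constrained by Fine--Wilf and its palindromic analogue---shows that one of (i), (ii) is always optimal. Together these give the closed form
\[
C_1(w)\;=\;\max\!\left(\tfrac{1}{p},\ \tfrac{2}{P^{\star}(w)}\right),
\]
where both invariants $p$ and $P^{\star}(w)$ are explicitly computable from $w$'s autocorrelation and its cross-correlation with $w^R$. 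The classification of extremal grids follows: they are (up to translation and reflection) the periodic sequences built from the primitive blocks realizing this maximum.

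\textbf{Main obstacle.} The delicate step is ruling out ``exotic'' aperiodic or cleverly mixed configurations when $w$ and $w^R$ are not cyclic conjugates---one must verify that no subtle interleaving of $w$- and $w^R$-anchors beats the two constructions above. This is handled by the admissibility-graph analysis: one shows that the minimum mean-weight cycle in the two-vertex graph equals $1/C_1(w)$, which requires careful bookkeeping of the Fine--Wilf constraints and their palindromic counterparts (in particular, the interaction of $w$'s internal periodic structure with the overlap between $w$ and $w^R$).
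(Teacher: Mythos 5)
Your overall route---encode a grid by the cyclic sequence of forward and backward anchors, lower-bound the consecutive gaps, and compute a minimum mean-weight cycle in the two-vertex gap graph---is viable and genuinely different in flavor from the paper's argument, which instead proves a global additive statement (\cref{lem:difference-avoid-bound}) about difference sets avoiding intervals. Executed correctly, your consecutive-gap argument reproduces the paper's upper bound $\max\bigl(\tfrac{1}{\ell-c_{\mathrm{repeat}}},\tfrac{2}{(\ell-c_{\mathrm{left}})+(\ell-c_{\mathrm{right}})}\bigr)$: same-direction gaps are at least the minimal period $p=\ell-c_{\mathrm{repeat}}$, and the two opposite-direction gaps are at least $\ell-c_{\mathrm{right}}$ and $\ell-c_{\mathrm{left}}$ respectively, by the palindromic-overlap observation. (You would also need to dispose of degenerate cases: palindromes, where forward and backward anchors coincide, and grids shorter than $w$, where appearances wrap around; both are routine.) However, there are three genuine problems with the proposal as written.

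First, the string-combinatorial input is misquoted: it is false that every period $d<\ell$ of $w$ is a multiple of the minimal period $p$ (for example, $\mathsf{AABAA}$ has minimal period $3$ and also period $4$; Fine--Wilf only applies when $\ell\ge d_1+d_2-\gcd(d_1,d_2)$). You only need the trivial fact that every period is at least $p$, so the bound $a(u)\le |u|/p$ survives, but the admissibility graph as you describe it is mis-specified. Second, and most seriously, you never show that your upper and lower bounds meet. The inequality $C_1(w)\ge 2/P^{\star}(w)$ is true by definition of $P^{\star}$, but to match the gap-graph upper bound you must prove $P^{\star}(w)\le 2\ell-c_{\mathrm{left}}-c_{\mathrm{right}}$, i.e., actually exhibit a cyclic word of that length whose extension contains both $w$ and $w^{\mathrm{rev}}$. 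Gluing $w$ to $w^{\mathrm{rev}}$ along its maximal palindromic prefix and suffix is the right candidate, but verifying that this gluing is self-consistent is exactly the delicate point: when $c_{\mathrm{left}}$ and $c_{\mathrm{right}}$ are large, the cyclic word is much shorter than $\ell$, the copies of $w$ and $w^{\mathrm{rev}}$ wrap around it many times, and pairwise admissibility of gaps on the infinite line does not imply simultaneous cyclic realizability. This is precisely what the paper's \cref{lem:constr-1d-pal} proves via its recursive $a_n,b_n$ construction; declaring $P^{\star}$ ``explicitly computable from cross-correlation'' assumes this step rather than proving it. Indeed, your main-obstacle paragraph identifies the upper bound as the hard part, whereas with the gap-graph argument in hand the upper bound is the easy half and the construction is where the work lies. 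Third, your classification is incorrect in the boundary case $c_{\mathrm{left}}+c_{\mathrm{right}}=2c_{\mathrm{repeat}}$ (e.g., $w=\mathsf{CROC}$): there the self-loops and the two-cycle of the gap graph have equal mean weight, and the extremal grids are all grids $\Grid(v_1\cdots v_m)$ with each $v_i$ chosen from two distinct blocks $v,v'$---exponentially many inequivalent grids---rather than ``the periodic sequences built from the primitive blocks realizing this maximum.'' Mixed interleavings cannot be ruled out in this case; they are themselves extremal, as shown in \cref{prop:1d-structure}(c).
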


\noindent Given an understanding of the one-dimensional case, we provide simple bounds on $C_d(w)$ for all $d$ in terms of $C_1(w)$.

\begin{proposition}[General bounds on $C_d(w)$]\label{prop:simple-d-bound} For any dimension $d$ and any word $w$,
\[3^{d-1}C_1(w)\leq C_d(w)\leq\frac{3^d-1}2C_1(w).\]
\end{proposition}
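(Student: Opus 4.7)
The plan is to handle the two inequalities separately: the lower bound by extending a 1D witness into higher dimension, and the upper bound by slicing an arbitrary $d$-dimensional grid into 1D lines.

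For the lower bound $3^{d-1}C_1(w)\leq C_d(w)$, I would fix $\eps>0$, take a 1D grid $\Gamma_1$ whose concentration of $w$ is within $\eps$ of $C_1(w)$, and form the $d$-dimensional grid $\Gamma(x_1,\ldots,x_d)\defeq\Gamma_1(x_1)$, constant in all coordinates but the first. Any line in a direction $v=(v_1,\ldots,v_d)$ with $v_1=0$ is monochromatic, so (using that $w$ has two distinct letters) contributes no copies of $w$. A line in a direction $v$ with $v_1\in\{\pm1\}$ is an affinely reparametrized copy of $\Gamma_1$, and the number of copies of $w$ reading along it in direction $v$ equals the number of copies reading $\Gamma_1$ in direction $v_1$. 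There are $3^{d-1}$ directions with $v_1=+1$ and $3^{d-1}$ with $v_1=-1$; after multiplying by $|\Gamma_1|^{d-1}$ parallel lines per direction and normalising by $|\Gamma|=|\Gamma_1|^d$, the concentration of $\Gamma$ equals exactly $3^{d-1}$ times that of $\Gamma_1$. Sending $\eps\to0$ yields the bound.

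For the upper bound $C_d(w)\leq\tfrac{3^d-1}{2}C_1(w)$, let $\Gamma$ be any $d$-dimensional grid; for cleanness, I would work on a torus $(\ZZ/n\ZZ)^d$. Group the $3^d-1$ directions into $(3^d-1)/2$ pairs $\{v,-v\}$. For each such pair, partition $\Gamma$ into its disjoint lines parallel to $v$, whose lengths sum to $|\Gamma|$. Each line $L$, viewed as a 1D grid, has concentration at most $C_1(w)$ by definition, so the combined number of copies of $w$ reading along $L$ in directions $v$ and $-v$ is at most $C_1(w)\cdot|L|$. Summing over all lines parallel to $v$ gives at most $C_1(w)\,|\Gamma|$ copies of $w$ in directions $\pm v$, and summing over the $(3^d-1)/2$ pairs of directions delivers the advertised bound on concentration.

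The one mild technicality is matching boundary behaviour, since $C_1(w)$ is a supremum and each individual line in the partition is a finite (possibly short) 1D grid. Working on a torus makes every line parallel to $v\in\{-1,0,1\}^d\setminus\{\zero\}$ a cyclic 1D grid whose concentration is bounded above by $C_1(w)$ by definition, and the passage between modular and cube grids is the asymptotic equivalence already invoked from \cite{alonkravitz}. Beyond this bookkeeping of directions and lines, neither bound requires any deeper combinatorial input.
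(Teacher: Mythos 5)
Your proposal is correct and follows essentially the same route as the paper: the lower bound by stacking a near-optimal one-dimensional grid into a grid constant in the remaining coordinates (noting that directions with first coordinate $0$ yield monochromatic lines, hence no copies since $w$ has two distinct letters), and the upper bound by decomposing the grid into search lines paired by unsigned direction, each of total size $|\Gamma|$, contributing at most $C_1(w)|\Gamma|$ appearances. The only cosmetic difference is that the paper phrases the lower bound as an inductive lemma ($C_{d+1}(w)\geq 3C_d(w)$, iterated $d-1$ times) while you stack from one dimension to $d$ dimensions in a single step; the counting underneath is identical.
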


This proposition motivates the following definition. For a positive integer $d$, we call a word \emph{$d$-stackable} if the lower bound of \cref{prop:simple-d-bound} is tight, and \emph{$d$-slantable} if the upper bound is tight.\footnote{This terminology is inspired by the proof of \cref{prop:simple-d-bound}: the lower bound is tight if and only if the densest constructions in $d$ dimensions come from ``stacking'' the $1$-dimensional optimum, while the upper bound is tight if and only if, in every direction (``slant'') in which we count copies of $w$, the concentration of $w$ is the same as $C_1(w)$.} In our language, the main result of \cite{alonkravitz} describes a family of words which are $d$-stackable for every $d$, arising from the $d$-stackability of the single word $\A\B$. 

\subsection{Main results}\label{subsec:results} 

Our first class of main results concerns $d$-stackability.

\begin{theorem}\label{thm:stack} The notion of $d$-stackability enjoys the following properties:
\begin{enumerate}[label={(\alph*)}]
    \item\label{thm:odd-even} (Odd-even words) If $w$ is a word in which no letter appears at both an odd and even index, then $w$ is $d$-stackable for all $d$.
    
    \item\label{thm:AkBk} (Letter-repeated words) If $w$ is $d$-stackable and $k$ is a positive integer, then the word $w'$ formed by repeating each letter in $w$ $k$ times is $d$-stackable.
    
    \item\label{thm:short-words} (Short words) If $w$ is a word with at most four characters, then $w$ is $2$-stackable if and only if $w$ is not of the form $ABBB$ or its reverse for some letters $A$ and $B$.
\end{enumerate}
In fact, the non-$2$-stackability of $\mathsf{ABBB}$ is part of a more general phenomenon:
\begin{enumerate}[label={(\alph*)},resume]
    \item\label{prop:ABell-1} (The word $\A\B\cdots\B$) For $\ell>3$, the word $\A\B^{\ell-1}$ is not $2$-stackable.
\end{enumerate}
\end{theorem}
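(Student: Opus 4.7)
The plan is to exhibit, for each $\ell>3$, a finite $2$-dimensional toroidal configuration whose concentration of $\A\B^{\ell-1}$ strictly exceeds $6/\ell=3C_1(\A\B^{\ell-1})$. First I would verify that $C_1(\A\B^{\ell-1})=2/\ell$, attained by the period-$\ell$ line pattern with one $\A$ per block, and that stacking this pattern in two dimensions gives concentration exactly $6/\ell$: the two horizontal and four (anti)diagonal directions each contribute $1/\ell$, while the vertical contributes nothing.

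My main construction is a queens-like offset: let $L\ge \ell$ be the smallest integer with $\gcd(L,6)=1$, and place $\A$ at $(i,2i\bmod L)$ on the $L\times L$ torus. For any direction $(a,b)\in\{-1,0,1\}^2\setminus\{\zero\}$, the quantity $b-2a$ lies in $\{\pm1,\pm2,\pm3\}$ and is therefore coprime to $L$; hence the next $\A$ along that direction from any given $\A$ is at toroidal distance $L\ge\ell$, so every $\A$ anchors exactly $8$ copies of $\A\B^{\ell-1}$. The resulting concentration is $8/L$. Because consecutive integers coprime to $6$ differ by at most $3$, we have $L\le\ell+3$, which is strictly less than $4\ell/3$ whenever $\ell\ge 10$; so in that range $8/L>6/\ell$ automatically. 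The small cases $\ell\in\{4,5,6,7,9\}$ can be verified directly: $L=5,5,7,7,11$ respectively, with $8/L>6/\ell$ in each.

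The sole exception is $\ell=8$, where $L=11$ but $8/11<6/8$. For this I would instead place $\A$ at $(i,s_i)$ on the $8\times 8$ torus with $s_i=i(i+1)/2\bmod 8$. The identity $s_i-s_j=(i-j)(i+j+1)/2$, together with the $2$-adic observation that $|i-j|$ and $|i+j+1|$ are both at most $15<16$ for $i,j\in\{0,\ldots,7\}$, shows that $s$ is a permutation of $\ZZ/8\ZZ$. Consequently every $\A$ acquires both vertical and both horizontal copies. The auxiliary sequences $t_i=s_i-i$ and $u_i=s_i+i\bmod 8$ each turn out to have exactly one collision ($t_0=t_1$ and $u_6=u_7$), which destroys precisely $4$ of the $32$ possible diagonal/anti-diagonal copies. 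The total is $16+16+12+12=56$ copies in $64$ cells, for a concentration of $7/8>6/8$.

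The main difficulty is this exceptional case $\ell=8$: the arithmetic obstruction that the interval $[8,32/3)$ contains no integer coprime to $6$ prevents a single linear construction from applying uniformly, so a separate non-linear construction is needed. Fortunately, the triangular offset $i\mapsto i(i+1)/2\bmod 8$ is both a permutation of $\ZZ/8\ZZ$ and has only just-sparse-enough $\pm\mathrm{id}$-collisions to close the gap.
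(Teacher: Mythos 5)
Your proposal addresses only part (d) of the theorem: it constructs, for each $\ell>3$, a two-dimensional grid in which $\A\B^{\ell-1}$ has concentration exceeding $3C_1(\A\B^{\ell-1})=6/\ell$. Parts (a), (b), and (c) are not touched at all, and they are the larger share of the statement: (a) needs the projection-reduction machinery (reducing a general odd-even word to $\mathsf{AB}$ via a parity-respecting extremal grid), (b) needs the subgrid-decomposition argument for the letter-repeated word, and (c) additionally needs \emph{tight upper bounds} on $C_2(\mathsf{ABB})$ and $C_2(\mathsf{ABCC})$, which the paper obtains by a linear-programming/local-averaging method. Nothing in your queens-style constructions produces an upper bound on any $C_2(w)$ — they are all lower-bound constructions, which is exactly what non-stackability requires but the opposite of what stackability requires — so this gap cannot be closed by elaborating your approach. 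As a proof of the full statement, the proposal is therefore incomplete.

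For part (d) itself, your argument is correct and takes a genuinely different route from the paper. The paper invokes Monsky's theorem ($\ell-2$ nonattacking queens on the modular $\ell\times\ell$ board), which settles $\ell>8$, and then handles $\ell\in\{4,5,6,7,8\}$ via P\'olya's construction ($\ell=5,7$) and explicit figures ($\ell=4,6,8$). You instead run the P\'olya-type placement $(i,2i)$ on the smallest torus of side $L\ge\ell$ with $\gcd(L,6)=1$: every search line contains exactly one $\A$ (since $b-2a\in\{\pm1,\pm2,\pm3\}$ is invertible mod $L$), giving concentration $8/L$, and $L\le\ell+3<4\ell/3$ for $\ell\ge 10$, with $\ell\in\{4,5,6,7,9\}$ checked directly. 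This avoids citing Monsky entirely, at the cost of the genuine exception $\ell=8$ (you correctly observe $[8,32/3)$ contains no integer coprime to $6$), for which your triangular-number placement $(i,\,i(i+1)/2\bmod 8)$ works: it is a permutation of $\Zmod{8}$, has exactly one diagonal and one antidiagonal collision, and yields $16+16+12+12=56$ copies, i.e.\ concentration $7/8>3/4$, matching the paper's own $8\times 8$ figure. Two small slips, neither fatal: consecutive integers coprime to $6$ can differ by $4$ (e.g.\ $7$ and $11$), not $3$ — what you actually need, and what is true, is that any four consecutive integers contain one coprime to $6$, whence $L\le\ell+3$; and the two collisions destroy $8$ of the $32$ diagonal/antidiagonal copies ($4$ each), not $4$ in total — your own tally $12+12$ already reflects the correct count.
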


\cref{thm:odd-even} generalizes the main result of \cite{alonkravitz}, and uses the $d$-stackability of $\A\B$ (as proven in that work) as input. \cref{thm:short-words} uses \cref{thm:odd-even,thm:AkBk} as input, as well as a couple of other ``base cases,'' and is included to demonstrate the wide-ranging nature of our techniques in resolving \cref{qn:main} in the $d=2$ setting. \cref{prop:ABell-1} presents an infinite class of words which are not 2-stackable; in this sense, 2-stackability is not strictly the ``generic" behavior. 

Our second class of main results concerns $d$-slantability. In contrast with $d$-stackability, which can be achieved for all $d$, no word (excepting the trivial case of words with only one distinct letter) can be $d$-slantable for every $d$. 

\begin{theorem}\label{thm:slant} The notion of $d$-slantability enjoys the following properties:
\begin{enumerate}[label={(\alph*)}]
    \item\label{thm:no-slant} A word $w$ of length $\ell$ is not $d$-slantable for any $d\geq 8\log_2\ell+47$.

    \item\label{prop:queen-words} If $\ell>1$ and all prime factors of $\ell$ are at least $2^d$, then the word $w=\A\B^{\ell-1}$ is $d$-slantable.
\end{enumerate}
\end{theorem}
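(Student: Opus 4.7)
\textbf{Part (b)} admits an explicit algebraic construction. I would work on the toroidal grid $\Gamma = (\ZZ/\ell\ZZ)^d$, placing an $\A$ at each cell $\bx$ satisfying
\[ L(\bx) := \textstyle\sum_{i=1}^d 2^{i-1} x_i \equiv 0 \pmod{\ell}, \]
with $\B$ elsewhere. Since $L$ is surjective onto $\ZZ/\ell\ZZ$, the $\A$-density is $1/\ell$. Along any slant line $\{\bx_0 + t\bu\}$ with $\bu \in \{-1, 0, 1\}^d \setminus \{\zero\}$, the $\A$-positions are exactly the $t$ with $t L(\bu) \equiv -L(\bx_0) \pmod{\ell}$. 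Inspecting the highest nonzero index of $\bu$ shows $L(\bu) \ne 0$, and $|L(\bu)| \le 2^d - 1$; the hypothesis that every prime factor of $\ell$ is at least $2^d$ then forces $\gcd(L(\bu), \ell) = 1$. Hence $\A$'s are spaced exactly $\ell$ apart on each slant line, so each $\A$ starts a copy of $\A\B^{\ell-1}$ in each of the $3^d-1$ slant directions. Using $C_1(\A\B^{\ell-1}) = 2/\ell$, the resulting concentration is $(3^d-1)/\ell = \tfrac{3^d-1}{2} \cdot C_1(\A\B^{\ell-1})$, saturating the slantability bound from \cref{prop:simple-d-bound}.

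\textbf{Part (a)} calls for a rigidity-plus-impossibility argument. The first step is to leverage slantability to extract structural constraints on any extremizer. The proof of the upper bound in \cref{prop:simple-d-bound} proceeds by pairing each slant direction $\bu$ with $-\bu$ and using $C_\bu + C_{-\bu} \le C_1(w)$; equality throughout---i.e., slantability---forces every slant line, in every direction, to be an extremal 1D configuration for $w$. By the 1D classification \cref{thm:1d} (specifically the structure result \cref{prop:1d-structure}), these 1D extremals form a small family of periodic patterns of period $O(\ell)$. A $d$-slantable configuration is therefore periodic along each coordinate axis and is determined by values on a bounded fundamental domain, with its content on every slant line matching a permitted 1D extremal pattern.

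The second step, which I expect to be the main obstacle, is to show these line-extremality constraints cannot all be satisfied once $d \ge 8\log_2 \ell + 47$. I would approach this via a pigeonhole argument over slant directions: the count of extremal 1D patterns is limited, whereas the number of slant directions through a given cell is $(3^d-1)/2$. A well-chosen sub-family of directions---for example, the $2^{d-1}$ ``near-axis'' directions $(1, \eps_2, \ldots, \eps_d)$ with $\eps_i \in \{0, 1\}$---should contain, by pigeonhole, two directions $\bu, \bv$ whose lines through a common cell realize the same 1D pattern; the different increments $\bu$ vs.\ $\bv$ then force incompatible letter assignments on cells offset along $\bu - \bv$. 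The constants $8$ and $47$ should come out of a careful accounting of the pattern count and the collision setup. The difficulty is that a naive bound of $s^{O(\ell)}$ on the number of extremal patterns (with $s$ the alphabet size) only yields a linear-in-$\ell$ unslantability threshold, not the logarithmic one; extracting the logarithmic bound requires exploiting the compact structural description of extremal patterns given by \cref{prop:1d-structure} to compress each pattern to an $O(\log \ell)$-bit ``signature'' so that pigeonhole bites already at $d = O(\log \ell)$.
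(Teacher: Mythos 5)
Your part (b) is correct and is essentially the paper's own proof: the paper places the $\A$'s at the graph $\bigl(\sum_{i=1}^{d-1}2^i\alpha_i,\alpha_1,\ldots,\alpha_{d-1}\bigr)$ in $(\Zmod\ell)^d$, which is the same set (up to reparametrization) as your kernel $\{L(\bx)\equiv 0\}$, and the verification that $L(\bu)\neq 0$, $|L(\bu)|\leq 2^d-1$, hence $\gcd(L(\bu),\ell)=1$, is identical to the paper's computation.

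Part (a), however, has genuine gaps. First, $d$-slantability says only that the \emph{supremum} $C_d(w)$ equals $\frac{3^d-1}{2}C_1(w)$; it does not hand you a grid attaining it. Whether extremal grids exist in dimension $d\geq 2$ is precisely the paper's open \cref{qn:extremal-exist}, so your opening step (``equality throughout forces every slant line to be an extremal 1D configuration'') has no grid to which it can be applied. The paper works around this with a probabilistic extraction lemma (\cref{lem:slantable}): from grids with concentration within $\eps$ of the supremum, a random translate of a window of shape $(\Zmod n)^d$, $n=2^d-1$, can be chosen so that \emph{every} search line is merely \emph{near}-extremal, losing $O(d\ell/n)$ in concentration. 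Second, even granting an extremal grid, your claim that line-extremality forces periodicity of period $O(\ell)$ along each axis fails for words in case (c) of \cref{prop:1d-structure}: there the 1D extremal grids are arbitrary concatenations of two blocks $v,v'$, an exponentially rich and non-periodic family, so the grid need not reduce to a bounded fundamental domain. The paper's remark following \cref{lem:density-along-lines} flags exactly this failure mode as the reason the near-extremal machinery is needed.

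Third, your pigeonhole-over-directions step with $O(\log\ell)$-bit ``signatures'' is a hope rather than an argument, and it is not how the logarithmic threshold actually arises. The paper's mechanism is analytic: because the extracted lines are only near-extremal, one invokes the 1D \emph{stability} result (\cref{prop:1d-stable}, not the classification) to conclude that all search lines have nearly the same letter distribution $h_w$; then a Fourier second-moment argument (\cref{lem:density-along-lines}) shows that in any grid of shape $(\Zmod n)^d$ with $n<2^d$, some two search lines must have counts of a fixed letter differing by at least $\sqrt{\min(\alpha,1-\alpha)}\,n/3^{d/2}$. A pigeonhole on $2^d$ objects does occur, but in Fourier space: for each frequency $\xi$ the $2^d$ dot products $\xi\cdot\bu$, $\bu\in\{0,1\}^d$, must collide modulo $n<2^d$, so every $\xi$ is orthogonal to some search direction, which drives the variance lower bound. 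Comparing the variance lower bound against the $O(d\ell/n)$ stability upper bound with $n=2^d-1$ is what produces $d\geq 8\log_2\ell+47$.
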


\noindent By Bertrand's postulate, there exists for each $d$ some prime $\ell$ with $2^d<\ell<2^{d+1}$. Therefore, \cref{prop:queen-words} implies that \cref{thm:no-slant} is tight up to a constant factor.

\cref{prop:ABell-1,prop:queen-words} are closely related to the (modular) \emph{$n$-queens problem}; see \cref{sec:not-stack} for some discussion of the connection.

\section{Definitions, techniques, and organization}\label{sec:definition}

\subsection{Definitions and notation}\label{subsec:definition}

We begin by giving formal definitions to the concepts presented in the introduction, as well as some other definitions and notation we will need to fix for the rest of the paper.

\begin{itemize}
    \item An \emph{alphabet} is a set of letters. 
    \item A \emph{word} is a finite string of letters from some alphabet $\mathsf\Sigma$. The \emph{length} of $w$, denoted $\len(w)$, is the number of letters in $w$.
    \begin{itemize}
        \item Two words are \emph{isomorphic} if one can be obtained from the other by applying a bijection between the alphabets of the two words (only considering the letters of each alphabet which appear in the respective words) and then (possibly) reversing the words. For example, $\mathsf{AAB}$, $\mathsf{BEE}$, and $\mathsf{EEL}$ are all isomorphic to each other, while $\mathsf{ABA}$ and $\mathsf{AAB}$ are not.
        
        \item A \emph{subword} of $w$ is any string of consecutive letters in $w$. We call a subword of $w$ \emph{proper} if it is not $w$ itself, and we call a subword a \emph{prefix} (resp.\ \emph{suffix}) of $w$ if it starts (resp.\ ends) $w$.
        
        \item Given words $w$ and $v$ and a nonnegative integer $m$, we write $wv$ for the concatenation of $w$ and $v$, $w^n$ for the concatenation of $n$ copies of $w$ together, and $w^{\mathrm{rev}}$ for the reverse of $w$.

        \item We generally exclude the trivial case of a word with only one distinct letter.
    \end{itemize}

    \item A \emph{grid} $\Gamma$ is a function $G\to\mathsf\Sigma$ where $\mathsf\Sigma$ is some alphabet and $G=\prod_{i=1}^d\Zmod{n_i}$ for some sequence $(n_1,\ldots,n_d)$ of positive integers.\footnote{$G$ includes the data of the values of $n_i$ in order; it is not an abstract set or group.} We call $G$ the \emph{shape} of $\Gamma$, $|G|$ the \emph{size} of $\Gamma$ (which we sometimes denote $|\Gamma|$), and $d$ the \emph{dimension} of $\Gamma$. We think of a grid $\Gamma$ as a labeling of the cells of a toroidal $d$-dimensional ``box'' $[0,n_1)\times[0,n_2)\times\cdots\times[0,n_d)$ by elements of $\mathsf\Sigma$.
    \begin{itemize}
        \item Given a word $w$, $\Grid(w)$ denotes the one-dimensional grid of shape $\ZZ/\len(w)\ZZ$ defined by sending $0\leq i<\len(w)$ to the $i$th letter of $w$.
        
        \item An \emph{appearance} of $w$ in a grid $\Gamma$ of shape $G=\prod_{i=1}^d\Zmod{n_i}$ is a pair 
        \[(p,\bv)\in G\times(\{-1,0,1\}^d\setminus\{\zero\})\]
        such that the letters in $\Gamma$ at positions $(p+i\bv)_{i=0}^{\len(w)-1}$ are the letters of $w$, in order. That is, an appearance of $w$ is a copy of $w$ in $\Gamma$ reading in one of the standard ``word-search'' directions, extended to $d$ dimensions in a natural way. We call the number of appearances of $w$ in $\Gamma$ the \emph{count} of $w$ in $\Gamma$ and denote it by $\ct(w, \Gamma)$. We denote the set of appearances of $w$ in $\Gamma$ by $\mathcal A(w, \Gamma)$. 

        \item The \emph{concentration} of $w$ in a grid $\Gamma$ is the number of appearances of $w$ in $\Gamma$ divided by the size of $\Gamma$. We denote this by $c_d(w,\Gamma)$, where $d$ is the dimension. Note that $c_d(w,\Gamma)$ may exceed $1$.
        
        \item We may think of a grid $\Gamma$ of shape $\prod_{i=1}^d\Zmod{n_i}$ as arising from an ``infinite grid,'' the map $\ZZ^d\to\mathsf\Sigma$ given by precomposing $\Gamma$ with the quotient map $\ZZ^d\to\prod_{i=1}^d\Zmod{n_i}$. This is the \emph{infinite grid corresponding to} $\Gamma$. We call two grids $\Gamma$ and $\Gamma'$ \emph{equivalent} if they have the same dimension, and the infinite grids corresponding to $\Gamma$ and $\Gamma'$ can be taken to each other by precomposing with some isometry of $\ZZ^d$. Equivalently, two grids are equivalent if one can be transformed into the other by a sequence of the following operations:
        \begin{itemize}
            \item \emph{Enlarging} the grid in some coordinate $i$: precomposing the map $\Gamma\colon\prod_{i=1}^d\Zmod{n_i}\to\mathsf\Sigma$ with the quotient $\Zmod{kn_i}\to\Zmod{n_i}$ on the $i$th coordinate.
            \item \emph{Contracting} the grid in some coordinate $i$: the inverse of an enlarging map, if applicable.            \item \emph{Reversing} some coordinate $i$: precomposing $\Gamma$ by the negation map in some coordinate.            \item \emph{Translating} the grid: precomposing $\Gamma$ by the map $x\mapsto x+t$ for some $t\in\prod_{i=1}^d \Zmod{n_i}$.
            \item \emph{Swapping} two coordinates $i$ and $j$: precomposing on the $i$th and $j$th coordinates by the map $\Zmod{n_j}\times\Zmod{n_i}\to\Zmod{n_i}\times\Zmod{n_j}$ which interchanges the two factors.
        \end{itemize}
        If two grids $\Gamma$ and $\Gamma'$ of dimension $d$ are equivalent, then $c_d(w,\Gamma)=c_d(w,\Gamma')$ for every word $w$.

        \item Given a grid $\Gamma$ on alphabet $\mathsf\Sigma$ (or, more generally, any function $\Gamma$ to $\mathsf\Sigma$ with finite domain), the \emph{letter distribution} of $\Gamma$ is the function $h_\Gamma\colon\mathsf\Sigma\to[0,1]$ defined such that $h_\Gamma(A)$ is $\frac1{|\Gamma|}$ times the number of times $\Gamma$ takes the value $A$. We equip the space of functions on $\mathsf\Sigma$ with the \emph{total variation distance}, defined by
\[\TV{h_1-h_2}=\frac12\sum_{A\in\mathsf\Sigma}|h_1(A)-h_2(A)|.\]
        
        \item For each grid $\Gamma$ of shape $G$, with respect to each vector $\bv\in\{-1, 0, 1\}\setminus \{\zero\}$, $G$ can be partitioned into sets of the form $S = \{p+ n\bv: n\in \mathbb{N}\}$. We define the \emph{search lines} of $\Gamma$ in direction $\bv$ to be the one-dimensional grids $\Delta$ with shape $\Zmod{(|S|)}$ and $\Delta(t) = \Gamma(p+t\bv)$. Up to equivalence of one-dimensional grids, the ``starting point'' $p$ of a search line does not matter, nor does the choice of direction between $\bv$ and $-\bv$. The \emph{set of search lines} of $\Gamma$ consists of an arbitrary search line in each equivalence class. For any word $w$, the set of appearances of $w$ in $\Gamma$ is in bijection with the set of appearances of $w$ across the set of search lines in $\Gamma$.
        
        For example, in the grid $\Gamma$ of shape $\ZZ/2\ZZ\times\ZZ/3\ZZ$ given by
        \[\begin{matrix}
                \A & \B & \mathsf{C} \\
                \mathsf{D} & \mathsf{E} & \mathsf{F}
        \end{matrix}\]
        the set of search lines of $\Gamma$ is
        \begin{align*}
        \big\{\!&\Grid(\mathsf{AD}),\Grid(\mathsf{BE}),\Grid(\mathsf{CF}),\\
        &\Grid(\mathsf{ABC}),\Grid(\mathsf{DEF}),\\
        &\Grid(\mathsf{AECDBF}),\\
        &\Grid(\mathsf{AFBDCE})\big\}.
        \end{align*}
    \end{itemize}
\end{itemize}

The central object of study, $C_d(w)$, is defined by
\begin{equation}\label{eq:C-def}
C_d(w)=\sup_{\substack{\Gamma\text{ a grid}\\\dim\Gamma=d}}c_d(w, \Gamma).
\end{equation}
A grid $\Gamma$ is called $w$-\emph{extremal} (or simply \emph{extremal} when $w$ is clear) if $c_d(w, \Gamma)=C_d(w)$. Note that, if $w$ and $w'$ are isomorphic, then $C_d(w) = C_d(w')$ for all $d$.

In our notation, the main result of \cite{alonkravitz} can be stated as follows:

\begin{theorem}[{\cite[Theorem 3]{alonkravitz}}]\label{thm:AK-restated}
Let $w$ be a word of length $k$ with no letter appearing at both even and odd indices and no contiguous two-letter block appearing more than once in the word, either forward or backwards, and let $d$ be a positive integer. Then $w$ is $d$-stackable, and
\[C_d(w) = \frac{3^{d-1}}{k-1}.\]
\end{theorem}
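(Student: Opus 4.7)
My plan is to derive this theorem from two ingredients: the one-dimensional analysis of \cref{thm:1d}, which will show $C_1(w) = 1/(k-1)$, and the general stackability result \cref{thm:odd-even}.

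For the one-dimensional analysis, the lower bound $C_1(w) \geq 1/(k-1)$ follows from the ``bouncing'' grid $\Grid(w_1 w_2 \cdots w_k w_{k-1} \cdots w_2)$ of length $2(k-1)$, which contains two appearances of $w$ per period (one forward, one backward, using periodicity) for concentration $1/(k-1)$. For the matching upper bound, I would use a digraph-counting argument: each appearance of $w$ in a one-dimensional grid $\Delta$ covers $k-1$ consecutive ordered cell pairs, each carrying a two-letter block of $w$. The hypothesis that no two-letter block appears more than once in $w$ (whether read forward or backward) ensures that distinct appearances in $\Delta$ cannot share any ordered cell pair: such a shared pair would, by block-uniqueness, force both appearances to agree on their position in $w$ and their direction. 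Consequently $\ct(w,\Delta)\cdot(k-1)\leq|\Delta|$, and $c_1(w,\Delta)\leq 1/(k-1)$.

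Having established $C_1(w) = 1/(k-1)$, the theorem follows immediately from \cref{thm:odd-even}: the odd-even hypothesis of the theorem is precisely that of \cref{thm:odd-even}, so $w$ is $d$-stackable, yielding $C_d(w) = 3^{d-1} C_1(w) = 3^{d-1}/(k-1)$.

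The main obstacle is therefore \cref{thm:odd-even} itself, which is a genuinely $d$-dimensional statement. An alternative route that sidesteps it and recovers the present theorem directly would use Fourier analysis on the $d$-dimensional ``king'' Cayley graph. The odd-even hypothesis partitions the letters of $w$ into disjoint parity classes $O$ (odd-indexed) and $E$ (even-indexed), and consecutive letters of $w$ lie in opposite classes. Defining a parity function $\phi\colon\Gamma\to\{0,1,*\}$, one sees that each appearance of $w$ uses $k-1$ king-graph edges with endpoints of opposite parity; combined with the 1D digraph-counting bound along each search line, this gives $\ct(w,\Gamma)\leq\frac{1}{k-1}\sum_{[\bv]}U_{[\bv]}$, where $U_{[\bv]}$ counts parity-changing edges in direction class $[\bv]$. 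The key inequality $\sum_{[\bv]}U_{[\bv]}\leq 3^{d-1}|\Gamma|$---bounding the maximum $O/E$-bipartite cut of the king Cayley graph---would follow via Plancherel from the observation that $\min_\xi\prod_{i=1}^d(1+2\cos(2\pi\xi_i/n_i))=-3^{d-1}$, with equality saturated by the stacking construction.
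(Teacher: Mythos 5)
Your proposal is essentially correct, but you should be aware of how this statement sits in the paper: \cref{thm:AK-restated} is not proved in the paper at all --- it is quoted from Alon and Kravitz, and the paper \emph{uses} it (specifically its $k=2$ case, the $d$-stackability of $\mathsf{AB}=\mathsf{OE}$) as an input to its own results. In particular, your first route is circular within this paper's logical structure: the proof of \cref{thm:odd-even} (via \cref{lem:OE-ratio} and \cref{prop:proj-reductions}) concludes precisely by invoking the $d$-stackability of $\mathsf{OE}$ from \cref{thm:AK-restated}. So one cannot derive \cref{thm:AK-restated} from \cref{thm:odd-even} without independently establishing the $k=2$ case --- which is exactly the Fourier-analytic core of Alon--Kravitz. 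Your instinct that \cref{thm:odd-even} is ``the main obstacle'' is right, but the dependency runs the other way: in this paper, the present theorem is the input and \cref{thm:odd-even} is the generalization built on top of it.

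Your second route is the real proof, and it is essentially Alon and Kravitz's original argument, which the paper only sketches in \cref{subsec:intro-reductions}: block-uniqueness forces distinct appearances of $w$ to occupy pairwise disjoint sets of parity-changing king-graph edges, each appearance consuming $k-1$ of them; the spectral bound on the bipartite cut of the king Cayley graph bounds the supply of such edges by $3^{d-1}|\Gamma|$ (for the upper bound you only need $\prod_{i=1}^d\bigl(1+2\cos(2\pi\xi_i/n_i)\bigr)\geq-3^{d-1}$ for every $\xi$, so the smallest adjacency eigenvalue is at least $-3^{d-1}-1$; the exact value $-3^{d-1}$ is an infimum attained only for suitable shapes, but this does not affect the bound); and the stacked bouncing construction matches. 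Your one-dimensional computation $C_1(w)=1/(k-1)$ is also correct and consistent with \cref{prop:1d}: under the hypotheses one checks $c_{\mathrm{left}}=c_{\mathrm{right}}=1$ and $c_{\mathrm{repeat}}\leq 1$, since a longer palindromic prefix or repeated prefix/suffix would force a repeated two-letter block. So, fleshed out, route 2 is a faithful reconstruction of the cited proof, while route 1 should be dropped or reframed as a reduction of the general case to the case $k=2$.
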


\subsection{Higher dimensions from one dimension}\label{subsec:bounds-from-1d}

As stated in \cref{prop:simple-d-bound}, bounds on $C_d(w)$ can be obtained from $C_1(w)$. Now that we have defined all of our objects, we can provide a proof of this simple proposition.

First, we prove a slightly stronger lower bound result about density growth in dimension:

\begin{lemma}\label{lem:concentration-growth-lower-bound}
    For any dimension $d$ and any word $w$, $C_{d+1}(w)\ge 3C_d(w)$.
\end{lemma}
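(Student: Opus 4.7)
The plan is to produce, from any $d$-dimensional grid $\Gamma$ of shape $G$, a $(d+1)$-dimensional grid $\Gamma'$ of the same size with $\ct(w,\Gamma')=3\ct(w,\Gamma)$. My construction would be the simplest possible: take $\Gamma'$ of shape $G\times\Zmod 1$, defined by $\Gamma'(p,0)=\Gamma(p)$. In other words, I just add a trivial new coordinate.

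The key observation I would verify is that each appearance $(p,\bv)\in\mathcal A(w,\Gamma)$ gives rise to exactly three appearances of $w$ in $\Gamma'$: for each $v'\in\{-1,0,1\}$, the pair $((p,0),(\bv,v'))$ is an appearance in $\Gamma'$, since its $i$th visited cell is $(p+i\bv,\,iv'\bmod 1)=(p+i\bv,0)$, which carries the $i$th letter of $w$. These three appearances are distinct because the formal direction vector $(\bv,v')\in\{-1,0,1\}^{d+1}\setminus\{\zero\}$ is recorded as part of the data of an appearance. Conversely, every appearance of $w$ in $\Gamma'$ whose spatial part is nonzero arises in this way, and the only remaining possible directions $(\zero,\pm 1)$ would require all letters of $w$ to coincide, which is excluded by our standing convention that $w$ has two distinct letters.

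Combining these gives $\ct(w,\Gamma')=3\ct(w,\Gamma)$ and $|\Gamma'|=|\Gamma|$, hence $c_{d+1}(w,\Gamma')=3c_d(w,\Gamma)$. Passing to the supremum over $\Gamma$ then yields $C_{d+1}(w)\geq 3C_d(w)$. There is no serious obstacle here; the only subtlety worth flagging is the bookkeeping around the ``degenerate'' new coordinate $\Zmod 1$ --- the factor of $3$ comes from counting formal direction vectors rather than from any geometric translation in the new direction, and this is precisely what allows the trivial stacking construction to triple the count.
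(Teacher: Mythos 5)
Your proposal is correct and is essentially identical to the paper's own proof: both stack $\Gamma$ into a grid of shape $G\times\Zmod{1}$, observe that each appearance $(p,\bv)$ yields three appearances with directions $(\bv,v')$ for $v'\in\{-1,0,1\}$, and rule out the directions $(\zero,\pm 1)$ using the standing assumption that $w$ has two distinct letters. The concluding supremum argument is also the same.
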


\begin{proof}
    Let $\Gamma$ be any $d$-dimensional grid, of shape $\prod_{i=1}^d \Zmod{n_i}$. Then, consider the $d+1$-dimensional grid $\Gamma'$ of shape $(\prod_{i=1}^d \Zmod{n_i}) \times \Zmod{1}$, defined such that $\Gamma'(t_1, \ldots, t_d, 0) = \Gamma(t_1, \ldots, t_d)$. That is, the letters in $\Gamma'$ are exactly the same as the letters in $\Gamma$, but $\Gamma'$ has an extra coordinate dimension. We are ``stacking" $\Gamma$ to get a grid constant in the $(d+1)$th coordinate.

    Now, suppose there is an appearance of $w$ at position $p$ in $\Gamma$, reading in the direction $\bv$. Then, there are three corresponding appearances of $w$ at position $p\times 0$ in $\Gamma'$, reading in the directions $\bv \times \{-1, 0, 1\}$. In fact every appearance of $w$ in $\Gamma'$ is of this form, since $w$ has more than one unique letter (recall that we exclude the trivial case where $w$ has only one distinct letter) and $\Gamma'$ is constant in the $(d+1)$th coordinate.

    Therefore $\ct(w, \Gamma') = 3\ct(w, \Gamma)$. Moreover $|\Gamma'| = |\Gamma|$, so $c_{d+1}(w, \Gamma') = 3c_d(w, \Gamma)$. Taking the supremum over $\Gamma$, this implies that $C_{d+1}(w)  \ge 3C_d(w)$ as desired.
\end{proof}

\begin{proof}[Proof of \cref{prop:simple-d-bound}]
The lower bound follows from applying \cref{lem:concentration-growth-lower-bound} $d-1$ times to get
\[C_d(w) \ge 3C_{d-1}(w)\geq 3^2C_{d-2}(w)\geq\cdots\geq3^{d-1} C_1(w).\]
To prove the upper bound, we look within each search line. Let $\Gamma$ be any grid in $d$ dimensions. The total size of all search lines reading in each unsigned direction is $|\Gamma|$. Therefore, the total number of appearances of $w$ across all search lines in each unsigned direction is at most $C_1(w)|\Gamma|$. Since there are $\frac{3^d-1}{2}$ unsigned directions, the total number of appearances of $w$ across the set of search lines is then at most $C_1(w)\frac{3^d-1}{2}|\Gamma|$. Recalling that the appearances of $w$ across the set of search lines of $\Gamma$ are in bijection with the appearances of $w$ in $\Gamma$ shows that $c_d(w, \Gamma) \le \frac{3^d-1}{2}C_1(w)$, which gives the desired upper bound.
\end{proof}

We will state many results in terms of how $C_d(w)$ and $C_1(w)$ relate. An important narrowing of \cref{qn:main} is the following:

\begin{question}\label{qn:d-stackable}
For which $w$ and $d$ does it hold that $w$ is $d$-stackable?
\end{question}

Note in particular that the answer to this question is affirmative in every dimension for every word for which \cref{thm:AK-restated} of \cite{alonkravitz} applies.

\vspace{2mm}

In proving our results, we introduce and develop a number of techniques relevant to \cref{qn:main}. It is in terms of these techniques that this paper is organized. We give a short description of each below.

\subsection{Additive formulation}\label{subsec:intro-additive} Our first technique is to formulate \cref{qn:main} as a question of set addition.

Let $\Gamma$ be a $d$-dimensional grid of shape $G$, and let $w$ be a word. For each $\bv\in\{-1,0,1\}^d\setminus\{\zero\}$, let $S_\bv\subset G$ denote the set of $p\in G$ for which $(p,\bv)$ is an appearance of $w$, i.e.~such that $\Gamma(p+i\bv)$ is the $i$th letter $w_i$ of $w$ for all $0\leq i<\len(w)$. For each pair $(\bu,\bv)$ of directions, define the sets
\[S_\bu-S_\bv:=\{\bs-\bt : \bs\in S_\bu,\bt\in S_\bv\};\qquad I_{\bu,\bv}:=\big\{i\bv-j\bu : 0\leq i,j<\len(w),w_i\neq w_j\big\}.\]
We have
\begin{equation}\label{eq:additive-avoidance}
(S_\bu-S_\bv)\cap I_{\bu,\bv}=\emptyset\text{ for all }\bu,\bv\in\{-1,0,1\}^d\setminus\{\zero\}.
\end{equation}
Indeed, for any solution to $\bs-\bt=i\bv - j\bu$ with $\bs\in S_\bu$, $\bt\in S_\bv$, and $0\leq i,j<\len(w)$, the letter at $\bs+j\bu=\bt+i\bv$ in $\Gamma$ must be both $w_j$ and $w_i$, and so $w_i=w_j$. Conversely, consider some sets $S_\bv\subset G$ for $\bv\in\{-1,0,1\}^d\setminus\{\zero\}\}$. As long as \eqref{eq:additive-avoidance} holds, there exists a grid $\Gamma$ for which, for all $\bv$, $(p,\bv)$ is an appearance of $w$ for all $p\in S_\bv$. To find such a grid, simply place letter $w_i$ at $p+i\bv$ for all $p\in S_\bv$, and once this is done fill in the remaining cells arbitrarily. Condition \eqref{eq:additive-avoidance} ensures that no two distinct letters are placed in the same cell. Finally, we have
\[c_d(w,\Gamma)=\sum_\bv\frac{|S_\bv|}{|G|}.\]
So, we can reformulate \cref{qn:main} as follows:
\begin{question}[{\cref{qn:main}}, additive formulation]\label{qn:main-additive} Given a word $w$ and a dimension $d$, define for each $\bu,\bv\in\{-1,0,1\}^d\setminus\{\zero\}$
\[I_{\bu,\bv}:=\big\{i\bv-j\bu : 0\leq i,j<\len(w),w_i\neq w_j\big\}.\]
Let $\mathcal A_d$ denote the set of finite rank-$d$ abelian groups. Determine
\[\sup_{G\in\mathcal A_d}\max_{\substack{S_\bv\subset G\\(S_\bu-S_\bv)\cap I_{\bu,\bv}=\emptyset}}\sum_\bv\frac{|S_\bv|}{|G|},\]
where the conditions in the maximum are over all $\bu,\bv\in\{-1,0,1\}^d\setminus\{\zero\}$.
\end{question}

\noindent In the $d=1$ case, this reformulation is particularly simple. We will use this in \cref{sec:1d} to prove \cref{thm:1d}.

We remark that, in the additive formulation, our main question is reminiscent of the \emph{periodic tiling conjecture}: given a finite set $T\subset\ZZ^d$ which tiles $\ZZ^d$ by translations, must it tile $\ZZ^d$ periodically? For a tile $T$, the set $S\subset\ZZ^d$ of translations must satisfy $S+T=\ZZ^d$; in particular, $S$ has ``density'' $1/|T|$, and $S-S$ avoids $(T-T)\setminus\{\zero\}$. This is of a similar shape to \cref{qn:main-additive}; the analogous question is the following:

\begin{question}\label{qn:extremal-exist} Given a word $w$ and a dimension $d$, does there exist a $w$-extremal grid? In other words, is the supremum in \eqref{eq:C-def} defining $C_d(w)$ attained?
\end{question}

We are able to answer \cref{qn:extremal-exist} in the affirmative when $d=1$; see \cref{cor:1d-extremal-exist}. Given the recent disproof of the periodic tiling conjecture by Greenfeld and Tao in \cite{greenfeldtao}, however, it is not unreasonable to propose that the answer may be negative for large $d$.

\subsection{Combinatorial reductions}\label{subsec:intro-reductions} In many cases, it is possible to extend results about the supremum density for one word to results for many more words. For instance, the crux of the argument in \cite{alonkravitz} is the proof (using Fourier analysis) that $C_d(\mathsf{AB}) = 3^{d-1}$, and that there exist $w$-extremal grids for all $d$ which are constant in all but one coordinate and alternate in this coordinate between $\A$ and $\B$. The proof of \cref{thm:AK-restated} then follows, essentially, from mapping longer words onto the word $\mathsf{AB}$.

The argument is sketched as follows. First, the upper bound on the concentration of the word $\mathsf{AB}$ yields the same upper bound on the concentration of consecutive odd letter-even letter pairs from $w$ in any grid. 
Next, a grid constant in all but one coordinate reading as in \eqref{dim1distinct} achieves this upper bound on the concentration of consecutive odd letter-even letter pairs. Moreover, since each odd-even pair can only be used in one appearance of $w$ (by the restrictions on $w$ specified in \cref{thm:AK-restated}), each appearance of $w$ requires $\len(w) - 1$ odd-even pairs, and this labeling achieves one appearance of $w$ per every $\len(w)-1$ odd-even pairs, Alon and Kravitz conclude \cref{thm:AK-restated}.

Alon and Kravitz suggest related results may be possible for all words with no letters appearing at both even and odd indices, removing the restriction on odd letter-even letter pairs that appear in more than one word. One example of such a word is $\mathsf{ELEPHANT}$, where the pair $\mathsf{EL}$ appears both forwards and backwards.

We will develop a combinatorial framework for reducing cases of \cref{qn:main} for one word $w$ to other words for which the answer is already known. This framework will allow us to show \cref{thm:odd-even}, that words with no letters appearing at both even and odd indices are $d$-stackable for every $d$, as well as many other cases of \cref{thm:short-words}.

\subsection{Local analysis and linear programming}\label{subsec:intro-local}

If one restricts oneself to grids of a fixed shape, maximizing the concentration of $w$ is a finite problem, since there are finitely many grids of a given shape on a given alphabet. In principle, this fact can be used to approximate $C_d(w)$ for any word $w$ and any $d$: one can simply compute the optimal concentration of $w$ in a grid of shape $(\ZZ/n\ZZ)^d$ for successively larger $n$.

This procedure can be generalized to provide upper bounds on $c_d(w,\Gamma)$ independent of the shape of $\Gamma$. We cannot directly use bounds on $c_d(w,\Gamma)$ for small $\Gamma$ to bound $C_d(\Gamma)$ (at least in a tight way), since the geometry of small grids contains the ``wraparound'' of $\Zmod{n}$. So, small modular grids do not neatly fit into larger grids. However, we can restrict focus to a set $S$ of cells in a grid without wraparound (in essence, a set of cells within the infinite grid $\ZZ^d$), compute the maximum number of appearances of $w$ within these cells, and then average this maximum number over all isometric images of $S$ in a grid $\Gamma$. For technical reasons (and to obtain better bounds), we will give each of the possible appearances of $w$ within $S$ a different weight. See \cref{subsec:local-AB} for a motivating example.

Unlike the methods described in previous sections, this ``local'' method has many parameters to fine-tune, and choosing optimal values for these parameters is a computationally time-consuming procedure. If the set $S$ chosen is too small, the bounds we attain on $C_d(w)$ will often be weak or trivial. However, if the set $S$ is too large, carrying out the computation (even with fixed parameter values) to bound $C_d(w)$ is too costly. We explain some optimizations we have applied in \cref{subsec:local-philosophy}. There are only a few pairs $(w,d)$ not covered by our previous techniques for which we have been able to attain tight upper bounds on $C_d(w)$ using the local method. However, among these pairs is $(\mathsf{ABB},2)$, which is a crucial case of \cref{thm:short-words} and one we do not know how to treat using other methods.

Although our linear programming approach is not sourced from any particular reference, we remark that the use of linear (or, more generally, semidefinite) programming to study extremal problems is not new. The most developed example of this is the study of flag algebras \cite{Razborov}, a technique based in semidefinite programming which has been instrumental in understanding the relationships between subgraph densities in graphs. Linear programming and the study of finite substructures have also been used fruitfully in understanding geometric problems such as the density of sets without unit distances \cite{Szekely,ACMVZ} and of spherical sets without pairs of orthogonal points \cite{decortepikhurko,bkov}, as well as in additive combinatorics \cite{lmpv}. Our technique thus considered follows in a long line of work, and from this perspective the utility of such an approach should not be surprising. What may be surprising, however, is that we are able to obtain \emph{tight} bounds in some cases with our linear programming approach. In many of the referenced examples, such bounds seem out of reach with purely local techniques.

\subsection{Fourier analysis}\label{subsec:intro-fourier}

Given that \cref{qn:main} can be phrased purely in terms of additive combinatorics, it is perhaps not surprising that Fourier analysis is a useful tool.

In the language of spectral graph theory, Alon and Kravitz applied Fourier analysis to prove \cref{thm:AK-restated}, in particular to prove that $\mathsf{AB}$ is $d$-stackable for all $d$. In this paper, we use Fourier analysis when the question we aim to answer has some nice algebraic structure, or when a nice algebraic structure can be isolated within it. 

One result we prove along these lines, which is essentially the only tight result related to \cref{qn:main} not treated in \cite{alonkravitz} which we know how to obtain using Fourier analysis, is the following:

\begin{proposition}\label{thm:ABB-Zmod3} In any grid $\Gamma$ of shape $(\ZZ/3\ZZ)^d$, $\mathsf{ABB}$ has concentration at most $C_1(\mathsf{ABB})3^{d-1}=2\cdot 3^{d-2}$.    
\end{proposition}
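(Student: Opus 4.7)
The plan is to apply discrete Fourier analysis on $G=(\ZZ/3\ZZ)^d$, viewed as a vector space over $\mathbb F_3$. The key structural observation is that $3\bv=\zero$ in $G$, so any appearance of $\mathsf{ABB}$ at position $p$ in direction $\bv$ consists of the three cells $p,\;p+\bv,\;p+2\bv=p-\bv$, whose positions sum to $3p=\zero$. Letting $a,b\colon G\to\{0,1\}$ be the indicators of the letters $\mathsf A$ and $\mathsf B$ in $\Gamma$, the count of $\mathsf{ABB}$-appearances equals
\[T(a,b,b) \;:=\!\!\sum_{x+y+z=\zero}\!\!a(x)\,b(y)\,b(z) \;=\;\frac{1}{N}\sum_\chi \hat a(\chi)\,\hat b(\chi)^2,\]
where $N=3^d$ and $\hat f(\chi)=\sum_x f(x)\omega^{-\chi\cdot x}$ with $\omega=e^{2\pi i/3}$.

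We first reduce to the case $a+b\equiv1$ on $G$: relabeling every non-$\mathsf A$, non-$\mathsf B$ cell to $\mathsf B$ only enlarges $b$ pointwise, and expanding $T(a,b+c,b+c)=T(a,b,b)+2T(a,b,c)+T(a,c,c)$ shows this can only increase the appearance count. Setting $\alpha=|A|/N$ and $\beta=|B|/N$, the $\chi=\zero$ Fourier term contributes the ``main'' value $\alpha\beta^2 N^2$; for $\chi\ne\zero$ the identity $a+b\equiv1$ forces $\hat a(\chi)=-\hat b(\chi)$, so the residual contribution equals $-\tfrac{1}{N}\sum_{\chi\ne\zero}\hat b(\chi)^3$. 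Bounding this in absolute value and applying Parseval's relation $\sum_{\chi\ne\zero}|\hat b(\chi)|^2=\alpha\beta N^2$ yields
\[T(a,b,b) \;\leq\; \alpha\beta^2 N^2 + M\alpha\beta N,\qquad M:=\max_{\chi\ne\zero}|\hat b(\chi)|.\]

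The central and most difficult step is to establish a sharp pointwise bound on $M$. For each nonzero character $\chi$, the sets $B_c:=B\cap\{x:\chi\cdot x=c\}$ partition $B$ across three affine hyperplanes, so $|B_c|\in[0,N/3]$ and $\sum_c|B_c|=|B|$; a short computation with cube roots of unity gives
\[|\hat b(\chi)|^2 \;=\; \tfrac12\Bigl(3\textstyle\sum_c|B_c|^2 - |B|^2\Bigr),\]
and maximizing the right-hand side over the feasible coset-size vectors yields $|\hat b(\chi)|^2 \leq N^2(\beta^2-\beta+\tfrac13)$ in the critical range $\beta\in[\tfrac13,\tfrac23]$, with the cruder bound $M\leq\min(\alpha,\beta)N$ sufficing outside this range.

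Substituting back into the Fourier inequality reduces the theorem to the single-variable check
\[(1-\beta)\,\beta\!\left(\beta+\sqrt{\beta^2-\beta+\tfrac13}\right) \;\leq\; \tfrac{2}{9}\qquad\text{for }\beta\in[\tfrac13,\tfrac23],\]
whose derivative vanishes at $\beta=\tfrac23$ with value exactly $\tfrac29$. Equality at $\beta=\tfrac23$ corresponds to the hyperplane-complement extremum, where $B$ is the union of the two nonzero cosets of a codimension-one subspace and $\hat b$ is supported only at $\zero$ and at the two nonzero scalar multiples of a single character $\chi_0$ with $\hat b(\pm\chi_0)=-N/3$. The complementary ranges of $\beta$ give strictly smaller upper bounds, producing the claimed concentration bound $T(a,b,b)/N \leq 2\cdot 3^{d-2}$.
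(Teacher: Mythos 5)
Your proof is correct, and it shares the skeleton of the paper's argument: both identify $\ct(\mathsf{ABB},\Gamma)$ with a trilinear sum over zero-sum triples (valid precisely because in $(\ZZ/3\ZZ)^d$ the word-search directions are \emph{all} nonzero group elements), split off the main term $\alpha\beta^2N^2$, use $\hat a(\chi)=-\hat b(\chi)$ for $\chi\neq\zero$ to write the remainder as $\pm\tfrac1N\sum_{\chi\neq\zero}\hat b(\chi)^3$, and control that remainder by an $L^\infty$ bound on the nonzero Fourier coefficients combined with Parseval. Where you genuinely diverge is in the key pointwise estimate. The paper (\cref{lem:n3ABB}, via \cref{lem:real-part-cube}) never bounds $|\hat b(\chi)|$ itself: it bounds $\Re\big(\hat f(\xi)^3\big)\leq\tfrac18(1-\alpha)^3$ using only the one-sided constraints $\Re\big(\omega^j\hat f(\xi)\big)\leq\tfrac{1-\alpha}2$ (equivalent to the density of the letter on each affine hyperplane $\{\xi\cdot x=j\}$ being at most $1$), fed through the rotation identity $\Re(z^3)=4\gamma_0\gamma_1\gamma_2$. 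You instead bound the modulus sharply, maximizing $|\hat b(\chi)|^2=\tfrac12\big(3\sum_c|B_c|^2-|B|^2\big)$ over the polytope $|B_c|\in[0,N/3]$, $\sum_c|B_c|=|B|$, by convexity. Both estimates exploit exactly the same hyperplane-slicing constraint, but the paper uses it one-sidedly on real parts while you use it on moduli. The trade-off: the paper's route yields the clean uniform bound $\tfrac32\alpha(1-\alpha)^2$ with a one-line optimization over $\alpha$ and no case split, whereas your modulus bound forces the split into $\beta\in[\tfrac13,\tfrac23]$ versus the crude regime and terminates in a square-root inequality; in exchange, your estimate is mechanical (vertices of a polytope) and avoids the somewhat magical \cref{lem:real-part-cube}.

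Two asserted steps must actually be carried out. First, the inequality $(1-\beta)\beta\big(\beta+\sqrt{\beta^2-\beta+\tfrac13}\big)\leq\tfrac29$ on $[\tfrac13,\tfrac23]$ does not follow merely from the derivative vanishing at $\beta=\tfrac23$ with value $\tfrac29$ there; one must rule out a larger interior value. It is true: since $\tfrac29-\beta^2(1-\beta)>0$ on the interval, squaring and clearing denominators reduces it to $81\beta^5-189\beta^4+171\beta^3-63\beta^2+4\geq0$, which factors as $(3\beta-2)^2\big(9\beta^3-9\beta^2+3\beta+1\big)\geq0$, and the cubic factor is positive on $[\tfrac13,\tfrac23]$ because its derivative is $3(3\beta-1)^2\geq0$ and its value at $\beta=\tfrac13$ is $\tfrac43$. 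Second, the complementary ranges: for $\beta>\tfrac23$ the crude bound $M\leq\alpha N$ gives $T\leq\alpha\beta(\alpha+\beta)N^2=\alpha(1-\alpha)N^2<\tfrac29N^2$, and for $\beta<\tfrac13$ the bound $M\leq\beta N$ gives $T\leq2(1-\beta)\beta^2N^2\leq\tfrac4{27}N^2<\tfrac29N^2$. With these one-line verifications filled in, your argument is complete and delivers the stated bound, including the correct equality analysis at $\beta=\tfrac23$.
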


Our interest in this statement, despite the unpleasant restriction on grid shape, is motivated by the difficulty of understanding $C_d(\mathsf{ABB})$ using other methods. \cref{thm:ABB-Zmod3} is amenable to Fourier analysis because the directions in which we count copies of $w$ are \emph{every} direction in $(\ZZ/3\ZZ)^d$, so the function $c_d(w, \Gamma)$ can be expressed cleanly as a convolution involving the indicator function of copies of $\A$. The restrictive setting of \cref{thm:ABB-Zmod3} underscores the difficulty of using Fourier analysis in general cases.

We also use Fourier analysis, combined with the understanding of the one-dimensional case provided by \cref{thm:1d}, to prove \cref{thm:no-slant}. Here, we isolate a single letter and use Fourier analysis to show that, in high dimension, its frequency within a random line in which we search for copies of $w$ must have large variance. From here, results in the one-dimensional setting are enough.

It seems that Fourier analysis has severe limitations towards attacking \cref{qn:main} in more general settings. For words $w$ of length $\ell$, controlling $C_d(w)$ is reminiscent of understanding the occurrence of $\ell$-term arithmetic progressions in subsets of $\ZZ^d$ with differences restricted to $\{-1,0,1\}^d$. This is a well-studied and very difficult problem, and one for which Fourier-analytic techniques have only recently become available (in the $\ell=3$ case) \cite{BKM}. For $\ell>3$, the situation is even worse. Even without restricted differences, $\ell$-term arithmetic progressions for $\ell>3$ are not well-controlled by Fourier analysis, instead only by higher-order Fourier analysis (see \cite[Section~1.3]{Tao}, specifically Exercises~1.3.4 and 1.3.5). We would be very curious to see whether either of these techniques can be applied to studying $C_d(w)$.

\subsection{Organization}\label{subsec:organization} We organize this document based mostly on the technique used rather than the result proven. We begin in \cref{sec:1d} by solving the one-dimensional case of \cref{qn:main}, a problem for which the additive formulation is most useful. \cref{sec:reductions} focuses on combinatorial reductions, which we use to prove \cref{thm:odd-even,thm:AkBk}. In \cref{sec:local}, we develop the local technique described in \cref{subsec:intro-local} and give some relevant applications. In \cref{sec:short-words}, we combine previous techniques to prove \cref{thm:short-words}, as well as discussing words of length five in two dimensions. In \cref{sec:fourier}, we use Fourier analysis to prove \cref{thm:no-slant,thm:ABB-Zmod3}. In \cref{sec:not-stack}, we investigate some words which are not $d$-stackable, and make a connection to the $n$-queens problem, proving \cref{prop:ABell-1,prop:queen-words}. \cref{sec:conclusion} is the conclusion and includes many directions for further research.

\section{One dimension}\label{sec:1d}

The case of one dimension is the simplest possible nontrivial setting for our problem, and is the foundation for results about the growth of supremum concentration in dimension. In this section, we will determine $C_1(w)$ for every word $w$, classify all $w$-extremal grids, and prove a stability result, describing a way in which grids with $c_1(w,\Gamma)$ near $C_1(w)$ must not be ``too far" from some extremal grid. We begin with the determination of $C_1(w)$, as the notation and case breakdown will guide the remainder of the section.

\begin{proposition}\label{prop:1d} Let $w$ be any word. Let $\ell=\len(w)$, and define
\begin{align*}
c_{\mathrm{left}}&=\text{maximum length of palindromic prefix of }w;\\
c_{\mathrm{right}}&=\text{maximum length of palindromic suffix of }w;\\
c_{\mathrm{repeat}}&=\text{maximum length of substring of $w$, both a proper prefix and a proper suffix}.
\end{align*}
Then
\begin{enumerate}[label={(\alph*)}]
    \item If $w$ is not a palindrome,
    \[C_1(w)=\max\left(\frac1{\ell-c_{\mathrm{repeat}}},\frac1{\ell-\frac{c_{\mathrm{left}}+c_{\mathrm{right}}}2}\right).\]
    \item If $w$ is a palindrome, then $c_{\mathrm{left}} = c_{\mathrm{right}} = \ell$ and $C_1(w)=\frac2{\ell-c_{\mathrm{repeat}}}$.
\end{enumerate}
\end{proposition}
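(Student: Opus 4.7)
My plan is to work in the additive formulation of \cref{qn:main-additive}. Let $F = S_{+1}$ and $B = S_{-1}$ in $\Zmod n$, so the concentration we want to bound is $(|F|+|B|)/n$. Unpacking \eqref{eq:additive-avoidance} in one dimension yields two elementary constraints: any two distinct elements of $F$ (and likewise of $B$) must differ by at least $\ell - c_{\mathrm{repeat}}$, since a closer pair would force a self-overlap of $w$ longer than $c_{\mathrm{repeat}}$; and, for non-palindromic $w$, any $p\in F$ and $q\in B$ must satisfy $q - p \notin [c_{\mathrm{left}}, 2\ell - 2 - c_{\mathrm{right}}]$. The second constraint comes from analyzing the overlap of the forward span $[p,p+\ell-1]$ with the backward span $[q-\ell+1,q]$: the only allowed ``small'' differences $q-p \le c_{\mathrm{left}}-1$ arise from compatibility with the palindromic prefix of $w$, the only allowed ``large'' differences $q-p \ge 2\ell-1-c_{\mathrm{right}}$ from compatibility with the palindromic suffix, and the boundary case $q-p = \ell-1$ would force $w$ itself to be a palindrome.

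These constraints immediately yield the two lower bounds. Tiling $w$ periodically with period $\ell - c_{\mathrm{repeat}}$, gluing successive copies through the self-overlap of length $c_{\mathrm{repeat}}$, gives one appearance per period and concentration $1/(\ell - c_{\mathrm{repeat}})$. Alternately, one can build a period-$(2\ell - c_{\mathrm{left}} - c_{\mathrm{right}})$ ``palindrome-backed'' grid, in which a copy of $w$ starting at position $0$ is glued to a copy of $w^{\mathrm{rev}}$ starting at $\ell - c_{\mathrm{right}}$ through the palindromic suffix, and that reversed copy is glued to the next forward copy of $w$ starting at $2\ell - c_{\mathrm{left}} - c_{\mathrm{right}}$ through the palindromic prefix; this gives two appearances per period and concentration $2/(2\ell - c_{\mathrm{left}} - c_{\mathrm{right}})$. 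When $w$ is a palindrome, the first construction already produces forward and backward appearances simultaneously, doubling the count to $2/(\ell - c_{\mathrm{repeat}})$.

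For the upper bound in case (a), the key trick is to straighten the backward appearances by passing to $\tilde B = B - (\ell-1)$, so that elements of both $F$ and $\tilde B$ now index the left endpoints of length-$\ell$ intervals (the intervals of $\tilde B$ spelling $w^{\mathrm{rev}}$). Translating the constraints above, for $n$ large the gap between consecutive elements of $F \cup \tilde B$ in cyclic order obeys a lower bound of $\ell - c_{\mathrm{repeat}}$ across same-type transitions, $\ell - c_{\mathrm{right}}$ across each $F\tilde B$ transition, and $\ell - c_{\mathrm{left}}$ across each $\tilde B F$ transition. Writing $\alpha = \ell - c_{\mathrm{repeat}}$ and $\beta = 2\ell - c_{\mathrm{left}} - c_{\mathrm{right}}$, and letting $a$, $d$ count the two types of same-type transitions and $b$ the number of each mixed transition (which must be equal by a parity argument), summing the gap lower bounds gives $n \ge (a+d)\alpha + b\beta$ while $|F|+|B| = (a+d) + 2b$. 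The one-line inequality
\[((a+d) + 2b)\min(\alpha, \beta/2) \;\le\; (a+d)\alpha + b\beta \;\le\; n\]
then yields the desired $(|F|+|B|)/n \le \max(1/\alpha, 2/\beta)$.

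In case (b) the cyclic-gap argument degenerates because $\beta = 0$, but here the palindrome identity forces $B = F + (\ell-1)$ as a set equality, so $|F|=|B|$ and the bound follows from the $FF$ constraint alone. The main technical obstacle throughout is the case analysis in the first paragraph---classifying the allowed small and large regimes of $q-p$ in terms of palindromic prefix and suffix conditions and handling the boundary $q-p=\ell-1$ cleanly---after which everything else is essentially bookkeeping. A useful companion fact that drops out is the inequality $c_{\mathrm{left}} + c_{\mathrm{right}} \le \ell + c_{\mathrm{repeat}}$ for non-palindromic $w$, which one proves by noting that the palindromic prefix and suffix force their overlap to be both a prefix and a suffix of $w$; this inequality ensures $\beta \ge \alpha$, so that the ``max'' in the formula is always well-defined and the gap bounds across $F\tilde B$ and $\tilde B F$ transitions never contradict the direct $FF$ constraint.
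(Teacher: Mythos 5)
Your proposal is correct, and its skeleton matches the paper's: the same two periodic constructions for the lower bound, and the same difference-set constraints (same-direction starting points differ by at least $\ell-c_{\mathrm{repeat}}$; mixed differences avoid an interval governed by $c_{\mathrm{left}}$ and $c_{\mathrm{right}}$, with the boundary case forcing $w$ to be a palindrome) for the upper bound. Where you genuinely diverge is in how the numeric bound is extracted from those constraints. The paper proves a standalone additive lemma (\cref{lem:difference-avoid-bound}): after translating $T'=T-c$ it shows the translates $S-r,\ldots,S-1,T',\ldots,T'+(r'-1)$ are pairwise disjoint, deduces $r|S|+r'|T'|\le n$, and then symmetrizes over $(r,r')$ and $(r',r)$ to average away the asymmetry between $b$ and $c$. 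Your argument instead orders $F\cup\tilde B$ cyclically in $\Zmod{n}$, lower-bounds each gap according to its transition type, and uses the parity identity (equal numbers of $F\tilde B$ and $\tilde BF$ transitions) to pair the two mixed gap bounds into $\beta=2\ell-c_{\mathrm{left}}-c_{\mathrm{right}}$ in a single pass; this handles the asymmetry with no symmetrization trick and is arguably more transparent. (Your restriction to large $n$ is harmless, since enlarging a grid preserves concentration; alternatively, any appearance already forces $n\ge\ell-c_{\mathrm{repeat}}$.) What your route buys less of: the paper's lemma comes with equality-case analysis, which is reused to classify extremal grids (\cref{prop:1d-structure}) and prove stability (\cref{prop:1d-stable}); your gap count would need to be redone with equality tracking to serve those later purposes. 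Also, your companion inequality $c_{\mathrm{left}}+c_{\mathrm{right}}\le\ell+c_{\mathrm{repeat}}$ is true but not needed: $((a+d)+2b)\min(\alpha,\beta/2)\le(a+d)\alpha+b\beta$ holds regardless of which of $\alpha,\beta/2$ is smaller.

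One place where you are substantially sketchier than the paper and which deserves flagging is the lower bound. Asserting that the two gluings are well-defined and genuinely contain the claimed appearances of $w$ is exactly the content of \cref{lem:constr-1d-rep,lem:constr-1d-pal}, and it is not automatic when overlaps are large: for a word like $\mathsf{ABABAB}$ (where $c_{\mathrm{left}}=c_{\mathrm{right}}=5$, $c_{\mathrm{repeat}}=4$), consecutive glued copies overlap in all but one or two letters, so a single cell is covered by many copies and pairwise consistency of adjacent gluings must be propagated to all pairs. The paper handles this with the recursive words $a_n,b_n$ and a maximality argument; alternatively one can note that all placed copies are intervals of the same length $\ell$, so the overlap of any two is contained in every intermediate copy and global consistency follows from adjacent consistency by a chain argument. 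This is repairable, so it is a gap of exposition rather than a false step, but it is where most of the work in the paper's lower bound lives.
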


We also state our stability result. We delay the statement of the structural result to the end of \cref{subsec:1d-ex}, at which point we will have defined the necessary constructions.

\begin{proposition}\label{prop:1d-stable} For any word $w$, there exists a letter distribution $h_w$ satisfying the following: if $\delta\geq 0$ and $\Gamma$ is a $1$-dimensional grid satisfying $c_1(w,\Gamma)=C_1(w)(1-\delta)$, then
\[\TV{h_\Gamma-h_w}\leq\delta.\]
\end{proposition}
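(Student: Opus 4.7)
The plan is to choose $h_w$ as the letter distribution of any $w$-extremal grid (from the classification in \cref{prop:1d-structure}), and prove stability by analyzing how appearances of $w$ in $\Gamma$ must be packed and what letters they force.

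Suppose $\Gamma$ has size $n$ with $N=C_1(w)(1-\delta)n$ appearances of $w$. Cyclically order these appearances and choose a case-appropriate notion of ``gap'' between consecutive ones (in case~(a) with $C_1(w)=1/(\ell-c_{\mathrm{repeat}})$ and in case (b), gaps between consecutive forward appearances; in case~(a) with the palindromic-extension optimum, gaps between consecutive appearances regardless of direction). Let $g_{\min}$ denote the minimum legal such gap, which equals $\ell-c_{\mathrm{repeat}}$ in cases (a1) and (b) and $\ell-(c_{\mathrm{left}}+c_{\mathrm{right}})/2$ in case (a2). By \cref{prop:1d}, $g_i\geq g_{\min}$ for every $i$, and $\sum_i g_i=n$ (after accounting for the doubled count in case (b)). Since $g_{\min}\leq\ell$, the uncovered set $P^c$ of positions lying in no appearance satisfies $|P^c|=\sum_i\max(0,g_i-\ell)\leq\sum_i(g_i-g_{\min})=\delta n$.

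For the main letter-count, each consecutive pair of appearances with gap $g_i$ has a ``fresh block'' of $\min(g_i,\ell)$ determined letters (those in the second appearance not covered by the first). For any subset $S\subseteq\mathsf\Sigma$, write $f_S(g_i)$ for the $S$-count of this block. The two key properties, both verified from the extremal structure, are:
\begin{enumerate}[label=(\roman*)]
\item \emph{Extremal match}: if $g_i=g_{\min}$, the fresh block is a cyclic shift of a tile of the extremal grid defining $h_w$, so $f_S(g_{\min})=g_{\min}h_w(S)$.
\item \emph{Monotone extension}: increasing $g_i$ by one exposes one more letter of $w$, so $f_S(g_i)\leq f_S(g_{\min})+(\min(g_i,\ell)-g_{\min})$.
\end{enumerate}
Combining these with an arbitrary contribution $u_S\in[0,|P^c|]$ from uncovered positions, and the identity $\sum_i(\min(g_i,\ell)-g_{\min})+|P^c|=\delta n$, we obtain
\[n_S^\Gamma=\sum_i f_S(g_i)+u_S\leq N g_{\min}h_w(S)+\delta n=(1-\delta)nh_w(S)+\delta n.\]
This yields $h_\Gamma(S)-h_w(S)\leq\delta(1-h_w(S))\leq\delta$ for every $S$, and taking the maximum gives $\TV{h_\Gamma-h_w}\leq\delta$.

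The main obstacle is verifying property~(i), and, in case (a2), the correct choice of ``gap'' ensuring the per-gap lower bound $g_i\geq g_{\min}$. Case (a1) is direct: the extremal grid has period $g_{\min}$ built from the first $g_{\min}$ letters of $w$, and the $c_{\mathrm{repeat}}$ prefix-suffix relation makes the last $g_{\min}$ letters a cyclic shift of the first. In case (a2), the extremal grid alternates forward and backward copies of $w$ with two types of gaps (forward-to-backward and backward-to-forward) whose fresh blocks have distributions individually differing from $h_w$ but averaging to it; handling this requires pairing adjacent gaps in the summation or refining the $f_S$ estimate per gap type, and also ruling out that a non-extremal grid could use a sub-$g_{\min}$ gap to gain count elsewhere. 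The palindrome case (b) reduces to (a1) by considering only forward appearances, each of which contributes twice to the count via its reversed copy. Performing these case-specific verifications, and in particular ensuring (ii) holds uniformly across legal gap types, is the central technical task.
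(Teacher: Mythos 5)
Your overall strategy---cover a near-extremal grid by the ``fresh blocks'' of its appearances and show each block carries letter distribution close to $h_w$---is in the right spirit, but there is a genuine gap in the case you call (a1), and it is not the difficulty you flagged. There you cyclically order \emph{only the forward} appearances, and your computation rests on the number of gaps being $N$ and on the identity $\sum_i(\min(g_i,\ell)-g_{\min})+|P^c|=\delta n$. Both fail as soon as the grid has backward appearances, which near-extremal grids in this case can have in abundance: for $w=\mathsf{SALSA}$ (where $c_{\mathrm{repeat}}=2$, $c_{\mathrm{left}}=c_{\mathrm{right}}=1$, so $C_1(w)=\tfrac13$ via \cref{constr:1d-rep}), the grid $\Grid\big((\mathsf{SAL})^k(\mathsf{LAS})^k\big)$ of size $n=6k$ has $N=2k-2$ appearances, hence $\delta=1/k$, yet half of those appearances are backward. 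Your gap structure sees only the $k-1$ forward ones, so one ``gap'' has length about $3k$: the quantity $\sum_i(g_i-g_{\min})$ is about $3k$ rather than $\delta n=6$, and the roughly $3k$ positions covered only by backward appearances are accounted for neither by fresh blocks nor by the claimed bound $|P^c|\le\delta n$. (The conclusion is still true for this grid---its letter distribution is exactly $h_w$---but your argument cannot see this, because it treats all letters forced by backward appearances as arbitrary.) Note that \emph{extremal} grids in this case do use only one direction, by \cref{prop:1d-structure}(a), but near-extremal ones need not; that is precisely what makes the statement nontrivial.

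The natural repair---putting both directions into the cyclic order---runs headlong into the issue you explicitly deferred to case (a2): direction-changing gaps are only bounded below by $\ell-c_{\mathrm{right}}$ (forward-to-backward) and $\ell-c_{\mathrm{left}}$ (backward-to-forward) separately, and the fresh block of such a gap is a prefix of $w$ read in reverse, whose distribution need not be $h_w$ (for $\mathsf{SALSA}$, the minimal forward-to-backward block is $\mathsf{SLAS}$, with $\mathsf{S}$-frequency $\tfrac12\neq\tfrac13$). So properties (i) and (ii) must be re-derived for each of the four gap types, with direction-changing gaps handled in matched pairs; this ``central technical task'' is not a peripheral verification but the actual content of the proposition, it arises in every non-palindromic case (not just (a2)), and the proposal leaves it undone. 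The paper's proof avoids ordering appearances altogether: writing $S$ and $T$ for the sets of starting positions of forward and backward appearances, it uses the disjointness of the translates $S-r,\ldots,S-1,T-c,\ldots,T+(r'-c-1)$ from the proof of \cref{lem:difference-avoid-bound} (with $r+r'$ equal to $2a$ or to $b+c$ as appropriate), together with \cref{lem:same-dist} and \cref{lem:constr-1d-rep}, to exhibit a set---in the hardest case a multiset built from two such families, with each position of $\Gamma$ counted twice---of $(1-\delta)n$ positions whose letters have distribution exactly $h_w$. That mechanism treats the two directions and the asymmetry between $c_{\mathrm{left}}$ and $c_{\mathrm{right}}$ simultaneously and makes the pairing automatic; something equivalent to it is what your outline is missing.
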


We state this result in terms of letter distribution for two reasons: firstly, it is a natural and easy-to-describe property of a grid, and secondly, letter distribution of near-extremal grids will be relevant in our proof of \cref{thm:no-slant}. More detailed structural information about near-extremal grids can be read off from our proofs, if desired.

\subsection{Examples and constructions}\label{subsec:1d-ex} If $w$ is a word with distinct letters, like the word $\mathsf{CAT}$ studied in \cite{alonkravitz}, the maximum concentration in one dimension is always attained by spelling the word alternatingly forward and backwards, reusing the first and the last letters as in \eqref{dim1distinct}. For many words with repeated letters, it is possible to improve on this concentration. 

For a word with a palindromic prefix or suffix of length longer than one, we may obtain a grid with higher concentration by still spelling the words forwards and backwards alternatingly, but now using the entire palindromic prefix or suffix in both words, rather than just the first and last letters. For instance, for the word $\mathsf{ELEPHANT}$, we can achieve a higher concentration in one dimension with the grid
    \begin{equation}\label{eq:elephant} \cdots\mathsf{HP\underline{ELE}PHAN\underline{T}NAHP\underline{ELE}PH}\cdots,
    \end{equation}
    where the underlined portions indicate letters that are used in two copies of the word. 
    
    For a word which have a string of letters appearing as both a proper prefix and a suffix, we can instead ``overlap'' this string of letters between copies of the word reading in the same direction. For instance, for the word $\mathsf{SALSA}$, we can produce the grid 
    \begin{equation}\label{eq:SALSA1dim}\cdots\mathsf{\underline{SA}L\underline{SA}L\underline{SA}L\underline{SA}L}\cdots
    \end{equation}
    where, as before, the underlined letters are used in more than one copy of the word, but now both copies are reading forward. This is more efficient than the optimal ``alternating forwards--backwards'' labeling, which is
    \begin{equation}\label{SALSAsuboptimal}\cdots\mathsf{\underline{A}SLA\underline{S}ALS\underline{A}SLA\underline{S}}\cdots.\end{equation}

    Note that for some words with a high degree of regularity, it is possible for letters or sequences of letters to be used in more than two words. For instance, for the word $\mathsf{ABABAB}$, in the grid given by \begin{equation}\label{ABABAB}\cdots\mathsf{ABABABABAB}\cdots\end{equation} each letter is used in six appearances of the word, three reading in either direction.

There exist modifications of the constructions we used above for the words $\mathsf{SALSA}$ and $\mathsf{ELEPHANT}$ which can be applied to any word $w$. They are described in general as follows:

\begin{construction}\label{constr:1d-rep} Write $w=vs$ where $\len(v)>0$ and $s$ is a suffix of length $c_\mathrm{repeat}$ that also is a prefix of $w$. Then, construct the grid $\Gamma_1=\Grid(v)$ with shape $\Zmod{(\ell-c_{\mathrm{repeat}})}$.
\end{construction}

\begin{construction}\label{constr:1d-pal} Write $w=p_{\mathrm{left}}u_1=u_2p_{\mathrm{right}}$, where $p_{\mathrm{left}}$ is a palindromic prefix of $w$ of length $c_{\mathrm{left}}$ and $p_{\mathrm{right}}$ is a palindromic suffix of $w$ of length $c_{\mathrm{right}}$. Then, construct the grid $\Gamma_2=\Grid(u_2u_1^{\mathrm{rev}})$ with shape $\Zmod{(2\ell - c_\mathrm{left} - c_\mathrm{right})}$.
\end{construction}

For each word $w$, one of these two constructions will attain the bound in \cref{prop:1d}. In particular, we have the following structural result, which describes all of the extremal constructions given any word $w$.

\begin{proposition}\label{prop:1d-structure} Let $w$, $\ell$, $c_{\mathrm{left}}$, $c_{\mathrm{right}}$, and $c_{\mathrm{repeat}}$ be as in \cref{prop:1d}. Additionally, suppose without loss of generality that $c_{\mathrm{left}} \le c_{\mathrm{right}}$. Recall that we call a grid $\Gamma$ $w$-\emph{extremal} if $c_1(w,\Gamma)=C_1(w)$.
\begin{enumerate}[label={(\alph*)}]
    \item If $c_{\mathrm{left}}+c_{\mathrm{right}}<2c_{\mathrm{repeat}}$, or if $w$ is a palindrome, then a grid is $w$-extremal if and only if it is equivalent to the grid given by \cref{constr:1d-rep}.

    \item If $c_{\mathrm{left}}+c_{\mathrm{right}}>2c_{\mathrm{repeat}}$, then a grid is $w$-extremal if and only if it is equivalent to the grid given by \cref{constr:1d-pal}.

    \item\label{prop:1d-structure c}If $c_{\mathrm{left}}+c_{\mathrm{right}}=2c_{\mathrm{repeat}}$, then the grids given by \cref{constr:1d-rep} and \cref{constr:1d-pal} are inequivalent and both $w$-extremal. Define $v$ as in \cref{constr:1d-rep}, and define $v'$ to be the word formed by the last $\ell-c_{\mathrm{repeat}}$ letters of $u_2u_1^{\mathrm{rev}}$, where $u_1$ and $u_2$ are as in \cref{constr:1d-pal}. Then a grid is $w$-extremal if and only if it is of the form $\Grid(v_1\cdots v_m)$ for some positive integer $m$ and some $v_1,\ldots,v_m\in\{v,v'\}$.\footnote{In this notation, the grid given by \cref{constr:1d-rep} is $\Grid(v)$, while the grid given by \cref{constr:1d-pal} is $\Grid(vv')$.} Moreover, there exist words $x$, $y$, and $z$ so that $v=xyzy^{\mathrm{rev}}$ and $v'=xyz^{\mathrm{rev}}y^{\mathrm{rev}}$.
\end{enumerate}
\end{proposition}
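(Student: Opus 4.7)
My plan is to use the additive reformulation from \cref{subsec:intro-additive}. For a grid $\Gamma$ of shape $\ZZ/n\ZZ$, let $A\subseteq\ZZ/n\ZZ$ denote the set of starting positions of forward appearances of $w$, and let $B$ denote the set of leftmost positions of backward appearances, so that $c_1(w,\Gamma)=(|A|+|B|)/n$; when $w$ is a palindrome, $A=B$, which I treat as a subcase of (a). The first step is to derive lower bounds on the gap $p_{i+1}-p_i$ between consecutive positions of $A\cup B$ in cyclic order. The required letter-agreement in the overlap of two copies of $w$ (or $w^{\mathrm{rev}}$) forces this gap to be at least $\ell-c_{\mathrm{repeat}}$ for same-type pairs (the overlap must simultaneously be a prefix and a suffix of $w$), $\ell-c_{\mathrm{right}}$ for an $A$ followed by a $B$ (the overlap forces a right-palindromic suffix of $w$), and $\ell-c_{\mathrm{left}}$ for a $B$ followed by an $A$.

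Summing these bounds around the cycle and letting $g$ denote the number of $A$-to-$B$ transitions (also equal, by cyclic balance, to the number of $B$-to-$A$ transitions) yields
\[n\;\geq\;(|A|+|B|)(\ell-c_{\mathrm{repeat}})+g\bigl(2c_{\mathrm{repeat}}-c_{\mathrm{left}}-c_{\mathrm{right}}\bigr).\]
Comparing with the value of $C_1(w)$ from \cref{prop:1d}, extremality forces every gap to be tight and pins down the allowed values of $g$: $g=0$ (all appearances of the same type) in case (a), $g=|A|=|B|$ (strict alternation) in case (b), and $g$ arbitrary in case (c). In cases (a) and (b), the cyclic pattern of $A$s and $B$s together with the tight overlaps determine every letter of $\Gamma$, yielding equivalence to $\Grid(v)$ and $\Grid(u_2u_1^{\mathrm{rev}})$ respectively; the palindromic subcase of (a) collapses identically with $A=B$ and \cref{constr:1d-rep}.

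The main obstacle is case (c), where the cyclic sequence of types is essentially unconstrained. Here the identity $c_{\mathrm{left}}+c_{\mathrm{right}}=2c_{\mathrm{repeat}}$ implies that the tight gaps sum to $n=km$ with $m:=\ell-c_{\mathrm{repeat}}$ and $k:=|A|+|B|$. I define a shift $\delta:=c_{\mathrm{repeat}}-c_{\mathrm{left}}$ and, for each $A$-appearance at $a$, consider the length-$m$ block $\Gamma(a)\cdots\Gamma(a+m-1)$, and for each $B$-appearance at $b$, the block $\Gamma(b+\delta)\cdots\Gamma(b+\delta+m-1)$. A case-by-case check across the four transition types AA/AB/BA/BB---using the tight gap values together with the case-(c) identity---shows that consecutive block-starting-positions differ by exactly $m$, so after one translation these $k$ blocks partition the cycle at multiples of $m$. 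The $A$-block equals $w_0\cdots w_{m-1}=v$ by definition of an $A$-appearance; the $B$-block equals the last $m$ letters of $u_2u_1^{\mathrm{rev}}$, i.e.\ $v'$, by a direct index computation using the definitions of $u_1$ and $u_2$ and the case-(c) identity.

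Finally, to establish the decomposition, I set
\[x=w_0\cdots w_{c_{\mathrm{left}}-1},\quad y=w_{c_{\mathrm{left}}}\cdots w_{c_{\mathrm{repeat}}-1},\quad z=w_{c_{\mathrm{repeat}}}\cdots w_{\ell-c_{\mathrm{right}}-1},\]
so that $xyz=u_2$ and $|x|+2|y|+|z|=m$, using the case-(c) identity. The equality $v=xyzy^{\mathrm{rev}}$ reduces to checking that the last $|y|=c_{\mathrm{repeat}}-c_{\mathrm{left}}$ letters of $w_0\cdots w_{m-1}$ equal $y^{\mathrm{rev}}$, which follows by composing the repeat condition (of length $c_{\mathrm{repeat}}$) with the right-palindromic condition (of length $c_{\mathrm{right}}$). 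The analogous identity for $v'$ follows from the explicit form of $u_2u_1^{\mathrm{rev}}$. The converse direction in case (c)---that any concatenation $v_1\cdots v_k$ with $v_i\in\{v,v'\}$ is an extremal grid---follows from the decomposition: the common prefix $xy$ of length $c_{\mathrm{repeat}}$ and common suffix $y^{\mathrm{rev}}$ of length $\delta$ across $v$ and $v'$ ensure that every $A$- or $B$-appearance associated to a block extends consistently across the boundary via the repeat and left-palindromic conditions, and a block-by-block count gives total concentration $k/(km)=1/m=C_1(w)$.
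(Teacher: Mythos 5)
Your overall mechanism is sound and is in fact a somewhat different route from the paper's: where the paper factors the equality analysis through a standalone additive lemma (\cref{lem:difference-avoid-bound}, proved by packing disjoint translates of the two position sets), you sum gap lower bounds between cyclically consecutive appearances and do transition bookkeeping. Your gap bounds, the inequality $n \geq (|A|+|B|)(\ell-c_{\mathrm{repeat}})+g(2c_{\mathrm{repeat}}-c_{\mathrm{left}}-c_{\mathrm{right}})$, the forcing of $g$ in each of the three cases, and the block-tiling argument in case (c) with the shift $\delta=c_{\mathrm{repeat}}-c_{\mathrm{left}}$ all check out and recover the paper's conclusions, including that the $B$-blocks equal $v'$. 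Two smaller points you should still make explicit: an extremal grid contains at least one appearance, and a (possibly wrap-around) appearance already forces $n\geq \ell-c_{\mathrm{repeat}}$, which is needed for your cyclic gap bounds to apply; and part (c) also asserts that the two constructions are \emph{inequivalent}, which you never verify (it follows because $\Grid(v)$ contains copies of $w$ in only one direction while $\Grid(vv')$ contains them in both).

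There is, however, a genuine gap in your case (c): you define $z=w_{c_{\mathrm{repeat}}}\cdots w_{\ell-c_{\mathrm{right}}-1}$ and assert $xyz=u_2$ and $|x|+2|y|+|z|=m$. This presupposes $c_{\mathrm{repeat}}+c_{\mathrm{right}}\leq\ell$, i.e., that the repeated prefix $s$ and the palindromic suffix $p_{\mathrm{right}}$ do not overlap; otherwise $z$ has negative length and the decomposition $v=xyzy^{\mathrm{rev}}$ is meaningless as written. This inequality is not automatic: for words outside case (c), e.g.\ $\mathsf{ABABAB}$ (where $c_{\mathrm{repeat}}=4$, $c_{\mathrm{right}}=5$, $\ell=6$), it fails, so it must be deduced from the case (c) hypothesis. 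Proving it is where the paper spends real effort: it first shows $2c_{\mathrm{repeat}}<\ell$ (otherwise $vv$ and $u_2u_1^{\mathrm{rev}}$ would both be prefixes of $w$ of the same length $2(\ell-c_{\mathrm{repeat}})$, forcing the two constructions to coincide, contradicting their inequivalence), and then rules out overlap of $s$ and $p_{\mathrm{right}}$ by showing an overlap would produce a palindromic prefix of $w$ of length $\ell-|y|>c_{\mathrm{left}}$, contradicting the maximality of $c_{\mathrm{left}}$. Since both your ``Moreover'' clause and your converse direction in (c) (which leans on $v$ and $v'$ sharing the prefix $xy$ and the suffix $y^{\mathrm{rev}}$) rest on this decomposition, the missing non-overlap argument is a hole you need to fill before the proof of (c) is complete.
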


The last condition in (c) is an intermediate step in proving the proposition, and will be useful in proving \cref{prop:1d-stable}.

\subsection{Lower bound}\label{subsec:lower}

We begin by proving the lower bound of \cref{prop:1d}. We analyze \cref{constr:1d-rep} and \cref{constr:1d-pal} separately. Our arguments work in full generality, and their full strength is only needed for words like $\mathsf{ABABAB}$, where some of $\{c_{\mathrm{left}},c_{\mathrm{right}},c_{\mathrm{repeat}}\}$ are quite large. Thus, the arguments may be somewhat opaque. Simpler arguments would be possible if we imposed restrictions on the sizes of $c_{\mathrm{left}}, c_{\mathrm{right}}$, and $c_{\mathrm{repeat}}$. The reader may wish to keep in mind the model cases given above in \eqref{eq:elephant} and \eqref{eq:SALSA1dim}.

\begin{lemma}\label{lem:constr-1d-rep} The grid $\Gamma_1$ given in \cref{constr:1d-rep} satisfies
\[c_1(w,\Gamma_1)\geq\begin{cases}\frac1{\ell-c_{\mathrm{repeat}}}&\text{if }w\text{ is not a palindrome}\\\frac2{\ell-c_{\mathrm{repeat}}}&\text{if }w\text{ is a palindrome}.\end{cases}\]
\end{lemma}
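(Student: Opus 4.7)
The plan is to work directly with the infinite periodic grid $G^\infty\colon\ZZ\to\mathsf\Sigma$ corresponding to $\Gamma_1$, which satisfies $G^\infty(i)=v_{i\bmod |v|}$ where $|v|=\ell-c_{\mathrm{repeat}}$. I will exhibit a \emph{canonical} appearance of $w$ in $\Gamma_1$: reading $\ell$ letters forward from position~$0$. Showing this spells $w$ immediately gives an appearance $(0,+1)$, so $\ct(w,\Gamma_1)\geq 1$ and $c_1(w,\Gamma_1)\geq 1/|v|=1/(\ell-c_{\mathrm{repeat}})$. For the palindrome case, I will then observe that reading $\ell$ letters \emph{backward} from position $\ell-1$ spells $w^{\mathrm{rev}}=w$, supplying a second appearance at $((\ell-1)\bmod|v|,-1)$, which is distinct from the first by virtue of the opposite direction, yielding $\ct(w,\Gamma_1)\geq 2$.

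The technical heart is the identity $G^\infty(i)=w_i$ for $0\leq i<\ell$, or equivalently $w_i=w_{i\bmod|v|}$. For $i<|v|$ this is immediate, since $v$ is the length-$|v|$ prefix of $w$. For $|v|\leq i<\ell$, I will use the defining property of $c_{\mathrm{repeat}}$: writing $w=vs$ with $\len(s)=c_{\mathrm{repeat}}=\ell-|v|$, the condition that $s$ is also a prefix of $w$ gives $w_i=s_{i-|v|}=w_{i-|v|}$. An induction on $\lfloor i/|v|\rfloor$ then yields $w_i=w_{i\bmod|v|}$: the inductive hypothesis applies to $i-|v|$, which lies in $[0,c_{\mathrm{repeat}})\subset[0,\ell)$, and $(i-|v|)\bmod|v|=i\bmod|v|$.

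The main obstacle is precisely this induction, because one must be careful when $c_{\mathrm{repeat}}>|v|$ (as in $w=\mathsf{ABABAB}$, where $|v|=2$ and $c_{\mathrm{repeat}}=4$): the identity $w_i=w_{i-|v|}$ must be applied several times in succession, so one needs to verify that each intermediate index $i-k|v|$ still lies in the range $[|v|,\ell)$ where the base relation holds. This is automatic from strong induction on $i$ together with the fact that $i-|v|<\ell$, but the appearance of ``nested'' palindromic or periodic structure makes it necessary to spell out this iteration rather than treating the case $|v|\leq i<\ell$ in a single step. Once this identity is in hand, both the non-palindrome and palindrome bounds follow in a few lines.
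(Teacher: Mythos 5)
Your proof is correct and takes essentially the same route as the paper: both arguments show that the periodic repetition of $v$ spells out $w$ — you via strong induction on indices using the relation $w_i=w_{i-|v|}$ forced by $s$ being both a prefix and a suffix, the paper by showing $s$ (hence $w$) is a prefix of a suitable power $v^{n+1}$ (resp.\ $v^{n+2}$) via a maximality argument — and both then obtain the palindrome case from the additional reversed appearance. The two verifications of periodicity are interchangeable bookkeeping for the same underlying idea.
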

\begin{proof} We use the notation in the statement of \cref{constr:1d-rep}. Let $n\geq 0$ be maximal so that $s$ contains $v^n$ as a prefix. Then $w$ contains both $v^{n+1}$ and $s$ as prefixes, so one is a prefix of the other. By the maximality of $n$, $v^{n+1}$ cannot be a prefix of $s$, so $s$ is a prefix of $v^{n+1}$. In particular, $w$ is contained within $v^{n+2}$, so $w$ appears in $\Gamma_1$. 

Since $\Gamma_1$ has size $\ell-c_{\mathrm{repeat}}$, this immediately implies that
\[c_1(w,\Gamma_1)\geq\frac1{\ell-c_{\mathrm{repeat}}}.\]
If $w$ is a palindrome, then $w$ appears in $\Gamma_1$ both forwards and backwards, so we have twice the above bound in this case.
\end{proof}

\begin{lemma}\label{lem:constr-1d-pal} The grid $\Gamma_2$ given in \cref{constr:1d-pal} satisfies
\[c_1(w,\Gamma_2)\geq\frac1{\ell-\frac{c_{\mathrm{left}}+c_{\mathrm{right}}}2}.\]
\end{lemma}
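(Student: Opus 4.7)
The plan is to exhibit two distinct appearances of $w$ in $\Gamma_2$, which will give a concentration of at least
\[\frac{2}{|\Gamma_2|}=\frac{2}{2\ell-c_{\mathrm{left}}-c_{\mathrm{right}}}=\frac{1}{\ell-\tfrac{c_{\mathrm{left}}+c_{\mathrm{right}}}{2}}.\]
Concretely, I aim to produce a forward copy of $w$ at position $0$ (direction $\bv=+1$) and a backward copy of $w$ at position $c_{\mathrm{left}}-1$ (direction $\bv=-1$). Since appearances are pairs $(p,\bv)$ and these two have opposite direction vectors, they are automatically counted as distinct, so I do not need a separate distinctness argument.

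For the forward copy, I would unroll $\Gamma_2$ into its infinite periodic counterpart $\cdots u_2 u_1^{\mathrm{rev}} u_2 u_1^{\mathrm{rev}}\cdots$. The first $\ell-c_{\mathrm{right}}$ letters form the prefix $u_2$ of $w$ and match by definition, so the task is to show that the next $c_{\mathrm{right}}$ letters read off the palindromic suffix $p_{\mathrm{right}}$ of $w$. In the generic case $c_{\mathrm{left}}+c_{\mathrm{right}}\le\ell$ these letters all lie in the following $u_1^{\mathrm{rev}}$ block, which reads the end of $w$ in reverse order; the match then reduces immediately to the palindromic identity $w_{\ell-1-j}=w_{\ell-c_{\mathrm{right}}+j}$ characterizing $p_{\mathrm{right}}$. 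In the harder wrap-around case $c_{\mathrm{left}}+c_{\mathrm{right}}>\ell$ the target positions overflow the first $u_1^{\mathrm{rev}}$ block and cross into subsequent $u_2$ and $u_1^{\mathrm{rev}}$ blocks, and I would chain together the palindromic identities $w_i=w_{2\ell-c_{\mathrm{right}}-1-i}$ (from $p_{\mathrm{right}}$) and $w_j=w_{c_{\mathrm{left}}-1-j}$ (from $p_{\mathrm{left}}$), alternating one application per block boundary crossed, to verify that the overflow letters remain correct.

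For the backward copy, apply the same forward argument to the reversed word $w^{\mathrm{rev}}$: this word has palindromic prefix $p_{\mathrm{right}}$ of length $c_{\mathrm{right}}$ and palindromic suffix $p_{\mathrm{left}}$ of length $c_{\mathrm{left}}$, and running \cref{constr:1d-pal} on it produces the grid $\Grid(u_1^{\mathrm{rev}}u_2)$, which is simply the cyclic translate of $\Gamma_2$ by $|u_2|=\ell-c_{\mathrm{right}}$. The forward appearance of $w^{\mathrm{rev}}$ in this translated grid at position $0$, guaranteed by the first part of the proof, pulls back to a backward appearance of $w$ in $\Gamma_2$ at position $c_{\mathrm{left}}-1$.

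The main obstacle is the wrap-around regime $c_{\mathrm{left}}+c_{\mathrm{right}}>\ell$, in which $\Gamma_2$ may even be shorter than $w$ itself---for instance, $w=\mathsf{ABAB}$ gives $\Gamma_2=\Grid(\mathsf{AB})$. There the verification that the claimed forward copy is genuinely present amounts to an index-chasing argument invoking both palindromicity hypotheses in alternation, and this is the only nontrivial computation required for the lemma.
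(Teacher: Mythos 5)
Your proposal is correct, and its core mechanism genuinely differs from the paper's. The skeleton is shared: both proofs reduce to producing one forward appearance and obtain the backward one by feeding $w^{\mathrm{rev}}$ into \cref{constr:1d-pal} (your observations that this yields $\Grid(u_1^{\mathrm{rev}}u_2)$, a translate of $\Gamma_2$, and that the backward copy sits at position $c_{\mathrm{left}}-1$, are both accurate, as is the remark that the two appearances are automatically distinct since their direction vectors differ). The difference lies in how the forward appearance is verified. The paper never chases indices: it defines words $a_n,b_n$ by the recursion $a_{n+1}=u_2b_n^{\mathrm{rev}}$, $b_{n+1}=a_n^{\mathrm{rev}}u_1$, observes that every $a_n$ reads left-to-right in $\Gamma_2$, and runs a maximality argument (choose $n$ maximal with $a_n$ a prefix of $p_{\mathrm{left}}$ and $b_n$ a suffix of $p_{\mathrm{right}}$) forcing $w$ to be a subword of some $a_n$; the generic and wrap-around regimes are handled uniformly. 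You instead verify letters directly via the two reflections $i\mapsto 2\ell-c_{\mathrm{right}}-1-i$ and $j\mapsto c_{\mathrm{left}}-1-j$, and this does work; the one point you should make explicit, which is what closes the wrap-around case you left as a plan, is that the composition of these two reflections is translation by exactly $N:=2\ell-c_{\mathrm{left}}-c_{\mathrm{right}}$, the period of $\Gamma_2$. With that, a strong induction on $0\le t<\ell$ finishes the verification: for $t<\ell-c_{\mathrm{right}}$ the letter of $\Gamma_2$ at $t$ is $w_t$ by construction; for $\ell-c_{\mathrm{right}}\le t<\min(\ell,N)$ it is $w_{2\ell-c_{\mathrm{right}}-1-t}$, which equals $w_t$ by the $p_{\mathrm{right}}$ reflection; and for $N\le t<\ell$, periodicity gives the letter at $t-N$, equal to $w_{t-N}$ by induction, while $w_{t-N}=w_t$ follows from one application of each reflection --- the range conditions needed ($\ell-c_{\mathrm{right}}\le t\le \ell-1$ for the first, $t\ge N$ for the second) hold exactly in this case. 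In exchange for this bookkeeping, your route is more elementary and makes the generic case transparent at a glance; the paper's recursion is better adapted to heavily self-overlapping words such as $\mathsf{ABABAB}$, where the copy of $w$ wraps around many times and explicit index-chasing is replaced by the maximality argument.
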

\begin{proof} We use the notation in the statement of \cref{constr:1d-pal}. Since $\Gamma_2$ has size $2\ell-c_{\mathrm{left}}-c_{\mathrm{right}}$, it is enough to show that $w$ appears once forwards and once backwards in $\Gamma_2$. By the symmetry of the construction --- if we feed $w^{\mathrm{rev}}$ into \cref{constr:1d-pal}, we obtain a grid of the same size filled with the letters of $u_1^{\mathrm{rev}}u_2$, which is equivalent without reflection to $\Gamma_2$ ---  it suffices to show that $w$ appears in $\Gamma_2$ forwards.

For $n\ge 0$, define words $a_n$ and $b_n$ recursively as follows: let $a_0$ and $b_0$ be the empty word, and for $n\geq 1$ let
\[
a_n = 
\begin{cases} 
      (u_2u_1^{\mathrm{rev}})^{n/2} & n \text{ even} \\
      a_{n-1}u_2 & n \text{ odd}
   \end{cases}
\quad\text{and}\quad b_n = 
\begin{cases} 
      (u_2^{\mathrm{rev}}u_1)^{n/2} & n \text{ even} \\
      u_1b_{n-1} & n \text{ odd}.
   \end{cases}
\]
Note that $a_n^{\mathrm{rev}} u_1 = b_{n+1}$ and $u_2b_n^{\mathrm{rev}} = a_{n+1}$. Additionally, $a_n$ appears in the grid $\Gamma_2$ reading left-to-right for all $n$, so it suffices to show that $w$ is a subword of $a_n$ for some $n$.

Let $n\geq 0$ be maximal so that $a_n$ is a prefix of $p_\mathrm{left}$ and $b_n$ is a suffix of $p_\mathrm{right}$. Then $p_\mathrm{left}$ has $a_n^\mathrm{rev}$ as a suffix and $p_\mathrm{right}$ has $b_n^{\mathrm{rev}}$ as a prefix. Therefore $a_{n+1}$ is a prefix of $w$ and $b_{n+1}$ is a suffix of $w$. Therefore one of $p_\mathrm{left}$ and $a_{n+1}$ is contained in the other, and one of $p_\mathrm{right}$ and $b_{n+1}$ is contained in the other. 

By the maximality of $n$, either $a_{n+1}$ contains $p_\mathrm{left}$ or $b_{n+1}$ contains $p_\mathrm{right}$. If $a_{n+1}$ contains $p_\mathrm{left}$, then $a_{n+1}^\mathrm{rev} u_1 = b_{n+2}$ has $w$ as a suffix, so $b_{n+2}^\mathrm{rev}$ has $p_\mathrm{right}$ as a prefix, so $w$ is a subword of $u_2 b_{n+2}^\mathrm{rev} = a_{n+3}$. If $b_{n+1}$ contains $p_\mathrm{right}$, then $u_2 b_{n+1}^\mathrm{rev} = a_{n+2}$ has $w$ as a prefix. Either way, $w$ appears left-to-right in $\Gamma_2$, as desired.     
\end{proof}

The lower bound of \cref{prop:1d} follows immediately from \cref{lem:constr-1d-rep,lem:constr-1d-pal}.

\subsection{Upper bound}\label{subsec:upper} We now prove the upper bound of \cref{prop:1d}. It is here that we introduce an additive framework, specialized from the general description given in \cref{subsec:intro-additive} to one dimension, which will be useful in proving \cref{prop:1d-stable,prop:1d-structure}.

For sets $A,B\subset\Zmod n$, denote by $A-B$ the difference set $\{a-b : a\in A,b\in B\}$. We begin by showing the following lemma. We will use the bound to prove the upper bound of \cref{prop:1d} and the structural properties to prove \cref{prop:1d-structure}.

\begin{lemma}\label{lem:difference-avoid-bound} Let $S,T\subset\Zmod n$ and suppose $a,b,c>0$ are integers so that the sets
\[(S-S)\cap\{1,\ldots,a-1\},\quad (T-T)\cap\{1,\ldots,a-1\},\quad (S-T)\cap\{-(c-1),\ldots,b-1\}\]
are all empty. Then
\begin{equation}\label{eq:additive-ub}
|S|+|T|\leq \max\left(\frac{n}a,\frac{2n}{b+c}\right).
\end{equation}
In addition, the bound \eqref{eq:additive-ub} enjoys the following structural properties:
\begin{enumerate}[label={(\alph*)}]
    \item If $a<\frac{b+c}2$ and equality holds in \eqref{eq:additive-ub}, then $a\mid n$, one of $S$ and $T$ is empty, and the other is a translate of $a\ZZ/n\ZZ$.

    \item If $a>\frac{b+c}2$ and equality holds in \eqref{eq:additive-ub}, then $(b+c)\mid n$ and $S=T-c$, and both $S$ and $T$ are translates of $(b+c)\ZZ/n\ZZ$.

    \item If $a = \frac{b+c}2$ and equality holds in \eqref{eq:additive-ub}, then $a \mid n$ and $S \sqcup (T+a-c)$ is a translate of $a\ZZ/n\ZZ$. 
\end{enumerate}
\end{lemma}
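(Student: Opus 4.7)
The plan is to analyze the cyclic structure of $S\cup T\subset\Zmod n$ by counting gaps. First I would observe that $S\cap T=\emptyset$, since the hypothesis includes $0\in\{-(c-1),\ldots,b-1\}$ (as $b\geq 1$). Then I would list the elements of $S\sqcup T$ in cyclic order and classify each consecutive pair by the types (in $S$ or in $T$) of its two endpoints. The three avoidance hypotheses translate directly into lower bounds on the four possible gap types: $SS$ and $TT$ gaps are at least $a$; an $ST$ gap (an element of $S$ immediately followed by an element of $T$ at distance $t-s$) is at least $c$, since $t-s\in\{1,\ldots,c-1\}$ is forbidden by $(S-T)\cap\{-(c-1),\ldots,b-1\}=\emptyset$; and similarly a $TS$ gap is at least $b$.

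Let $\alpha=|S|$, $\beta=|T|$, and let $k$ denote the number of $ST$ transitions around the cycle, which equals the number of $TS$ transitions by cyclicity. The counts of the four transition types are then $\alpha-k,\beta-k,k,k$, and summing gaps around the cycle yields
\[n\;\geq\;a(\alpha+\beta-2k)+(b+c)k\;=\;a(\alpha+\beta)+(b+c-2a)\,k.\]
Since $0\leq k\leq\min(\alpha,\beta)\leq\tfrac12(\alpha+\beta)$, the right-hand side, which is linear in $k$, attains its minimum over this range equal to $\min\!\bigl(a,(b+c)/2\bigr)\cdot(\alpha+\beta)$ (whichever endpoint is favored by the sign of $b+c-2a$). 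Thus $\alpha+\beta\leq\max\!\bigl(n/a,\,2n/(b+c)\bigr)$, giving the main inequality.

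For the structural claims I would read off the equality cases of the display above. In case (a), $a<(b+c)/2$, equality forces $k=0$, so there are no transitions between $S$ and $T$; on a cycle this is only possible if one of $S,T$ is empty. The nonempty one, say $S$, then has $|S|=n/a$ gaps summing to $n$ each of size at least $a$, so each is exactly $a$, making $S$ a translate of $a\Zmod n$ and forcing $a\mid n$. In case (b), $a>(b+c)/2$, equality forces $k=(\alpha+\beta)/2$, hence $\alpha=\beta=k$; then $S$ and $T$ alternate in cyclic order with $ST$ gaps all equal to $c$ and $TS$ gaps all equal to $b$, so $T=S+c$ and both sets are translates of $(b+c)\Zmod n$. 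In case (c), $a=(b+c)/2$, the coefficient of $k$ vanishes, so equality merely forces each gap to attain its minimum ($a$, $a$, $c$, or $b$). I would then verify directly that the set $S'\defeq S\sqcup(T+a-c)$ has all consecutive cyclic gaps equal to $a$: $SS$ and $TT$ gaps are preserved, an $ST$ gap $s\to t=s+c$ becomes an $S\to T+(a-c)$ pair at distance $a$, and a $TS$ gap $t\to s=t+b$ becomes a $(T+(a-c))\to S$ pair at distance $b-(a-c)=a$ using $b+c=2a$. Hence $S'$ is a translate of $a\Zmod n$ (so $a\mid n$); disjointness $S\cap(T+a-c)=\emptyset$ is automatic because $a-c=b-a$ lies in $\{-(c-1),\ldots,b-1\}$ and so is forbidden by hypothesis.

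The main obstacle I expect is the case (c) analysis: the bound itself is a standard cycle-gap computation, but one has to discover the right shift $a-c$ so that $S$ and $T$ combine into a clean arithmetic progression, and to check that all four gap types collapse to length $a$ under this shift. Once the identity $b+c=2a$ is used, everything aligns, and disjointness follows from the original avoidance hypothesis, but identifying this exact structure is the substantive content of that case.
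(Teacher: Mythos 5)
Your proof is correct, and it takes a genuinely different route from the paper's. The paper never orders $S\cup T$ cyclically: it sets $T'=T-c$, observes that the translates $S-r,\ldots,S-1,T',\ldots,T'+(r'-1)$ are pairwise disjoint whenever $\max(r,r')\leq a$ and $r+r'\leq b+c$, deduces $r|S|+r'|T'|\leq n$, symmetrizes by swapping $r$ and $r'$, and then chooses $(r,r')$ according to the sign of $2a-(b+c)$; the equality cases are then read off from the induced partitions of $\Zmod{n}$ (for instance, case (a) requires the slightly clever observation that $T'+a$ and $T'+2a$ are both subsets of $U\setminus S$ of size $|U\setminus S|$, hence equal, where $U=S\sqcup(T'+a)$). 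Your gap-counting argument compresses the packing step into the single linear inequality $n\geq a(\alpha+\beta)+(b+c-2a)k$ with $0\leq k\leq\min(\alpha,\beta)$, which handles both branches of the maximum at once, and it makes the equality analysis essentially mechanical: every gap is forced to be tight, which immediately yields (a) (via the clean remark that $k=0$ on a cycle with both sets nonempty is impossible), (b), and (c). Note that your set $S\sqcup(T+a-c)$ is exactly the paper's $U=S\sqcup(T'+a)$, and your separate disjointness verification (that $a-c$ lies in the forbidden interval $\{-(c-1),\ldots,b-1\}$) is something the paper gets for free from its partition. What each approach buys: yours is more elementary and uniform, with the structural conclusions falling directly out of tight gaps rather than requiring ad hoc arguments; the paper's translate-packing avoids all run/transition bookkeeping (including the identity between the numbers of $ST$ and $TS$ transitions, and the implicit assumption $S\cup T\neq\emptyset$ in your gap-sum identity --- a trivial edge case, but one you should state), and its symmetrization is the more standard and more readily generalizable move for additive problems of this kind.
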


\begin{proof} We first verify \eqref{eq:additive-ub}. Set $T'=T-c$, so that $T'-T'$ avoids $1,\ldots,a-1$ and $S-T'$ avoids $1,\ldots,b+c-1$. Let $r,r'$ be positive integers so that $\max(r,r')\leq a$ and $r+r'\leq b+c$. We claim that the $r+r'$ sets
\[S-r,\ldots,S-1,T',\ldots,T'+(r'-1)\]
are disjoint. Indeed, the translates of $S$ are disjoint since $S-S$ avoids $1,\ldots,r-1$, the translates of $T'$ are disjoint since $T'-T'$ avoids $1,\ldots,r'-1$, and the translates of $S$ are disjoint from the translates of $T'$ since $S-T'$ avoids $1,\ldots,r+r'-1$. Since these sets are all contained in $\Zmod n$, we conclude
\[r|S|+r'|T|=r|S|+r'|T'|\leq n.\]
By swapping $r$ and $r'$, we also have $r'|S|+r|T|\leq n$, and so
\[|S|+|T|\leq \frac{2n}{r+r'}.\]
If $2a\leq b+c$, take $r=r'=a$ to get $|S|+|T|\leq \frac na$. If $2a>b+c$, take $r=\lfloor\frac{b+c}2\rfloor$ and $r'=\lceil\frac{b+c}2\rceil$ to get $|S|+|T|\leq \frac{2n}{b+c}$. This gives \eqref{eq:additive-ub}.

We now prove the structural properties.
\begin{enumerate}[label={(\roman*)}]
    \item \label{item:first} Suppose $a\leq\frac{b+c}2$. If equality holds in \eqref{eq:additive-ub}, then the sets
    \[S-a,\ldots,S-1,T',\ldots,T'+(a-1)\]
    partition $\ZZ/n\ZZ$. By translation, the sets
    \[S-(a-1),\ldots,S,T'+1,\ldots,T'+a\]
    do as well. We conclude that
    \[S\sqcup(T'+a)=T'\sqcup(S-a).\]
    Let $U=S\sqcup(T'+a)$; then $|U|=n/a$ and $U=U-a$. We conclude that $U$ is a translate of $a\ZZ/n\ZZ$. In the case where $a=\frac{b+c}2$, this is enough to show (c). If $a<\frac{b+c}2$, then $S$ and $T'+2a$ are disjoint. Both $T'+a$ and $T'+2a$ are subsets of $U\setminus S$ of size $|U\setminus S|$, so we in fact have $T'+a=T'+2a$, i.e.~$T'=T'+a$. Similarly, $S=S+a$. Since $S\sqcup(T'+a)$ consists of exactly one residue class modulo $a$, one of $S$ and $T'$ is empty in this case.

    \item Suppose $a>\frac{b+c}2$. Let $r=\lfloor\frac{b+c}2\rfloor$ and $r'=\lceil\frac{b+c}2\rceil$, so that $r<a$ and $r'\leq a$. If equality holds in \eqref{eq:additive-ub}, then we must have $r|S|+r'|T'|=r'|S|+r|T'|=n$, and the sets
    \[\{S-r,\ldots,S-1,T',\ldots,T'+(r'-1)\}\text{ and }\{S-(r-1),\ldots,S,T'+1,\ldots,T'+r\}\]
    partition $\ZZ/n\ZZ$. We conclude that $(T'+r')\sqcup S=T'\sqcup(S-r)$.

    Since $r<a$, we have that $S$ and $S-r$ are disjoint. If $r=r'$, then $r'<a$, and so $T'$ and $T'+r'$ are disjoint as well. If $r\neq r'$, then $r|S|+r'|T'|=r'|S|+r|T'|=n$ implies $|S|=|T'|$, and the fact that $T'$ and $T'+r'$ are disjoint follows from $(T'+r')\sqcup S=T'\sqcup(S-r)$. In either case, we conclude that $S=T'$ and $T'+r'=S-r$. In particular, both $S$ and $T'$ are of the same size $n/(b+c)$ and are preserved under adding $b+c$. So, both are translates of $(b+c)\ZZ/n\ZZ$, as desired. \qedhere
\end{enumerate}
\end{proof}

\begin{proof}[Proof of \cref{prop:1d}, upper bound]
Let $n$ be a positive integer, and consider any one-dimensional grid $\Gamma$ of shape $\Zmod{n}$. Define the sets
\begin{align*}
    S &= \{\text{leftmost indices of appearances of $w$ reading left-to-right}\}\\
    T &= \{\text{leftmost indices of appearances of $w$ reading right-to-left}\}.
\end{align*}
The number of appearances of $w$ in $\Gamma$ is $|S|+|T|$, and the concentration is $\frac{|S|+|T|}n$. So, we seek an upper bound on $|S|+|T|$. We record the following observations:
\begin{itemize}
    \item If $0<d<\ell$ and $d\in S-S$, then the last $\ell-d$ characters and the first $\ell-d$ characters of copies of $w$ in $\Gamma$ coincide, and so these characters form a proper prefix and a proper suffix of $w$. So, $\ell-d\leq c_{\mathrm{repeat}}$. This implies that
    \begin{equation}\label{eq:S-S}
        (S-S)\cap\big\{1,\ldots,\ell-c_{\text{repeat}}-1\big\}=\emptyset.
    \end{equation}

    \item By applying the argument above to $T$, we get that
    \begin{equation}\label{eq:T-T}
        (T-T)\cap\big\{1,\ldots,\ell-c_{\text{repeat}}-1\big\}=\emptyset.
    \end{equation}

    \item If $0\leq d<\ell$ and $d\in S-T$, then the first $\ell-d$ characters of a left-to-right copy of $w$ in $\Gamma$ and the last $\ell-d$ characters of a right-to-left copy of $w$ in $\Gamma$ coincide. This implies that the first $\ell-d$ characters of $w$ form a palindrome. So, $\ell-d\leq c_{\text{left}}$ (unless $w$ itself is a palindrome, in which case this inequality holds whenever $d\neq 0$). We conclude that
    \begin{equation}\label{eq:S-T}
        (S-T)\cap\big\{1,\ldots,\ell-c_{\text{left}}-1\big\}=\emptyset.
    \end{equation}

    \item By applying the argument above to $T-S$, we get that
    \begin{equation}\label{eq:T-S}
        (T-S)\cap\big\{1,\ldots,\ell-c_{\text{right}}-1\big\}=\emptyset.
    \end{equation}
\end{itemize}

If $w$ is not a palindrome, then $0\not\in S-T$. This, combined with \eqref{eq:S-T} and \eqref{eq:T-S}, give that
\[(S-T)\cap\big\{-(\ell-c_{\mathrm{right}}-1),\ldots,\ell-c_{\mathrm{left}}-1\big\}=\emptyset.\]
Applying \cref{lem:difference-avoid-bound}, using the above with \eqref{eq:S-S} and \eqref{eq:T-T}, directly gives the result. If $w$ is a palindrome, we have $S=T$ and $c_{\mathrm{left}}=c_{\mathrm{right}}$. We may apply \cref{lem:difference-avoid-bound} to the sets $S$ and $\emptyset$ with $a=b=c=c_{\mathrm{left}}$ to get that $|S|\leq \frac{n}{c_{\mathrm{left}}}$, which is enough to give the upper bound in this case.
\end{proof}

By combining this with the lower bound, we get that, for every word $w$, at least one of \cref{constr:1d-rep} and \cref{constr:1d-pal} produces a grid that achieves the maximum possible concentration of $w$ in one dimension. In particular, as promised in the discussion around \cref{qn:extremal-exist}, we have the following corollary:

\begin{corollary}\label{cor:1d-extremal-exist}
For every word $w$, there exists a grid $\Gamma$ such that $c_1(w, \Gamma) = C_1(w).$
\end{corollary}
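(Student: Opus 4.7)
The plan is to observe that \cref{cor:1d-extremal-exist} is essentially immediate from the proofs of the matching upper and lower bounds already established for \cref{prop:1d}. Recall that $C_1(w)$ is defined as a supremum of $c_1(w,\Gamma)$ over all $1$-dimensional grids $\Gamma$. The content of the corollary is that this supremum is in fact attained. Since the upper bound in \cref{prop:1d} has just been shown to hold for every $1$-dimensional grid, it suffices to exhibit a single finite grid $\Gamma$ that achieves the corresponding value. This is exactly what \cref{constr:1d-rep} and \cref{constr:1d-pal} supply, and \cref{lem:constr-1d-rep,lem:constr-1d-pal} verify the concentrations.

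Concretely, I would split into the cases of \cref{prop:1d}. If $w$ is not a palindrome, then $C_1(w)$ is the maximum of $\tfrac{1}{\ell - c_{\mathrm{repeat}}}$ and $\tfrac{1}{\ell - (c_{\mathrm{left}}+c_{\mathrm{right}})/2}$. If the first expression achieves this maximum, take $\Gamma = \Gamma_1$ from \cref{constr:1d-rep}; by \cref{lem:constr-1d-rep}, $c_1(w,\Gamma_1) \geq \tfrac{1}{\ell - c_{\mathrm{repeat}}}$, and combined with the upper bound we conclude $c_1(w,\Gamma_1) = C_1(w)$. If instead the second expression achieves the maximum, take $\Gamma = \Gamma_2$ from \cref{constr:1d-pal} and apply \cref{lem:constr-1d-pal} in the same way. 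If both expressions are equal, either construction works (and in fact this overlap is exactly the phenomenon addressed in \cref{prop:1d-structure}\ref{prop:1d-structure c}). If $w$ is a palindrome, then \cref{lem:constr-1d-rep} gives $c_1(w,\Gamma_1) \geq \tfrac{2}{\ell - c_{\mathrm{repeat}}} = C_1(w)$, and again the upper bound forces equality.

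There is no real obstacle here: the constructions produce explicit finite grids of bounded size (at most $2\ell$), so the supremum in \eqref{eq:C-def} is witnessed by one of them. The only thing to check carefully is that the case analysis matches the one in \cref{prop:1d}, which is why I would phrase the proof as a one-line corollary citing the relevant lemmas rather than redoing any construction. In particular, no compactness or limiting argument is needed, in stark contrast to the sort of reasoning that might be required for \cref{qn:extremal-exist} in higher dimensions.
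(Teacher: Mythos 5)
Your proposal is correct and follows exactly the paper's own argument: the paper deduces the corollary by noting that the upper bound of \cref{prop:1d} is matched, in every case, by the explicit grid from \cref{constr:1d-rep} or \cref{constr:1d-pal} (via \cref{lem:constr-1d-rep,lem:constr-1d-pal}), so the supremum is attained. Your case split and the observation that no compactness argument is needed are precisely the content of the paper's one-line proof.
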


\subsection{Characterization of extremal grids}\label{subsec:structure}

In this section, we prove \cref{prop:1d-structure}, characterizing all grids $\Gamma$ for which $c_1(w,\Gamma)=C_1(w)$. Much of the proof will mirror the proof of the upper bound in \cref{prop:1d}. In case (c) below, the argument is quite delicate, and the reader may wish to keep in mind an example, such as $\mathsf{CROC}$.

\begin{proof}[Proof of \cref{prop:1d-structure}] Let $\Gamma$ be a grid of shape $\ZZ/n\ZZ$ satisfying $c_1(w,\Gamma)=C_1(w)$. As before, define
\begin{align*}
    S &= \{\text{leftmost indices of appearances of $w$ reading left-to-right}\}\\
    T &= \{\text{leftmost indices of appearances of $w$ reading right-to-left}\},
\end{align*}
so that $nC_1(w)=|S|+|T|$, and 
\begin{align*}
(S-S)\cap\big\{1,\ldots,\ell-c_{\text{repeat}}-1\big\}&=\emptyset;\\
(T-T)\cap\big\{1,\ldots,\ell-c_{\text{repeat}}-1\big\}&=\emptyset;\\
(S-T)\cap\big\{-(\ell-c_{\mathrm{right}}-1),\ldots,\ell-c_{\mathrm{left}}-1\big\}&=\emptyset\ (\text{if $w$ is not a palindrome}).
\end{align*}
We first treat the case where $w$ is a palindrome. In this case, $S=T$ and $|S|=\frac n{\ell-c_{\mathrm{repeat}}}$. Applying \cref{lem:difference-avoid-bound}(a) to the sets $S$ and $\emptyset$ with $a=\ell-c_{\mathrm{repeat}}$ and $b=c=\ell$ gives that $S$ is a translate of $a\ZZ/n\ZZ$. For each $s\in S$, the $a$ characters in $\Gamma$ at positions $\{s,s+1,\ldots,s+a-1\}$ are the first $a$ characters of $w$ in order. Letting $v$ be the word formed by these $a$ characters, we conclude from the fact that $S$ is a translate of $a\ZZ/n\ZZ$ that $\Gamma\cong\Grid(v^m)$ for some $m$. So, $\Gamma$ is equivalent to the grid given by \cref{constr:1d-rep}, as desired.

We now assume $w$ is not a palindrome. Let $a=\ell-c_{\mathrm{repeat}}$, $b=\ell-c_{\mathrm{left}}$, and $c=\ell-c_{\mathrm{right}}$, so that we may apply \cref{lem:difference-avoid-bound} to the sets $S$ and $T$ with $a$, $b$, and $c$.

\begin{enumerate}[label={(\alph*)}]
    \item If $c_{\mathrm{left}}+c_{\mathrm{right}}<2c_{\mathrm{repeat}}$, then $a<\frac{b+c}2$, and \cref{lem:difference-avoid-bound}(a) gives that one of $\{S,T\}$ is empty and the other is a translate of $a\ZZ/n\ZZ$. We may assume without loss of generality that $T$ is empty; otherwise, replace $\Gamma$ with the grid obtained by reversing the letters in $\Gamma$. Now, as in the palindromic case, $\Gamma$ is formed by repeating the first $a$ letters of $w$, i.e.~$\Gamma\cong\Grid(v^m)$ for some $m$, where $v$ is as in \cref{constr:1d-rep}. So, $\Gamma$ is equivalent to the grid given by \cref{constr:1d-rep}.

    \item If $c_{\mathrm{left}}+c_{\mathrm{right}}>2c_{\mathrm{repeat}}$, then $a>\frac{b+c}2$, and \cref{lem:difference-avoid-bound}(b) gives that $S=T-c$ and both $S$ and $T$ are translates of $(b+c)\ZZ/n\ZZ$. For each $s\in S$, the $c$ characters in $\Gamma$ at positions $\{s,s+1,\ldots,s+c-1\}$ are the first $c$ characters of $w$ in order; since $s+c\in T$, the next $b$ characters are the last $b$ characters of $w$ reversed. In the notation of \cref{constr:1d-pal}, these strings of characters of $w$ are $u_2$ and $u_1^{\mathrm{rev}}$, respectively, and so $\Gamma=\Grid((u_2u_1^{\mathrm{rev}})^m)$ for some positive integer $m$. Thus $\Gamma$ is equivalent to the grid given by \cref{constr:1d-pal}.

    \item If $c_{\mathrm{left}}+c_{\mathrm{right}}=2c_{\mathrm{repeat}}$, then $a=\frac{b+c}2$. We may assume without loss of generality that $c_{\mathrm{left}}\leq c_{\mathrm{repeat}}\leq c_{\mathrm{right}}$, so that $c\leq a\leq b$.     The two grids given by \cref{constr:1d-rep} and \cref{constr:1d-pal} are both extremal by \cref{lem:constr-1d-rep} and \cref{lem:constr-1d-pal}, and they are inequivalent since \cref{constr:1d-rep} contains copies of $w$ in only one direction while \cref{constr:1d-pal} contains them in both directions. Let $v$, $u_1$, and $u_2$ be as in those constructions. 
    
    By \cref{lem:constr-1d-rep}, one of $\{w,vv\}$ is a prefix of the other, and by \cref{lem:constr-1d-pal}, the same holds of $\{w,u_2u_1^{\mathrm{rev}}\}$. If $\ell\geq 2c_{\mathrm{repeat}}$, then $w$ is longer than $u_2u_1^{\mathrm{rev}}$ and $vv$, so $u_2u_1^{\mathrm{rev}}$ and $vv$ are both prefixes of $w$ of the same length. This implies that \cref{constr:1d-rep} and \cref{constr:1d-pal} are equivalent, which cannot be. Therefore, $2c_{\mathrm{repeat}}<\ell$.

    Let $p_{\mathrm{left}}$, $p_{\mathrm{right}}$ and $s$ be as in \cref{constr:1d-rep} and \cref{constr:1d-pal}. Since $c_{\mathrm{left}}\leq c_{\mathrm{repeat}}\leq c_{\mathrm{right}}$, $p_{\mathrm{left}}$ is a prefix of $s$ and $s$ is a suffix of $p_{\mathrm{right}}$. Let $x=p_{\mathrm{left}}$ (a palindrome), and define $y$ so that $s=xy$. We have $\len(p_{\mathrm{right}})=\len(s)+\len(y)$, so $p_{\mathrm{right}}=y^{\mathrm{rev}}xy$. Now, $w$ has $s=xy$ as a prefix and $p_{\mathrm{right}}=y^{\mathrm{rev}}xy$ as a suffix. If $\len(s)+\len(p_{\mathrm{right}})>\ell$, then the substring $y$ from the prefix and the substring $y^{\mathrm{rev}}$ from the suffix overlap, and the first $\ell-\len(y)$ letters of $w$ form a palindrome (since $x$ is a palindrome). This contradicts the definition of $c_{\mathrm{left}}$. Therefore the prefix $s$ and the suffix $p_{\mathrm{right}}$ do not overlap. Therefore there exists a string $z$ for which $w=xyzy^{\mathrm{rev}}xy$. We have
    \[v=xyzy^{\mathrm{rev}},\qquad u_2=xyz,\qquad u_1=yzy^{\mathrm{rev}}xy,\qquad v'=xyz^{\mathrm{rev}}y^{\mathrm{rev}}.\]
    We now have enough structural understanding of these words to conclude the proof using arguments similar to (a) and (b). By \cref{lem:difference-avoid-bound}(c), $S\sqcup(T+a-c)$ is a translate of $a\ZZ/n\ZZ$; translate the grid so $S\sqcup(T+a-c)=a\ZZ/n\ZZ$. For each $s\in S$, the $a$ characters in $\Gamma$ at positions $\{s,s+1,\ldots,s+a-1\}$ are the first $a$ characters of $w$, i.e.~$v$. For each $u\in T+a-c=T+b-a$, the next $a$ characters are the $a$ characters at indices $\{b-a+1,\ldots,b\}$ in $w$ when read in reverse. These are the last $a$ characters of $u_1^{\mathrm{rev}}$, i.e.~$v'$. Therefore, each block of length $a$ in $\Gamma$ starting at a multiple of $a$ is either $v$ or $v'$, and $\Gamma=\Grid(v_1\cdots v_m)$ for some $v_1,\ldots,v_m\in\{v,v'\}$. Moreover, any grid of this form contains one forwards copy of $w$ beginning at the location of each copy of $v$, since both $v$ and $v'$ begin with $xy$ (note that $w=vxy$). Such a grid also contains one backwards copy of $w$ containing each copy of $v'$ as a subword, since both $v'$ and $v$ begin with $x$ and end with $y^{\mathrm{rev}}$ (note that $w^{\mathrm{rev}}=y^{\mathrm{rev}}v'x$). So, $\Grid(v_1\cdots v_m)$ has size $ma$ and contains at least $m$ copies of $w$, meaning that it is a $w$-extremal grid. This concludes the proof. \qedhere
\end{enumerate}
\end{proof}

\begin{remark}
The complexity of the proof of Proposition \ref{prop:1d-structure c} reflects a complexity in the structure of the extremal grids for such words. For instance, the number of inequivalent extremal grids of size $3n$ the word $\mathsf{CROC}$ admits is exponential in $n$, as we may freely choose the relative ordering of each consecutive $\mathsf{R}$ and $\mathsf{O}$. To illustrate, the grid $\Grid(\mathsf{CROCROCOR)}$ is $\mathsf{CROC}$-extremal and not equivalent to the grid given either by \cref{constr:1d-rep} or \cref{constr:1d-pal}. 
\end{remark}

\subsection{Letter distribution in near-extremal grids: a stability result}\label{subsec:stability} 
We are now ready to prove \cref{prop:1d-stable}. We begin by proving the following weaker statement about letter distributions of extremal grids (as opposed to near-extremal grids).

\begin{lemma}\label{lem:same-dist} For any word $w$, there exists a fixed letter distribution $h_w$ such that, for any one-dimensional $w$-extremal grid $\Gamma$, we have $h_\Gamma=h_w$.
\end{lemma}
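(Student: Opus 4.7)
The plan is to invoke the classification of extremal grids from \cref{prop:1d-structure} and check that, within each case, all $w$-extremal grids share a common letter distribution. First, I record a general observation that I would use throughout: letter distribution is preserved under grid equivalence. Indeed, enlarging and contracting multiply both letter counts and total size by a common factor, while reversing, translating, and swapping coordinates are bijections on cells. So it suffices to verify the claim on one representative from each equivalence class of $w$-extremal grids.

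In cases (a) and (b) of \cref{prop:1d-structure}, every $w$-extremal grid is equivalent to a single canonical construction (\cref{constr:1d-rep} or \cref{constr:1d-pal}, respectively), and I simply define $h_w$ to be the letter distribution of that construction. The content of the lemma thus lies entirely in case (c), where the extremal grids form a much larger family: namely all grids of the form $\Grid(v_1 \cdots v_m)$ with each $v_i \in \{v, v'\}$, where \cref{prop:1d-structure} also provides words $x$, $y$, $z$ satisfying $v = xyzy^{\mathrm{rev}}$ and $v' = xyz^{\mathrm{rev}}y^{\mathrm{rev}}$.

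The key observation is that $v$ and $v'$ differ only by the reversal of the middle block $z$, and reversing a word preserves its multiset of letters. Hence $v$ and $v'$ are anagrams of each other, with the same length and the same letter distribution. Consequently every concatenation $v_1 \cdots v_m$ has letter distribution equal to that of $v$, independently of the choices $v_i \in \{v, v'\}$. Taking $h_w$ to be this common distribution completes the proof. I do not expect any real obstacle: essentially all of the difficulty was absorbed by \cref{prop:1d-structure}, and the only new input needed is the elementary anagram observation about $v$ and $v'$ in case (c).
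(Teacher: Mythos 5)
Your proposal is correct and is essentially identical to the paper's proof: both reduce the lemma to \cref{prop:1d-structure}, note that equivalent grids share a letter distribution (handling cases (a) and (b)), and in case (c) observe that $v=xyzy^{\mathrm{rev}}$ and $v'=xyz^{\mathrm{rev}}y^{\mathrm{rev}}$ are anagrams, so every concatenation $\Grid(v_1\cdots v_m)$ has the same distribution.
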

\begin{proof} This is essentially a corollary of \cref{prop:1d-structure}. Note that equivalent grids have the same letter distribution. If $c_{\mathrm{left}}+c_{\mathrm{right}}\neq 2c_{\mathrm{repeat}}$, or if $w$ is a palindrome, then all grids $\Gamma$ with $c_1(w,\Gamma)=C_1(w)$ are equivalent by parts (a) and (b) of the proposition, and so $h_w$ can be chosen to be the letter distribution of any extremal grid. If $c_{\mathrm{left}}+c_{\mathrm{right}}= 2c_{\mathrm{repeat}}<\ell$, then define $v$ and $v'$ as in case (c) of the proposition. Since $v=xyzy^{\mathrm{rev}}$ and $v'=xyz^{\mathrm{rev}}y^{\mathrm{rev}}$ for strings $x$, $y$, and $z$, both $v$ and $v'$ have the same letter distribution. Setting $h_w$ to be this distribution, \cref{prop:1d-structure}(c) gives the result.
\end{proof} 

\begin{proof}[Proof of \cref{prop:1d-stable}] The proof is conceptually similar to the proof of the upper bound of \cref{prop:1d}, and uses some of the same definitions and notation. Say $\Gamma$ has shape $\ZZ/n\ZZ$. Let $\ell=\len(w)$ and $a=\ell-c_{\mathrm{repeat}}$ and $b=\ell-c_{\mathrm{left}}$ and $c=\ell-c_{\mathrm{right}}$. Write $w=w_1w_2\cdots w_\ell$, where $w_1,\ldots,w_\ell$ are letters. Define the sets
\begin{align*}
    S &= \{\text{leftmost indices of appearances of $w$ reading left-to-right}\}\\
    T &= \{\text{leftmost indices of appearances of $w$ reading right-to-left}\}.
\end{align*}
First, we dispatch with the case where $w$ is a palindrome. In this case, $S=T$, $C_1(w)=2/a$, $|S|=n(1-\delta)/a$, and the sets
\[S,S+1,\ldots,S+(a-1)\]
are disjoint. Let $V$ be the union of these sets, so that $|V|=a|S|=n(1-\delta)$. The letters in $\Gamma$ at positions in $V$ (the \emph{letters in} $V$, for brevity) comprise $|S|$ copies of the first $a$ letters of $w$. These $a$ letters have distribution $h_w$. Now, let the distribution of the remaining letters be $h'$. Then $h_\Gamma=(1-\delta)h_w+\delta h'$, so
\[\TV{h_\Gamma-h_w}=\TV{\delta h'-\delta h_w}=\delta\TV{h'-h_w}\leq\delta,\]
as desired.

We may now assume $w$ is not a palindrome. As in the proof of \cref{lem:difference-avoid-bound}, whenever $\max(r,r')\leq a$ and $r+r'\leq b+c$, the $r+r'$ sets
\begin{equation}\label{eq:disjoint}
S-r,\ldots,S-1,T-c,\ldots,T+(r'-c-1)
\end{equation}
are disjoint. We now split into cases.
\begin{enumerate}[label={(\alph*)}]
    \item If $2a\leq b+c$, we may without loss of generality assume $c\geq a$. Here, we have $C_1(w)=1/a$, so
    \[|S|+|T|=nc_1(w,\Gamma)=\frac{n(1-\delta)}a.\]
    Select $r=r'=a$ in \eqref{eq:disjoint}, and add $a$ to each set to get that
    \[S,\ldots,S+(a-1),T+(a-c),\ldots,T+(2a-c-1)\]
    are disjoint. Let $V$ be the union of these sets. The letters in $V$ comprise $|S|$ copies of the first $a$ letters of $w$ and $|T|$ copies of the $a$ letters in $w$ beginning at $w_{\ell-(2a-c)+1}$. (These $a$ letters are all in $w$, since $\ell-(2a-c)\geq \ell-b>0$.) These collections of $a$ letters are both strings of $a$ consecutive letters in $w$. So, by \cref{lem:constr-1d-rep}, they define equivalent grids, and so are cyclic shifts of one another. By \cref{lem:constr-1d-rep} and \cref{lem:same-dist}, each of these strings thus has letter distribution $h_w$. So, the letters in $V$, a subset of $\ZZ/n\ZZ$ with size $|V|=a(|S|+|T|)=(1-\delta)n$, have the distribution $h_w$. We finish as in the palindromic case.

    \item If $2a>b+c$, we can without loss of generality assume $b\geq c$. Here, we have $C_1(w)=2/(b+c)$, so
    \[|S|+|T|=nc_1(w,\Gamma)=\frac{2n(1-\delta)}{b+c}.\]
    Let $r=\lfloor\frac{b+c}2\rfloor$ and $r'=\lceil\frac{b+c}2\rceil$, so that $c\leq r$ and $r'\leq a$. Plugging $(r,r')$ in to \eqref{eq:disjoint} and adding $r$ to each set gives that
    \[S,\ldots,S+(r-1),T+(r-c),\ldots,T+(b-1)\]
    are disjoint. Let $V_1$ be the union of these sets. In the notation of \cref{constr:1d-pal}, the letters in $V_1$ comprise $|S|$ copies of the first $r$ letters of $w$ and $|T|$ copies of the last $r'$ letters of $u_1^{\mathrm{rev}}$, since $u_1^{\mathrm{rev}}$ is formed from the first $b$ letters of $w^{\mathrm{rev}}$. In addition, plugging $(r',r)$ to \eqref{eq:disjoint} and adding $\ell-r+c$ to each set gives that
    \[S+(\ell-b),\ldots,S+(\ell-b+r'-1),T+(\ell-r),\ldots,T+(\ell-1)\]
    are disjoint. Let $V_2$ be the union of these sets; the letters in $V_2$ comprise $|S|$ copies of the first $r'$ letters of $u_1$, i.e.~the last $r'$ letters of $u_1^{\mathrm{rev}}$, and $|T|$ copies of the last $r$ letters of $w^{\mathrm{rev}}$, i.e.~the first $r$ letters of $w$. Note that the first $r$ letters of $w$ and the last $r'$ letters of $u_1^{\mathrm{rev}}$ jointly comprise all of the $r+r'=b+c$ letters in $u_2u_1^{\mathrm{rev}}$.
    
    Let $M$ be the union of the multiset of letters in $V_1$ and the letters in $V_2$. The multiset $M$ consists of $|S|+|T|$ copies of the first $r$ letters of $w$ and $|S|+|T|$ copies of the last $r'$ letters of $u_1^{\mathrm{rev}}$, and thus consists of $|S|+|T|$ copies of the letters in $u_2u_1^{\mathrm{rev}}$. Therefore, the distribution of the letters in $M$ is $h_w$. Moreover, $|M|=(b+c)(|S|+|T|)=2n(1-\delta)$, and each letter in $\Gamma$ corresponds to two letters in $M$. Let the distribution of the remaining letters in $\Gamma\sqcup\Gamma$ be $h'$, so that $h_\Gamma=(1-\delta)h_w+\delta h'$. Then
    \[\TV{h_\Gamma-h_w}=\delta\TV{h'-h_w}\leq\delta.\qedhere\]
\end{enumerate}
\end{proof}

\section{Combinatorial reductions}\label{sec:reductions}

In this section, we describe the combinatorial procedures we use to reduce the question of determining $C_d(w)$ to the question of determining $C_d(w')$ for various pairs $(w,w')$, as described in \cref{subsec:intro-reductions}. We begin with what we call \emph{projection reductions}, our most powerful and general procedure.

\subsection{Projection reductions}\label{subsec:proj-reductions}
Our projection reductions arise from extending the arguments in \cite{alonkravitz}. Before discussing our more general result, we give an example (modified from \cite{alonkravitz}) which illustrates the premise.

\begin{example}\label{ex:cat}
Suppose we know the word $\mathsf{AB}$ is $d$-stackable, and we want to prove the word $\mathsf{CAT}$ is $d$-stackable. For any grid, if we consider ``projecting" the grid by mapping both $\C$ and $\mathsf{T}$ to $\B$ while fixing $\A$, the number of appearances of $\mathsf{CAT}$ in the original grid is at most one-half the number of appearances of $\mathsf{AB}$ in the projected grid, because each appearance of $\mathsf{CAT}$ corresponds to an appearance of each of $\mathsf{AC}$ and $\mathsf{AT}$, both of which project to $\mathsf{AB}$. Moreover, projecting the grid $\Theta = \Grid(\mathsf{CATA}) \times (\Zmod{1})^{d-1}$ in this way gives us a $\mathsf{AB}$-extremal grid, since $\mathsf{AB}$ is known to be $d$-stackable. The ratio of appearances of $\mathsf{CAT}$ in $\Theta$ to appearances of $\mathsf{AB}$ in the projected grid is exactly $1/2$, and so $\Theta$ must be $\mathsf{CAT}$-extremal. Thus $\mathsf{CAT}$ is $d$-stackable. 
\end{example}

The general proposition underlying projection reductions is the following.

\begin{proposition}\label{prop:proj-reductions} Let $w$ be a word with letters from the alphabet $\mathsf\Sigma$. Suppose that there exists a word $w'$ with letters from the alphabet $\mathsf\Sigma'$, a map $\pi\colon\mathsf\Sigma\to\mathsf\Sigma'$ and a one-dimensional grid $\Gamma_0$ with letters in $\mathsf\Sigma$ such that
\begin{enumerate}[label={(\alph*)}]
    \item $\Gamma_0$ is $w$-extremal,
    \item $\pi(\Gamma_0)$ is $w'$-extremal,\footnote{By $\pi(\Gamma_0)$ we mean the grid given by the map $\pi\circ\Gamma_0$. That is, if some letter $A$ occurs at position $p$ in $\Gamma_0$, the letter $\pi(A)$ occurs at position $p$ in $\pi(\Gamma_0)$.} and
    \item for every one-dimensional grid $\Gamma$ with letters from $\mathsf\Sigma$,
    \[\frac{\ct(w,\Gamma)}{\ct(w',\pi(\Gamma))}\leq\frac{\ct(w,\Gamma_0)}{\ct(w',\pi(\Gamma_0))}.\]
\end{enumerate}
Then, for any positive integer $d$,
\[\frac{C_d(w)}{C_1(w)}\leq\frac{C_d(w')}{C_1(w')}.\]
\end{proposition}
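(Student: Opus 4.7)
The plan is to apply hypothesis (c) to each search line of a $d$-dimensional grid and sum. Fix any $d$-dimensional grid $\Gamma$ on $\mathsf\Sigma$. Since every appearance of $w$ in $\Gamma$ lies on exactly one search line, and since applying $\pi$ commutes with the search-line decomposition (the search lines of $\pi(\Gamma)$ are precisely the grids $\pi(\Delta)$, as $\Delta$ ranges over the search lines of $\Gamma$), I would write
\[\ct(w,\Gamma)=\sum_\Delta \ct(w,\Delta),\qquad \ct(w',\pi(\Gamma))=\sum_\Delta \ct(w',\pi(\Delta)).\]

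Next, I would apply hypothesis (c) to each search line $\Delta$, read in its cross-multiplied form
\[\ct(w,\Delta)\cdot\ct(w',\pi(\Gamma_0))\leq\ct(w,\Gamma_0)\cdot\ct(w',\pi(\Delta)),\]
which avoids trouble when $\ct(w',\pi(\Delta))$ vanishes. Summing over all search lines $\Delta$ of $\Gamma$ yields
\[\ct(w,\Gamma)\cdot\ct(w',\pi(\Gamma_0))\leq\ct(w,\Gamma_0)\cdot\ct(w',\pi(\Gamma)).\]
Dividing both sides by $|\Gamma|\cdot|\Gamma_0|$ and invoking hypotheses (a) and (b), which give $\ct(w,\Gamma_0)/|\Gamma_0|=C_1(w)$ and $\ct(w',\pi(\Gamma_0))/|\Gamma_0|=C_1(w')$, converts this into
\[c_d(w,\Gamma)\cdot C_1(w')\leq C_1(w)\cdot c_d(w',\pi(\Gamma))\leq C_1(w)\cdot C_d(w').\]
Finally, taking the supremum over $\Gamma$ produces the desired inequality $C_d(w)/C_1(w)\leq C_d(w')/C_1(w')$.

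No step in this plan looks like a real obstacle: the whole content of the proof is packaged into hypothesis (c), and the $d$-dimensional statement follows simply from the fact that search lines are one-dimensional grids and that $\pi$ commutes with passing to search lines. The only technicality is phrasing (c) as a cross-multiplied inequality so that search lines with no copies of $w'$ in their image under $\pi$ are handled uniformly.
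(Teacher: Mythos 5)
Your proposal is correct and follows essentially the same route as the paper's proof: decompose the count of $w$ in a $d$-dimensional grid over its search lines, apply hypothesis (c) line by line, recombine using the fact that $\pi$ commutes with passing to search lines, and identify the ratio $\ct(w,\Gamma_0)/\ct(w',\pi(\Gamma_0))$ as $C_1(w)/C_1(w')$ via (a) and (b) before taking the supremum. The only difference is cosmetic: you read (c) in cross-multiplied form to handle search lines with $\ct(w',\pi(\Delta))=0$, a degenerate case the paper's proof treats implicitly.
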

\begin{proof} Let $\Theta$ be any $d$-dimensional grid with letters from $\mathsf\Sigma$; we must upper-bound $c_d(w,\Theta)$. Let $\mathcal{S}$ be the set of search lines of $\Theta$. Recall
\[\ct(w,\Theta)=\sum_{\Gamma\in\mathcal S}\ct(w,\Gamma).\]
Write $r=\ct(w,\Gamma_0)/\ct(w',\pi(\Gamma_0))$. Using condition (c), we compute
\begin{align*}
c_d(w,\Theta)
&=\frac1{|\Theta|}\ct(w,\Theta)\\
&=\frac1{|\Theta|}\sum_{\Gamma\in\mathcal S}\ct(w,\Gamma)\\
&\leq\frac1{|\Theta|}\sum_{\Gamma\in\mathcal S}r\ct(w',\pi(\Gamma))\\
&=r\frac{\ct(w',\pi(\Theta))}{|\Theta|}\\
&=r\cdot c_d(w',\pi(\Theta))\leq r\cdot C_d(w').
\end{align*}
By (a) and (b), $r=\frac{C_1(w)}{C_1(w')}$. Rearranging finishes the proof.
\end{proof}

\cref{prop:proj-reductions} provides a way to upper-bound $C_d(w)$ given $C_d(w')$. A corresponding lower bound can often be found by reversing the procedure: finding some $d$-dimensional grid $\Theta'$, extremal for $w'$, which arises as $\pi(\Theta)$ for some grid $\Theta$ satisfying
\[c_d(w,\Theta)=\frac{C_1(w)C_d(w')}{C_1(w')}.\]
If such a pair $(\Theta,\Theta')$ exists, then equality follows in the conclusion of \cref{prop:proj-reductions}, and this is enough to determine $C_d(w)$. For example, in \cref{ex:cat}, equality is demonstrated by taking $\Theta'=\Grid(\mathsf{BABA})\times(\ZZ/1\ZZ)^{d-1}$ and $\Theta=\Grid(\mathsf{CATA})\times(\ZZ/1\ZZ)^{d-1}$.

\subsection{Projection to \texorpdfstring{$\mathsf{ABB}$}{ABB}}\label{subsec:proj-ABB} Our first application of \cref{prop:proj-reductions} will be towards proving \cref{thm:short-words}. We will reduce four of the isomorphism classes of four-letter words to the problem of determining $C_2(\mathsf{ABB})$ (which we solve by other means in \cref{sec:local}). This is encapsulated in the following lemma.

\begin{lemma}\label{lem:ABB-reduction} Let $d$ be a positive integer for which $\mathsf{ABB}$ is $d$-stackable. Then the words $\mathsf{ABCA}$, $\mathsf{ABBC}$, $\mathsf{ABBA}$, and $\mathsf{BABB}$ are $d$-stackable.    
\end{lemma}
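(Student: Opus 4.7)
The plan is to apply \cref{prop:proj-reductions} to each of the four words $w\in\{\mathsf{ABCA},\mathsf{ABBC},\mathsf{ABBA},\mathsf{BABB}\}$ with target word $w'=\mathsf{ABB}$. For each, I will exhibit a letter-alphabet projection $\pi$ and a one-dimensional grid $\Gamma_0$ satisfying conditions (a)--(c) of that proposition. The reduction then yields $C_d(w)/C_1(w)\le C_d(\mathsf{ABB})/C_1(\mathsf{ABB})=3^{d-1}$ (using the hypothesis that $\mathsf{ABB}$ is $d$-stackable), which together with the matching lower bound from \cref{prop:simple-d-bound} establishes $d$-stackability of $w$.

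The concrete choices, whose extremality is verified using \cref{prop:1d,prop:1d-structure}, are as follows. For $\mathsf{ABBA}$ (a palindrome with $c_\mathrm{repeat}=1$, so $C_1=2/3$), take $\Gamma_0=\Grid(\mathsf{ABB})$ and $\pi$ the identity. For $\mathsf{BABB}$, the key observation is the palindromic prefix $\mathsf{BAB}$, which gives $c_\mathrm{left}=3$, $c_\mathrm{right}=2$, $c_\mathrm{repeat}=1$, and hence $C_1=2/3$; take $\Gamma_0=\Grid(\mathsf{BAB})$ (extremal via \cref{constr:1d-pal}) with $\pi$ the identity, noting that $\Grid(\mathsf{BAB})$ is a translate of $\Grid(\mathsf{ABB})$ and therefore $\mathsf{ABB}$-extremal. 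For $\mathsf{ABCA}$ (with $C_1=1/3$), take $\Gamma_0=\Grid(\mathsf{ABC})$ (extremal via \cref{constr:1d-rep}) and let $\pi$ fix $\A,\B$ and send $\C\mapsto\B$, so that $\pi(\Gamma_0)=\Grid(\mathsf{ABB})$. For $\mathsf{ABBC}$ (with $C_1=1/3$), take $\Gamma_0=\Grid(\mathsf{ABBCBB})$ (extremal via \cref{constr:1d-pal}) and let $\pi$ fix $\A,\B$ and send $\C\mapsto\A$, so that $\pi(\Gamma_0)=\Grid(\mathsf{ABBABB})$, which is equivalent to $\Grid(\mathsf{ABB})$ and hence $\mathsf{ABB}$-extremal.

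The bulk of the work is verifying condition (c): that the ratio $\ct(w,\Gamma)/\ct(\mathsf{ABB},\pi(\Gamma))$ is maximized at $\Gamma_0$. For $\mathsf{ABBA}$ and $\mathsf{BABB}$ the ratio attained at $\Gamma_0$ is $1$, and it suffices to prove $\ct(w,\Gamma)\le\ct(\mathsf{ABB},\Gamma)$ for every one-dimensional grid $\Gamma$ on $\{\A,\B\}$; this follows from the injection sending the $w$-appearance at $(p,\bv)$ to its $\mathsf{ABB}$-prefix (for $\mathsf{ABBA}$) or to the $\mathsf{ABB}$-suffix at $(p+\bv,\bv)$ (for $\mathsf{BABB}$). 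For $\mathsf{ABCA}$ and $\mathsf{ABBC}$ the target ratio at $\Gamma_0$ is $1/2$: each $w$-appearance at $(p,\bv)$ in $\Gamma$ produces two distinct $\mathsf{ABB}$-appearances in $\pi(\Gamma)$, one from projecting the prefix triple of $w$ (yielding $\mathsf{ABB}$ at $(p,\bv)$) and one from the suffix triple, which under the chosen $\pi$ projects to $\mathsf{BBA}$ (giving $\mathsf{ABB}$ at $(p+3\bv,-\bv)$). The main obstacle is showing that these contributions are disjoint across distinct $w$-appearances: given any $\mathsf{ABB}$-appearance at $(q,\bu)$ in $\pi(\Gamma)$, the two candidate $w$-preimages---forward at $(q,\bu)$ or backward at $(q+3\bu,-\bu)$---would demand incompatible letter sequences in $\Gamma$ within the $w$-window (the letter at $q+\bu$ in the forward case equals the letter at $q+2\bu$ in the backward case, and vice versa, forcing $\B=\C$ if both were to occur). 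Hence at most one preimage exists, giving $2\ct(w,\Gamma)\le\ct(\mathsf{ABB},\pi(\Gamma))$, as required.
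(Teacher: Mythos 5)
Your proposal is correct and follows essentially the same route as the paper: the same projections ($\C\mapsto\B$ for $\mathsf{ABCA}$, $\C\mapsto\A$ for $\mathsf{ABBC}$, identity for $\mathsf{ABBA}$ and $\mathsf{BABB}$), the same (or equivalent, e.g.\ $\Grid(\mathsf{BAB})$ versus $\Grid(\mathsf{ABB})$) extremal grids $\Gamma_0$, and the same counting for condition (c), which the paper phrases via the intermediate words $\mathsf{ABC}+\mathsf{ACB}$ and $\mathsf{ABB}+\mathsf{CBB}$ while you argue directly by injectivity of the preimage map. One trivial imprecision: your parenthetical locating the conflict at the middle letters (forcing $\B=\C$) is right for $\mathsf{ABCA}$, but for $\mathsf{ABBC}$ the forward and backward preimages instead clash at the window endpoints (forcing $\A=\C$); the incompatibility conclusion you need holds either way.
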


In proving this lemma, we will apply \cref{prop:proj-reductions}. The full strength of the proposition is not especially necessary, as it will turn out in these cases that the inequality in condition (c) is just as easy to verify for $d$-dimensional grids $\Gamma$ as for $1$-dimensional grids. However, we phrase our arguments in this way to highlight the versatility of our projection machinery.

\begin{proof} For each of our four four-letter words $w$, it suffices to show that 
\[\frac{C_d(w)}{C_1(w)}\leq \frac{C_d(\mathsf{ABB})}{C_1(\mathsf{ABB})};\]
if $\mathsf{ABB}$ is $d$-stackable, then the fraction on the right equals $3^{d-1}$, and the result follows from the lower bound of \cref{prop:simple-d-bound}. As a result, it suffices to verify the preconditions of \cref{prop:proj-reductions}.
\begin{enumerate} 
    \item Consider $w=\mathsf{ABCA}$ and $w'=\mathsf{ABB}$, and let $\pi\colon\{\A,\B,\C\}\to\{\A,\B\}$ denote the projection fixing $\A$ and $\B$ and sending $\C$ to $\B$. Let $\Gamma_0$ be the one-dimensional grid of size $3$ given by $\mathsf{ABC}$. We compute using \cref{prop:1d} that
    \[c_1(w,\Gamma_0)=\frac13=C_1(w);\qquad c_1(w',\pi(\Gamma_0))=\frac23=C_1(w').\]
    So, conditions (a) and (b) of \cref{prop:proj-reductions} are satisfied. Moreover, in any grid $\Gamma$,
    \[2\ct(w,\Gamma)\leq\ct(\mathsf{ABC},\Gamma)+\ct(\mathsf{ACB},\Gamma)\leq \ct(w',\pi(\Gamma));\]
    the first inequality is because each appearance of $w$ in $\Gamma$ can be associated to an appearance of $\mathsf{ABC}$ (appearing in the same direction, starting at the starting cell of the appearance of $w$) and a appearance of $\mathsf{ACB}$ (appearing in the opposite direction, starting at the ending cell of the appearance of $w$). This implies that
    \[\frac{\ct(w,\Gamma)}{\ct(w',\pi(\Gamma))}\leq \frac12=\frac{\ct(w,\Gamma_0)}{\ct(w',\pi(\Gamma_0))}.\]
    We have thus verified all of the preconditions of \cref{prop:proj-reductions}.

    \item Consider $w=\mathsf{ABBC}$ and $w'=\mathsf{ABB}$, and let $\pi\colon\{\A,\B,\C\}\to\{\A,\B\}$ denote the projection fixing $\A$ and $\B$ and sending $\C$ to $\A$. Let $\Gamma_0$ be the one-dimensional grid of size $6$ given by $\mathsf{ABBCBB}$. As in (1), we may compute that
    \[c_1(w,\Gamma_0)=\frac26=C_1(w);\qquad c_1(w',\pi(\Gamma_0))=\frac46=C_1(w').\]
    Moreover, with the same reasoning as in (1), in any grid $\Gamma$, we have
    \[2\ct(w,\Gamma)\leq\ct(\mathsf{ABB},\Gamma)+\ct(\mathsf{CBB},\Gamma)=\ct(w',\pi(\Gamma)).\]
    This gives condition (c) of \cref{prop:proj-reductions}.

    \item Consider $w=\mathsf{ABBA}$ and $w'=\mathsf{ABB}$, and let $\pi\colon\{\A,\B\}\to\{\A,\B\}$ be the identity. Let $\Gamma_0$ be the one-dimensional grid of size $3$ given by $\mathsf{ABB}$. We may compute that
    \[c_1(w,\Gamma_0)=\frac23=C_1(w);\qquad c_1(w',\Gamma_0)=\frac23=C_1(w').\]
    In any grid $\Gamma$, we have $\ct(w,\Gamma)\leq\ct(w',\Gamma)$, by associating to each appearance of $\mathsf{ABBA}$ the appearance of $\mathsf{ABB}$ in the same direction with the same starting point. This gives condition (c).

    \item For $w=\mathsf{BABB}$, the proof is the same as in (3) with the same choices of $\Gamma_0$ and $\pi$: the inequality $\ct(w,\Gamma)\leq\ct(w',\Gamma)$ follows from associating to an appearance of $\mathsf{BABB}$ the appearance of $\mathsf{ABB}$ in the same direction beginning at the second letter. \qedhere
\end{enumerate}
\end{proof}

\subsection{\texorpdfstring{$\mathsf{ELEPHANT}$}{ELEPHANT}-type words: projection to \texorpdfstring{$\mathsf{AB}$}{AB}}\label{subsec:proj-AB}
We now return to the question of Alon and Kravitz of words like $\mathsf{ELEPHANT}$, which have no letters that appear at both even and odd indices but no other restrictions. We shall prove \cref{thm:odd-even}, that all such words are $d$-stackable for all $d$.

Before we prove \cref{thm:odd-even}, we first introduce some notation. Fix some word $w$ with each letter appearing only at even indices or only at odd indices. Let $\mathcal{O}$ be the set of letters of $w$ appearing only at odd indices of the word, and $\mathcal{E}$ be the set of letters that appear only at even indices. Let $\mathsf\Sigma=\mathcal O\cup\mathcal E$ be the alphabet from which the letters in $w$ are chosen. Say a grid $\Gamma$ is \emph{parity-respecting} if all of the two-letter words that appear in $\Gamma$ are of the form $OE$ or $EO$ for some $O\in \mathcal{O}$ and some $E\in \mathcal{E}$. (Note that only one-dimensional grids may be parity-respecting.) Let $\pi$ denote the projection map $\mathsf\Sigma\to\{\mathsf O,\mathsf E\}$ mapping every element of $\mathcal O$ (resp.\ $\mathcal E$) to $\mathsf O$ (resp.\ $\mathsf E$). The map $\pi$ induces a map from grids with letters in $\mathsf\Sigma$ to grids with letters in $\{\mathsf O,\mathsf E\}$ given by replacing a letter $x$ with $\pi(x)$; we also denote this map by $\pi$.

The main technical portion of the proof is encapsulated in the following lemma.

\begin{lemma}\label{lem:OE-ratio} Let $w$ be as in the hypothesis of \cref{thm:odd-even}. There exists a one-dimensional grid $\Gamma_0$ such that
\begin{enumerate}[label={(\alph*)}]
    \item $\Gamma_0$ is $w$-extremal,
    \item $\Gamma_0$ is parity-respecting, and
    \item for any one-dimensional grid $\Gamma$, we have
    \begin{equation}
    \label{eq:1d-gamma0-bound}\frac{\ct(w,\Gamma)}{\ct(\mathsf{OE},\pi(\Gamma))}\leq\frac{\ct(w,\Gamma_0)}{\ct(\mathsf{OE},\pi(\Gamma_0))}.
    \end{equation}
\end{enumerate}
\end{lemma}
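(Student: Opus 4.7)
The plan is to first produce $\Gamma_0$ satisfying (a) and (b), and then prove the ratio bound (c) by decomposing $\Gamma$ into parity-respecting sub-grids and applying a linear version of the upper bound in \cref{prop:1d}. I take $\Gamma_0$ to be whichever of \cref{constr:1d-rep} and \cref{constr:1d-pal} is $w$-extremal (as classified by \cref{prop:1d-structure}). The no-even-odd hypothesis on $w$ forces $c_{\mathrm{repeat}} \equiv \ell \pmod 2$ (a prefix--suffix equality $w_i = w_{\ell - k + i}$ requires $\ell - k$ to be even) and forces $c_{\mathrm{left}}, c_{\mathrm{right}}$ both to be odd (a palindromic-prefix equality $w_i = w_{k-1-i}$ requires $k$ to be odd). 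Therefore $\ell - c_{\mathrm{repeat}}$ and $2\ell - c_{\mathrm{left}} - c_{\mathrm{right}}$ are both even, and the two constructions, built from prefixes of $w$ and reverses of suffixes, produce grids whose letters alternate parity types and are thus parity-respecting; this yields (a) and (b).

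Since $\Gamma_0$ is $w$-extremal and parity-respecting, the right-hand side of (c) equals $C_1(w)$, so it suffices to show $\ct(w, \Gamma) \leq C_1(w) \ct(\mathsf{OE}, \pi(\Gamma))$ for every $1$-dimensional grid $\Gamma$. If $\Gamma$ is itself parity-respecting, the bound is immediate from the definition of $C_1(w)$. Otherwise, I decompose $\Gamma$ into maximal runs of consecutive OE-adjacencies, each bounded on either side by a non-OE adjacency. A run of $r_i$ adjacencies corresponds to a parity-respecting linear sub-grid $L_i$ of length $r_i + 1$, and $\sum_i r_i = \ct(\mathsf{OE}, \pi(\Gamma))$. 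Because $w$ is itself parity-respecting, every appearance of $w$ in $\Gamma$ lies inside a single $L_i$. The bound (c) thus reduces to the following linear sub-lemma:
\[
\ct(w, L) \leq C_1(w)(s - 1)
\]
for any parity-respecting linear grid $L$ of length $s$. Summing this over the runs yields $\ct(w, \Gamma) \leq C_1(w) \sum_i r_i = C_1(w) \ct(\mathsf{OE}, \pi(\Gamma))$, as desired.

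The sub-lemma is the main obstacle, and I plan to prove it by emulating the upper-bound proof of \cref{prop:1d}. Writing $S, T \subset \ZZ$ for the sets of forward and backward appearance positions of $w$ in $L$, the avoidance conditions on $(S-S), (T-T), (S-T)$ are the same as in the circular setting (they are consequences of letter-matching and do not use any wraparound structure). The argument of \cref{lem:difference-avoid-bound} then shows that appropriately many translates of $S$ and of $T - c$ (with $c = \ell - c_{\mathrm{right}}$) are pairwise disjoint subsets of $\ZZ$. A direct computation of the smallest integer interval containing all these translates, carried out after reversing $L$ so that $c_{\mathrm{right}} \leq c_{\mathrm{left}}$ WLOG, shows that the interval has size at most $s - 1$, from which $|S| + |T| \leq C_1(w)(s - 1)$ follows. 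A case split between the repeat-dominant and palindrome-dominant regimes of \cref{prop:1d} is needed, with an additional symmetrization step when $c_{\mathrm{left}} + c_{\mathrm{right}}$ is odd, but each case is a direct linear analog of the corresponding circular calculation.
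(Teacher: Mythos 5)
Your proposal is correct, and parts (a)--(b) coincide with the paper's own argument (same two constructions, same parity bookkeeping; one small caveat: your congruence $c_{\mathrm{repeat}}\equiv\ell\pmod 2$ only makes sense when $c_{\mathrm{repeat}}>0$, but this is harmless since when $c_{\mathrm{repeat}}=0$ the palindromic construction is the one chosen). Your proof of (c), however, takes a genuinely different route. The paper never leaves the cyclic setting: it converts an arbitrary grid $\Gamma$ into a parity-respecting cyclic grid $\Gamma'$ by surgery --- deleting letters lying in no appearance of $w$, deleting one letter from each $w_1w_1$ or $w_\ell w_\ell$ adjacency, and (when $\len(w)$ is odd) cutting at the remaining $w_1w_\ell$ adjacencies and replacing each segment $s_i$ by the reflected-and-doubled grid $\Grid(s_i')$ --- checking via the mediant inequality that the ratio $\ct(w,\cdot)/\ct(\mathsf{OE},\pi(\cdot))$ never decreases, after which the parity-respecting case (where the ratio equals $c_1(w,\Gamma')\le C_1(w)$) finishes. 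You instead cut $\Gamma$ along its non-$\mathsf{OE}$ adjacencies into maximal parity-respecting \emph{linear} segments and prove the linear estimate $\ct(w,L)\le C_1(w)(s-1)$ by rerunning the argument of \cref{lem:difference-avoid-bound} over $\ZZ$, with interval lengths replacing the size $n$ of $\Zmod{n}$. Both of your supporting claims are sound: when $\Gamma$ is not fully parity-respecting, no appearance of $w$ can wrap around the cycle, so appearances genuinely localize to segments (and this is exactly why your separate treatment of the fully parity-respecting case is needed); and the interval bookkeeping closes in every case --- with $a=\ell-c_{\mathrm{repeat}}$, $b=\ell-c_{\mathrm{left}}$, $c=\ell-c_{\mathrm{right}}$, the repeat-dominant regime gives disjoint translates inside an interval of size $s-\ell+2a-c\le s-1$ or $s-\ell+c\le s-1$ (using $2a-c\le b\le\ell-1$ and $c\le\ell-1$), the palindrome-dominant regime gives size $s-\ell+b=s-c_{\mathrm{left}}\le s-1$ for both of the two needed inequalities, and the palindromic-word case uses $c_{\mathrm{repeat}}\ge 1$. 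What your route buys is a clean quantitative per-segment bound and no grid surgery; what the paper's route buys is that it reuses the cyclic bound $c_1\le C_1(w)$ entirely as a black box, never re-deriving any additive estimates, at the cost of the more delicate cut-and-reflect construction and the step-by-step verification that the ratio is preserved.
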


We first prove \cref{thm:odd-even} assuming \cref{lem:OE-ratio}.

\begin{proof}[Proof of \cref{thm:odd-even} assuming \cref{lem:OE-ratio}]
Let $\Gamma_0$ be as in the conclusion of \cref{lem:OE-ratio}. Since $\Gamma_0$ is parity-respecting, $\pi(\Gamma_0)$ is $\mathsf{OE}$-extremal. Letting $w'=\mathsf{OE}$, conditions (a) and (c) of \cref{prop:proj-reductions} follow exactly from conclusions (a) and (c) of \cref{lem:OE-ratio}. Therefore, for any dimension $d$, we have \[\frac{C_d(w)}{C_1(w)} \le \frac{C_d(\mathsf{OE})}{C_1(\mathsf{OE})}.\]
Since $\mathsf{OE}$ is $d$-stackable by \cref{thm:AK-restated}, we conclude that $C_d(w) \le 3^{d-1} C_1(w)$ and $w$ is also $d$-stackable.
\end{proof}

What remains is to prove \cref{lem:OE-ratio}.

\begin{proof}[Proof of \cref{lem:OE-ratio}] We let $\Gamma_0$ be one of the grids constructed in \cref{constr:1d-rep} and \cref{constr:1d-pal}; specifically, we choose \cref{constr:1d-rep} if $\frac{c_{\mathrm{left}}+c_{\mathrm{right}}}2<c_{\mathrm{repeat}}$ and \cref{constr:1d-pal} otherwise. It is clear from this choice, using \cref{prop:1d}, that (a) is satisfied. 

We next show that (b) is satisfied. We split into cases based on from which construction $\Gamma_0$ originated:
\begin{itemize}
    \item If $\Gamma_0$ is given by \cref{constr:1d-rep}, we must have $c_{\mathrm{repeat}}>0$. Recall that $w=vs$, where $s$ is both a prefix and suffix of $w$ of length $c_{\mathrm{repeat}}$, and that $\Gamma_0$ is given by the letters in $v$. If $\len(v)$ is odd, then the first letter of $s$ occurs at both an even and odd index in $w$. The grid $\Gamma_0$ must have even size, since otherwise the first letter of $w$ would be an element of $\mathcal{O} \cap \mathcal{E}$. Therefore, $\len(v)$ is even. Since every letter of $v$ appears either only at even indices or only at odd indices, the letters of this grid alternate between $\mathcal{O}$ and $\mathcal{E}$ and so $\Gamma_0$ is parity-respecting.

    \item If $\Gamma_0$ is given by \cref{constr:1d-pal}, recall that $w=p_{\mathrm{left}}u_1=u_2p_{\mathrm{right}}$, where $p_{\mathrm{left}}$ and $p_{\mathrm{right}}$ are palindromes, and $\Gamma_0$ is given by the sequence of letters $u_2u_1^{\mathrm{rev}}$. We first observe that the palindromes $p_\mathrm{left}$ and $p_\mathrm{right}$ must both have odd length, as otherwise they would contain letters at both even and odd indices. (They are nonempty since a single letter is a palindrome.)
    
    Now, if $\len(w)$ is even, both $u_1$ and $u_2$ are of odd length. The subword $u_2$ then begins and ends at an odd index in $w$, and $u_1$ begins and ends at an even index. Therefore $\pi(u_2u_1^{\mathrm{rev}})=\mathsf{OE}^k$ for $k=\frac{\len(u_1)+\len(u_2)}2$, and so $\Gamma_0$ is parity-respecting. If, on the other hand, $\len(w)$ is odd, both $u_1$ and $u_2$ are of even length. The subword $u_2$ of $w$ begin at an odd index and ends at an even index, while the subword $u_1$ begins at an even index and ends at an odd index. The conclusion is the same as before.
\end{itemize}

We now show (c). Let $\Gamma$ be any one-dimensional grid. If $\Gamma$ is parity-respecting, then $\ct(\mathsf{OE}, \pi(\Gamma)) = |\Gamma|$, and so
\[\frac{\ct(w,\Gamma)}{\ct(\mathsf{OE},\pi(\Gamma))}=\frac{\ct(w,\Gamma)}{|\Gamma|}=c_1(w,\Gamma)\leq c_1(w,\Gamma_0)=\frac{\ct(w,\Gamma_0)}{\ct(\mathsf{OE},\pi(\Gamma_0))}.\]
It thus suffices to show that, for any one-dimensional grid $\Gamma$, there exists a parity-respecting grid $\Gamma'$ satisfying
\[\frac{\ct(w,\Gamma)}{\ct(\mathsf{OE},\pi(\Gamma))}\leq\frac{\ct(w,\Gamma')}{\ct(\mathsf{OE},\pi(\Gamma'))}.\]
We will construct such a $\Gamma'$ from $\Gamma$. At each step, we will take care that $f(\Gamma):=\frac{\ct(w,\Gamma)}{\ct(\mathsf{OE},\pi(\Gamma))}$ does not decrease. Write $w=w_1\cdots w_\ell$.
\begin{enumerate}
    \item First, eliminate from $\Gamma$ all of the letters which are part of no appearance of $w$. Note that eliminating a letter can never increase the number of appearances of words of the form $\mathsf{OE}$ in the grid. So, the numerator of $f(\Gamma)$ does not change, while the denominator does not increase. Call the resulting grid $\Gamma_1$.

    \item Second, consider the appearances of $\mathsf{EE}$ or $\mathsf{OO}$ in $\pi(\Gamma_1)$. Since every letter of $\Gamma_1$ is part of an appearance of $w$, and $\pi(w)$ contains neither $\mathsf{OO}$ nor $\mathsf{EE}$, both of these letters must be the ends or starts of appearances of $w$. Therefore every two-letter word appearing in the grid not of the form $\mathsf{OE}$ or $\mathsf{EO}$ is of the form $w_1w_\ell$, $w_\ell w_1$, $w_1w_1$, or $w_\ell w_\ell$. From every occurrence of $w_1w_1$ or $w_\ell w_\ell$, delete one of the two letters. Call the resulting grid $\Gamma_2$. We have $f(\Gamma_1)=f(\Gamma_2)$, and the only remaining two-letter words not of the form $\mathsf{OE}$ or $\mathsf{EO}$ in $\pi(\Gamma_2)$ arise from two-letter words of the form $w_1w_\ell$ or $w_\ell w_1$.

    \item If $\ell$ is even, then $\Gamma_2$ is parity-respecting, and we are done. Otherwise, we ``insert dividers" into $\Gamma_2$ between each consecutive occurrence of $w_1$ and $w_\ell$, splitting the letters in $\Gamma_2$ into words $s_1,\ldots,s_k$, each beginning and ending with an element of $\{w_1,w_\ell\}$. Now, for each $s_i$, let $s'_i$ be the word of length $2\len(s_i)-2$ constructed by overlapping $s_i$ and $s_i^{\mathrm{rev}}$ at their first and last letters, and let $\Gamma_2^{(i)} = \Grid(s'_i)$.\footnote{For example, if $s_i=\mathsf{ABABC}$, then $\Gamma_2^{(i)} =  \Grid(\mathsf{ABABCBAB})$.} We compute
    
    \begin{align*}
    \sum_{i=1}^k\ct(w,\Gamma_2^{(i)})&=2\sum_{i=1}^k\ct(w,s_i)=2\ct(w,\Gamma_2);\\
    \sum_{i=1}^k\ct\big(\mathsf{OE},\pi(\Gamma_2^{(i)})\big)&=2\sum_{i=1}^k\ct(\mathsf{OE},\pi(s_i))=2\ct(\mathsf{OE},\pi(\Gamma_2)).
    \end{align*}
    Using that
    \[\frac{\beta_1+\cdots+\beta_k}{\gamma_1+\cdots+\gamma_k}\leq\max\left(\frac{\beta_1}{\gamma_1},\ldots,\frac{\beta_k}{\gamma_k}\right)\]
    whenever $\beta_1,\ldots,\beta_k\geq 0$ and $\gamma_1,\ldots,\gamma_k>0$, we conclude that
    \[f(\Gamma_2)\leq\max_{1\leq i\leq k}f\big(\Gamma_2^{(i)}\big).\]
    Set $\Gamma':=\Gamma_2^{(i)}$ for some $i$ which maximizes $f\big(\Gamma_2^{(i)}\big)$.
\end{enumerate}

We have $f(\Gamma)\leq f(\Gamma')$, and that the letters of $\pi(\Gamma')$ alternate $\mathsf O$ and $\mathsf E$. So, $\Gamma'$ is parity-respecting, and thus $f(\Gamma')\leq f(\Gamma_0)$. This finishes the verification of (c).
\end{proof}

\subsection{Words like \texorpdfstring{$\A^k\B^k$}{AkBk}}\label{subsec:proj-k-rep}
We can prove \cref{thm:AkBk} by using methods similar to our proof of \cref{prop:proj-reductions}. Given a word $w$, write $w^{(k)}$ for the word formed by repeating each letter of $w$ $k$ times.\footnote{For example, if $w=\mathsf{AABC}$, then $w^{(2)}=\mathsf{AAAABBCC}$.}

\begin{proof}[Proof of \cref{thm:AkBk}]
    Recall our assumption that $w$ is $d$-stackable; we must show that $w^{(k)}$ is also $d$-stackable.

    Let $\Gamma$ be any $d$-dimensional grid of shape $\prod_{i=1}^d \Zmod{n_i}$. Replace $\Gamma$ by an equivalent larger grid if necessary so that $k \mid n_i$ for all $i$. For every vector $\bv = (v_1, \dots, v_k)$ in $\{0, \ldots, k-1\} ^d$, let $\Gamma_{\bv}$ be the grid of size $\frac{1}{k^d} |\Gamma|$ and shape $\prod_{i=1}^d \Zmod{(n_i/k)}$ given by $\Gamma_\mathbf{v}(z_1, \dots, z_d) = ((z_1-v_1)/k, \dots, (z_d-v_d)/k)$.

    Now, each appearance of $w^{(k)}$ with at least one cell equivalent to $\bv\bmod k$ in $\Gamma$ corresponds to exactly one appearance of $w$ in $\Gamma_{\bv}$, and this correspondence is injective. Moreover, each appearance of $w^{(k)}$ in $\Gamma$ has cells equivalent to $k$ elements of $\{0, \ldots, k-1\}^d$, as each appearance lies in a single line. Therefore we have 
    \[\ct(w^{(k)}, \Gamma) \le \frac{1}{k} \sum_{\bv} \ct(w, \Gamma_{\bv})\le \frac{1}{k} \sum_{\bv} C_d(w) |\Gamma_{\bv}| = \frac{1}{k} C_d(w) |\Gamma|.\]
    Since $\frac1kC_d(w)$ is the concentration of $w^{(k)}$ in some grid constant in all but one coordinate (by the $d$-stackability of $w$), we have that some such grid is $w^{(k)}$-extremal. From this, the $d$-stackability of $w^{(k)}$ follows.
    \end{proof}

    Though this proof doesn't follow \cref{prop:proj-reductions} exactly, it is closely related. Instead of bounding $\ct(w^{(k)}, d, \Gamma)$ in terms of the count of some word $w'$ in a grid of the form $\pi(\Gamma)$, we bounded $\ct(w^{(k)}, d, \Gamma)$ in terms of the count of $w$ across many grids $\Gamma_{\bv}$.

    More broadly, this is not the only possible application of the general idea of \cref{prop:proj-reductions}. We have not attempted to determine the broadest possible generalization, nor have we attempted to classify all the cases for which we can answer \cref{qn:main} using \cref{prop:proj-reductions} or a close variant together with our other results.

\section{Local analysis}\label{sec:local}

In this section, we will provide a ``local'' method to give upper-bound $C_d(w)$. We begin by giving a simple example (a very special case of \cref{thm:AK-restated}) to demonstrate the method. Next, we present the general form of the method, and finally we provide specific applications, computing $C_d(w)$ for some pairs $(w,d)$ which our other methods are not able to cover.

\subsection{Toy application: the word \texorpdfstring{$\mathsf{AB}$}{AB}}\label{subsec:local-AB}

To exemplify our technique, we give an elementary proof of the result of \cite{alonkravitz} that $\mathsf{AB}$ is $2$-stackable. (The existence of such an elementary proof was noted in \cite{alonkravitz}.)

\begin{lemma}\label{lem:AB-elementary} It holds that $C_2(\mathsf{AB})=3$.
\end{lemma}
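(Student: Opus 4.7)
The lower bound is immediate: applying \cref{prop:1d} to $w=\mathsf{AB}$ (with $\ell=2$, $c_{\mathrm{left}}=c_{\mathrm{right}}=1$, and $c_{\mathrm{repeat}}=0$) gives $C_1(\mathsf{AB})=1$, and then the lower bound in \cref{prop:simple-d-bound} yields $C_2(\mathsf{AB})\geq 3C_1(\mathsf{AB})=3$.

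For the upper bound, the plan is a local double-counting argument over $2\times 2$ subblocks, serving as a model for the general local method developed in the rest of \cref{sec:local}. Fix a two-dimensional grid $\Gamma$ of shape $G$ and write $P_h,P_v,P_d,P_{ad}$ for the numbers of appearances of $\mathsf{AB}$ in $\Gamma$ in the horizontal, vertical, diagonal, and anti-diagonal unsigned directions respectively, so that $c_2(\mathsf{AB},\Gamma)=(P_h+P_v+P_d+P_{ad})/|G|$. For each cell $p\in G$ let $\mathsf{Sq}(p)$ denote the $2\times 2$ toroidal block $\{p+(i,j):i,j\in\{0,1\}\}$. The key local observation is that any labeling of the four cells of such a block produces at most $4$ appearances of $\mathsf{AB}$: any two of the four cells differ by a vector in $\{-1,0,1\}^2\setminus\{\zero\}$, so each unordered pair of cells in the block with differing letters contributes exactly one $\mathsf{AB}$ appearance, and the total number of such pairs is at most (the number of $\mathsf{A}$'s in the block)$\cdot$(the number of $\mathsf{B}$'s in the block)$\leq 2\cdot 2=4$.

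Next I will complete an incidence count between $2\times 2$ blocks and unordered adjacent cell pairs: each horizontal or vertical pair lies in exactly two blocks $\mathsf{Sq}(p)$, whereas each diagonal or anti-diagonal pair lies in exactly one. Summing the ``at most $4$'' bound over the $|G|$ blocks $\{\mathsf{Sq}(p):p\in G\}$ therefore gives
\[
2P_h+2P_v+P_d+P_{ad}\leq 4|G|.
\]
Applying the one-dimensional bound $C_1(\mathsf{AB})=1$ to each diagonal and anti-diagonal search line, and using that these two families of search lines each cover all of $G$, yields $P_d+P_{ad}\leq 2|G|$. Adding these two inequalities gives $2(P_h+P_v+P_d+P_{ad})\leq 6|G|$, hence $c_2(\mathsf{AB},\Gamma)\leq 3$; taking the supremum over $\Gamma$ completes the proof.

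The only genuine subtlety is that the $2\times 2$-block inequality alone weighs the axis-aligned directions twice as heavily as the diagonal ones; supplementing it with the cheaper one-dimensional bound on precisely the two underweighted directions restores the symmetry and recovers the tight bound of $3$. This interplay between a local combinatorial maximum over a small region and a global one-dimensional bound is exactly the linear-programming philosophy that \cref{sec:local} will formalise.
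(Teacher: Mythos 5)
Your argument is correct in substance, and it takes a genuinely different route from the paper's. The paper's proof is purely local: it works with three-cell L-shaped configurations $\{\bu,\bu+\be_1,\bu+\be_2\}$, observes that among three cells on a two-letter alphabet at most two of the three pairs can be distinct (with the diagonal pair weighted twice, as in \eqref{eq:AB-local-bound}), and then averages the resulting inequality over the two orientations of the L so that all eight signed directions receive equal weight; this corresponds exactly to the weight function of \cref{fig:AB-2d-wt}, and no one-dimensional input is needed for the upper bound. You instead use $2\times 2$ blocks with uniform weights, accept that this over-weights the axis directions (giving $2P_h+2P_v+P_d+P_{ad}\le 4|G|$), and then restore the balance by importing the one-dimensional bound $P_d+P_{ad}\le 2|G|$ along the diagonal search lines --- the same search-line averaging used in the upper bound of \cref{prop:simple-d-bound}. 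Both proofs are instances of the linear-programming philosophy of \cref{sec:local}; yours is a hybrid local/one-dimensional argument, the paper's is a single local certificate. The lower bounds are identical.

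There is one step you need to repair: your two counting claims fail for grids with a side of length at most $2$, because of wraparound. In a grid of shape $\Zmod{2}\times\Zmod{2}$, a horizontally adjacent pair with distinct letters yields \emph{two} appearances of $\mathsf{AB}$ (the directions $\be_1$ and $-\be_1$ reach the same cell), every pair of cells lies in all four blocks rather than in one or two, and a single block can contain $8$ appearances rather than $4$ (take $\A$ at $(0,0)$ and $(1,1)$, $\B$ elsewhere). So both ``at most $4$ appearances per block'' and the incidence count $2P_h+2P_v+P_d+P_{ad}$ are not justified as stated for small shapes. The fix is standard and available in the paper's framework: first pass to an equivalent enlarged grid, which preserves the concentration, so that both side lengths are at least $3$; this is exactly the reduction made at the start of the proof of \cref{prop:local} (``we may assume that each $n_i$ is much larger than $\lvert\supp F\rvert$''). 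It is worth noting that the paper's own L-shaped inequality \eqref{eq:AB-local-bound} holds verbatim in any toroidal grid, so the paper's proof never needs this reduction; your block-based variant does.
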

\begin{proof} Since $C_1(\mathsf{AB}) = 1$, it suffices to show that $C_2(\mathsf{AB})\le 3$. Let $\Gamma$ be a grid on the alphabet $\{\A,\B\}$ of shape $\Zmod{m}\times\Zmod{n}$; we will show that $\ct(\mathsf{AB},\Gamma)=|\mathcal A(\mathsf{AB},\Gamma)|\leq 3mn$. Write $\be_1=(1,0)$ and $\be_2=(0,1)$.

We now produce a ``local'' bound on appearances of $\mathsf{AB}$. Fix some $\bu\in\Zmod{m}\times\Zmod{n}$, and consider the three letters $\Gamma(\bu)$, $\Gamma(\bu+\be_1)$, and $\Gamma(\bu+\be_2)$. Any pair of these letters which is distinct gives rise to one appearance of $\mathsf{AB}$ in $\Gamma$. Since the three cannot all be distinct, we have
\begin{align}\notag\big|\mathcal A(\mathsf{AB},\Gamma)&\cap\{(\bu,\be_1),(\bu,\be_2),(\bu+\be_1,-\be_1),(\bu+\be_2,-\be_2)\}\big|\\
&+2\big|\mathcal A(\mathsf{AB},\Gamma)\cap\{(\bu+\be_1,\be_2-\be_1),(\bu+\be_1,\be_1-\be_2)\}\big|\leq 3.\label{eq:AB-local-bound}
\end{align}
(The left side is $0$ if all three of the letters are the same, $2$ if the two letters diagonally adjacent are the same and the third, $\Gamma(\bu)$, is different, and $3$ otherwise.) For each $\bv\in\{-1,0,1\}^2\setminus\{\zero\}$, let $N_\bv$ be the number of appearances of $\mathsf{AB}$ in $\Gamma$ reading in the direction of $\bv$. Summing \eqref{eq:AB-local-bound} over all $\bu\in\Zmod{m}\times\Zmod{n}$ gives
\begin{equation}\label{eq:AB-local-bound-total-1}
N_{\be_1}+N_{\be_2}+N_{-\be_1}+N_{-\be_2}+2N_{\be_2-\be_1}+2N_{\be_1-\be_2}\leq 3mn.
\end{equation}
The same argument works identically if $\be_1$ is replaced by $-\be_1$:
\begin{equation}\label{eq:AB-local-bound-total-2}
N_{\be_1}+N_{\be_2}+N_{-\be_1}+N_{-\be_2}+2N_{\be_1+\be_2}+2N_{-\be_1-\be_2}\leq 3mn.
\end{equation}
Averaging \eqref{eq:AB-local-bound-total-1} and \eqref{eq:AB-local-bound-total-2} gives
\[\ct(\mathsf{AB},\Gamma)=\sum_{\bv\in\{-1,0,1\}^2\setminus\{\zero\}}N_{\bv}\leq 3mn.\qedhere\]
\end{proof}

\subsection{General method}

The proof of \cref{lem:AB-elementary} can be generalized to the following.

\begin{proposition}\label{prop:local} For each $1\leq j\leq d$, let $V_j$ be the set of $\bv\in\{-1,0,1\}^d\setminus\{\zero\}$ satisfying $\|\bv\|_2=\sqrt j$.

Let $w$ be a word of length $\ell$, let $d$ be a positive integer, let $K,M>0$ be real numbers, and let $F\colon\ZZ^d\times(\{-1,0,1\}^d\setminus\{\zero\})\to\RR$ be a weight function with finite support such that
\begin{enumerate}[label={(\roman*)}]
    \item for each $1\leq j\leq d$, we have
    \[\frac1{|V_j|}\sum_{\bv\in V_j}\sum_{p\in\ZZ^d}F(p,\bv)=K;\]
    \item for every infinite grid $\mathcal{G}$ of shape $\ZZ^d$, we have
    \[\sum_{(p, \bv) \in \mathcal A(w, \mathcal{G})}F(p,\bv)\leq M.\]
\end{enumerate}
Then $C_d(w)\leq M/K$.    
\end{proposition}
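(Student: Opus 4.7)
The plan is to combine a translation-averaging argument with an isometry-symmetrization of the weight function, turning the local hypothesis (ii) into a global bound on $c_d(w,\Gamma)$.

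First, I would symmetrize $F$ over the natural action of the signed coordinate-permutation group $S$ of $\ZZ^d$ (acting both on $\ZZ^d$ and on $\{-1,0,1\}^d\setminus\{\zero\}$, where its orbits on the latter set are precisely $V_1,\dots,V_d$). Setting
\[
\widetilde F(p,\bv):=\frac1{|S|}\sum_{\sigma\in S}F(\sigma^{-1}p,\sigma^{-1}\bv),
\]
hypothesis (i) then strengthens to $\sum_{p\in\ZZ^d}\widetilde F(p,\bv)=K$ for \emph{every} direction $\bv$. The condition (ii) is also preserved: for any infinite grid $\mathcal G$ and $\sigma\in S$, the precomposition $\mathcal G\circ\sigma$ is again an infinite grid, so applying (ii) to $\mathcal G\circ\sigma$ and re-indexing gives the bound $M$ for the $\sigma$-twist of $F$; averaging over $\sigma$ yields $\sum_{(p,\bv)\in\mathcal A(w,\mathcal G)}\widetilde F(p,\bv)\le M$.

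Next, fix $\Gamma$ of shape $G=\prod_{i=1}^d\Zmod{n_i}$ and let $\mathcal G\colon\ZZ^d\to\mathsf\Sigma$ be the corresponding ($\Lambda$-periodic, with $\Lambda:=\prod_{i=1}^dn_i\ZZ$) infinite grid. For each $t\in\ZZ^d$ the translate $\mathcal G_t(x):=\mathcal G(x+t)$ is again an infinite grid, so applying the symmetrized version of (ii) and re-indexing $q=p+t$ gives
\[
\sum_{(q,\bv)\in\mathcal A(w,\mathcal G)}\widetilde F(q-t,\bv)\le M.
\]
I would then sum this over $t$ in the fundamental domain $D:=\{0,\dots,n_1-1\}\times\cdots\times\{0,\dots,n_d-1\}$ of $\Lambda$, yielding $|G|\,M$ on the right-hand side.

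To evaluate the left side, group the appearances $(q,\bv)\in\mathcal A(w,\mathcal G)$ by the underlying appearance $(p,\bv)\in\mathcal A(w,\Gamma)$ they lift; fixing a representative $p$, the other lifts are $(p+g,\bv)$ for $g\in\Lambda$. The key bookkeeping observation is that $(g,t)\mapsto g-t$ is a bijection $\Lambda\times D\to\ZZ^d$ (for each $s\in\ZZ^d$ there is a unique $t\in D$ with $t\equiv -s\pmod G$, which then determines $g$), so
\[
\sum_{g\in\Lambda}\sum_{t\in D}\widetilde F(p+g-t,\bv)=\sum_{s\in\ZZ^d}\widetilde F(s,\bv)=K.
\]
Interchanging the order of summation gives $K\cdot\ct(w,\Gamma)\le|G|\,M$, i.e.\ $c_d(w,\Gamma)\le M/K$; taking the supremum over $\Gamma$ completes the proof. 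The only slightly delicate point is this bijection, which is what makes translation-averaging count each appearance of $w$ in $\Gamma$ exactly once with weight $K$; the role of symmetrization is simply to make the ``per-direction mass'' of $\widetilde F$ constant on each $V_j$, so that this uniform weight $K$ appears at all.
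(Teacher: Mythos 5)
Your proof is correct and follows essentially the same route as the paper: both arguments average hypothesis (ii) over the point group $(\ZZ/2\ZZ)^d\rtimes S_d$ and over translations, then use hypothesis (i) to show that each appearance of $w$ in $\Gamma$ acquires total weight exactly $K$, giving $K\cdot\ct(w,\Gamma)\leq M|\Gamma|$. The differences are purely organizational: you symmetrize $F$ first and then translate-average using the bijection $\Lambda\times D\to\ZZ^d$, whereas the paper performs both averages simultaneously inside one expectation over $G\times\Pi$ (and accordingly passes to an equivalent grid with each $n_i$ large compared to $|\supp F|$, a reduction your fundamental-domain bookkeeping sidesteps).
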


Before proving \cref{prop:local}, we make some remarks about its content. The rather peculiar form of (i) is due to the fact that the different ``types'' of directions in $\{-1,0,1\}^d\setminus\{\zero\}$ (i.e.~the equivalence classes modulo symmetries of $\ZZ^d$) are weighted by $|V_j|$ in $\ct(w,\Gamma)$ (so that each individual direction has equal weight). In the proof of \cref{lem:AB-elementary}, this corresponds to the doubling of the contribution of the ``diagonally adjacent'' appearances in \eqref{eq:AB-local-bound}, so that when averaging \eqref{eq:AB-local-bound-total-1} and \eqref{eq:AB-local-bound-total-2} each direction comes with the same weight. Condition (ii) is simply a generalization of \eqref{eq:AB-local-bound}.

When applying this proposition, it is useful to note that the expression in condition (ii) depends only on the letters assigned to cells in
\[\bigcup_{(p,\bv)\in\supp F}\{p,p+\bv,\ldots,p+(\ell-1)\bv\},\]
where $\ell=\len(w)$. In particular, since $F$ has finite support, verifying (ii) is a finite problem.

\begin{proof}[Proof of \cref{prop:local}] Let $\Gamma$ be a grid in $d$ dimensions of shape $G=\prod_{i=1}^d\Zmod{n_i}$. Without loss of generality (by passing to an equivalent grid), we may assume that each $n_i$ is much larger than $\lvert\supp F\rvert$. We shall average (ii) over all isometric images (translates and rotates) of $F$ within $\Gamma$. Each appearance of $w$ in $\Gamma$ will be counted with weight exactly $K/|\Gamma|$; the fact that this average is at most $M$ will give that $C_d(w,\Gamma)\leq M/K$.

Let $\Pi\cong(\ZZ/2\ZZ)^d\rtimes S_d$ be the group of isometries of $\RR^d$ which fix $\zero$ and preserve $\ZZ^d$, i.e.~the symmetries of $\ZZ^d$ as a lattice. Define the quantity
\[T:=\EE_{\bu\in G}\EE_{\pi\in\Pi}\sum_{(p,\bv)\in\mathcal A(w,\Gamma)}F(\pi(p)-\bu,\pi(\bv)).\]
On one hand, by considering the infinite grid $\mathcal G$ given by $\mathcal G(x)=\Gamma(\pi(x)-\bu)$, we have
\[\sum_{(p,\bv)\in\mathcal A(w,\Gamma)}F(\pi(p)-\bu,\pi(\bv))=\sum_{\substack{(\pi(p)-\bu,\pi(\bv))\\\in\mathcal A(w,\mathcal G)}}F(\pi(p)-\bu,\pi(\bv))=\sum_{(p',\bv')\in\mathcal A(w,\mathcal G)}F(p',\bv')\leq M,\]
so $T\leq M$. On the other hand,
\begin{align*}
T
&=\sum_{(p,\bv)\in\mathcal A(w,\Gamma)}\EE_{\pi\in\Pi}\EE_{\bu\in G}F(\pi(p)-\bu,\pi(\bv))\\
&=\sum_{(p,\bv)\in\mathcal A(w,\Gamma)}\EE_{\pi\in\Pi}\EE_{\bu'\in G}F(\bu',\pi(\bv))&(\text{set $\bu'=\pi(p)-\bu$})\\
&=\sum_{(p,\bv)\in\mathcal A(w,\Gamma)}\EE_{\bu'\in G}\EE_{\bv'\in V_{\|\bv\|_1}}F(\bu',\bv')&\begin{array}{r}\text{(the distribution of $\pi(\bv)$ as $\pi$\ \ }\\\text{ varies is uniform in $V_{\|\bv\|_1}$)}\end{array}\\
&=\frac1{|\Gamma|}\sum_{(p,\bv)\in\mathcal A(w,\Gamma)}\left(\frac1{|V_{\|\bv\|_1}|}\sum_{\bv'\in V_{\|\bv\|_1}}\sum_{\bu'\in G}F(\bu',\bv')\right)\\
&=\frac1{|\Gamma|}\sum_{(p,\bv)\in\mathcal A(w,\Gamma)}K=\frac{K\ct(w,\Gamma)}{|\Gamma|}=K\cdot C_d(w,\Gamma).
\end{align*}
We conclude that $C_d(w,\Gamma)\leq M/K$; taking the supremum over $\Gamma$ gives the result.
\end{proof}

When $d=2$, we will represent and define these weight functions $F$ with diagrams. For each pair $(p,\bv)$ in the support of $F$, we draw an arrow beginning at the point $p$ in the direction of $\bv$, labeled by $F(p,\bv)$. See \cref{fig:AB-2d-wt} for an example of such a diagram, which defines the same function $F$ as used in \cref{lem:AB-elementary}.

\begin{figure}
\begin{tikzpicture}[scale=0.7]
    \localDiagram{0/0/1/0/1/below,
                0/0/0/1/1/left,
                1/0/-1/0/1/below,
                0/1/0/-1/1/left,
                1/0/-1/1/2/above right,
                0/1/1/-1/2/above right}
\end{tikzpicture}
\caption{The weight function used implicitly in the proof of \cref{lem:AB-elementary} to upper-bound $C_2(\mathsf{AB})$.}
\label{fig:AB-2d-wt}
\end{figure}
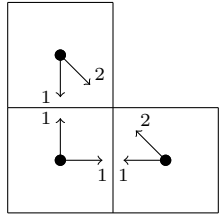

\begin{remark}[Elementary proofs in higher dimensions]
In \cite{alonkravitz}, Alon and Kravitz remark that the computation of $C_2(\mathsf{AB})$ can be done in an elementary way, but that their elementary approach does not generalize to higher dimensions. Using \cref{prop:local}, we can partially remedy this: for \emph{each} dimension $d$, we can reduce the proof that $C_d(\mathsf{AB})=3^{d-1}$ to a finite computation.

Consider the weight function given by
\[F(p,\bv)=\begin{cases}2^{\|\bv\|_1}&p,p+\bv\in\{0,1\}^d\\0&\text{otherwise}.\end{cases}\]
This function satisfies (i) with $K=2^d$. The condition
\begin{equation}\label{eq:finite-case-check}
F\text{ satisfies (ii) with }M=3^{d-1}2^d
\end{equation}
is exactly the statement that any grid of shape $(\ZZ/2\ZZ)^d$ has at most $3^{d-1}2^d$ copies of $\mathsf{AB}$. In particular, \eqref{eq:finite-case-check} is true by \cref{thm:AK-restated}, and thus can be verified by checking each of the $2^{2^d}$ cases. From \eqref{eq:finite-case-check} and \cref{prop:local}, the determination of $C_d(\mathsf{AB})$ is immediate. For each fixed dimension $d$, this finite procedure furnishes an elementary proof that $C_d(\mathsf{AB})=3^{d-1}$. 
\end{remark}

\subsection{Linear programming: how to use \texorpdfstring{\cref{prop:local}}{Proposition 6.2}}\label{subsec:local-philosophy}
Applying \cref{prop:local} has two central difficulties: choosing the weight function $F$ and verifying the bound (ii). Before presenting our applications of \cref{prop:local}, we give a bit of insight into how we performed these steps. Our procedure is as follows:

\begin{enumerate}
    \item Choose some set $S\subset\ZZ^d$ --- we think of this set as the set of cells whose letters are allowed to vary.
    
    \item Select some letter $A$ from $w$. Now, let
    \[\mathcal F=\left\{\begin{array}{l}(p,\bv)\in \ZZ^d\times\{-1,0,1\}^d\setminus\{\zero\}\colon\\\ \ \ \ \text{ for all }1\leq i\leq\len(w),\ p+(i-1)\bv\in S\text{ or }w_i=A\end{array}\right\}.\]
    The set $\mathcal F$ consists of all pairs $(p,\bv)$ for which the event $(p,\bv)\in\mathcal A(w,\mathcal G)$ depends only on the letters assigned to elements of $S$, as well as whether other cells are assigned $A$.

    \item By the definition of $\mathcal F$, for any weight function $F$ with $F\geq 0$ and $\supp F\subset\mathcal F$,
    \[\sum_{(p,\bv)\in\mathcal A(w,\mathcal G)}F(p,\bv)\]
    is maximized at some grid $\mathcal G$ with $\mathcal G(x)=A$ for all $x\not\in S$. Let $\mathscr G$ be the set of such grids.

    \item The problem
    \begin{align}
    \text{minimize\ }&M_1\label{eq:linprog}\\
    \notag\text{subject to\ }
    &\supp F\subset\mathcal F,\\
    \notag&F(p,\bv)\geq 0\text{ for all }(p,\bv)\in\mathcal F,\\
    \notag&\text{$F$ satisfies condition (i) of \cref{prop:local} with $K=1$, and}\\
    &\sum_{(p,\bv)\in\mathcal A(w,\mathcal G)}F(p,\bv)\leq M_1\text{ for all }\mathcal G\in\mathscr G\label{eq:linprog-constraint}
    \end{align}
    is a linear program with $|\mathcal F|+1$ variables and $|\mathscr G|=2^{|S|}$ conditions. Solving it gives a weight function $F$ which satisfies (i) with $K=1$ and $M=M_1$, and so gives a bound $C_d(w)\leq M_1$.
\end{enumerate}

The utility of this method is constrained by the fact that the time to produce and solve this linear program grows exponentially in $|S|$. To make the computations more feasible, we are able to rely on the symmetry of the problem to speed up the computation. If $S$ is chosen to have some symmetry group $\Pi$, then the linear program \eqref{eq:linprog} admits an optimum $F$ which is preserved under action by $\Pi$ --- such an optimum can be found by starting with any optimal $F$ and averaging over its $\Pi$-orbit. This allows us to reduce the number of variables in the linear program: each $\Pi$-orbit of $\mathcal F$ can be collapsed into a single variable. This reduction in the number of variables reduces the number of distinct constraints given by \eqref{eq:linprog-constraint}. In particular, grids $\mathcal G$ in the same $\Pi$-orbit give the same constraint on $F$, so we only need to enumerate the constraints for one grid in each $\Pi$-orbit. This substantially reduces the amount of time necessary to write down a linear program equivalent to \eqref{eq:linprog}.

Finally, we are able to save some time on solving the (sleeker) linear program. The number of constraints is vastly greater than the number of variables. So, we begin by sampling a small number of constraints and solving the linear program with only these constraints (ideally, we choose few enough constraints that this step can be done quickly), to obtain a variable vector $\bx_1$. Then, we add in all constraints which are violated by $\bx_1$, and solve again, obtaining a variable vector $\bx_2$. At some point, we will have a vector $\bx_t$ which optimizes our linear program with a subset of the constraints, and violates no constraints. Such a vector is necessarily an optimum to the original linear program. This iterative procedure reduces both the time and space required to solve the linear program. (Such ``sparsification'' of linear programs is an active research area, and there is a rather effective general-purpose algorithm due to Clarkson \cite{clarkson}. Our algorithm is rather more primitive, but has sufficed for all of our purposes in this work.)

\subsection{Applications of \texorpdfstring{\cref{prop:local}}{Proposition 6.2}}\label{subsec:local-applications}

Now that we have explained the method whereby we determine appropriate weight functions $F$, we are ready to apply this method to compute $C_2(\mathsf{ABB},2)$, $C_2(\mathsf{ABCC},2)$, and $C_2(\mathsf{BABBB},2)$. 

\begin{lemma}\label{lem:ABB} The word $\mathsf{ABB}$ is $2$-stackable.
\end{lemma}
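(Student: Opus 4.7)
From \cref{prop:1d} (with $\ell = 3$, $c_\mathrm{left} = 1$, $c_\mathrm{right} = 2$, $c_\mathrm{repeat} = 0$), we have $C_1(\mathsf{ABB}) = 2/3$, so $2$-stackability of $\mathsf{ABB}$ amounts to the identity $C_2(\mathsf{ABB}) = 2$. The lower bound $C_2(\mathsf{ABB}) \geq 2$ is immediate from \cref{lem:concentration-growth-lower-bound} applied to the $\mathsf{ABB}$-extremal one-dimensional grid $\Grid(\mathsf{ABB})$. The substance of the lemma is therefore the matching upper bound $C_2(\mathsf{ABB}) \leq 2$, which I do not see how to obtain by the combinatorial reductions of \cref{sec:reductions}; instead I would apply the local technique of \cref{prop:local}.

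Concretely, the plan is to produce a finitely supported weight function $F\colon\ZZ^2 \times (\{-1,0,1\}^2 \setminus \{\zero\}) \to \RR_{\geq 0}$ satisfying conditions (i) and (ii) of \cref{prop:local} with $M/K = 2$. I would follow the procedure of \cref{subsec:local-philosophy}: choose a finite region $S \subset \ZZ^2$ of ``free'' cells and freeze the letter $\mathsf{B}$ on all cells outside $S$ (since $\mathsf{B}$ is the dominant letter, both by multiplicity in the word and in the extremal one-dimensional grid), then restrict the support of $F$ to the set $\mathcal{F}$ of pairs $(p,\bv)$ whose appearance of $\mathsf{ABB}$ depends only on letters inside $S$ together with $\mathsf{B}$-cells. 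The search for $F$ then becomes a linear program: minimize $M$ over $F \geq 0$ supported on $\mathcal{F}$, subject to the normalization in (i) with $K=1$ and to the $2^{|S|}$ inequalities in \eqref{eq:linprog-constraint}, one per assignment of letters in $S$.

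To keep the LP tractable I would pick $S$ invariant under the dihedral symmetry group $\Pi$ of $\ZZ^2$ fixing the origin, then look for a $\Pi$-symmetric optimum $F$, which collapses variables and constraints into $\Pi$-orbits. The main obstacle is that the ratio $M/K = 2$ is not reached for small $S$: for $S$ too small, the LP optimum exceeds $2$, reflecting the fact that local considerations alone cannot rule out non-stacked configurations unless one can ``see'' enough of the grid at once. One must enlarge $S$ until the stacking construction is forced to be locally optimal, all while keeping the LP small enough to solve in practice via the iterative constraint-addition strategy of \cref{subsec:local-philosophy}.

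Once a suitable $S$ and optimal symmetric $F$ have been located computationally, the proof reduces to (a) presenting $F$ explicitly, most cleanly via a diagram in the style of \cref{fig:AB-2d-wt}; (b) verifying the normalization (i) by inspection; and (c) verifying (ii) by a finite case check over all completions $\mathcal{G}$ of $S$ to an infinite grid consistent with $F$'s support, confirming $\sum_{(p,\bv)\in\mathcal{A}(\mathsf{ABB},\mathcal{G})} F(p,\bv) \leq 2$ in each case. Then \cref{prop:local} yields $C_2(\mathsf{ABB}) \leq 2$, completing the proof. The genuine work is in step (c): even after symmetry reduction, it is a substantial but purely mechanical verification, and it is the only step where the proof really feels the difference between $\mathsf{ABB}$ and the much easier word $\mathsf{AB}$ treated in \cref{lem:AB-elementary}.
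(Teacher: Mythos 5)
Your framework and reduction are exactly the paper's: the paper also notes $C_1(\mathsf{ABB})=2/3$, reduces $2$-stackability to the upper bound $C_2(\mathsf{ABB})\le 2$ (the lower bound being automatic from \cref{prop:simple-d-bound}), and obtains that upper bound from \cref{prop:local} with a weight function found by the linear-programming recipe of \cref{subsec:local-philosophy}, frozen letter $\mathsf{B}$ outside a finite window. So there is no divergence of method to discuss.

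The genuine gap is that your proposal stops exactly where the proof begins: it never exhibits the weight function $F$, never fixes the window $S$, and never carries out the verification of condition (ii). These are not routine details that can be waved through. In particular, your claim that ``one must enlarge $S$ until the stacking construction is forced to be locally optimal'' presupposes that some finite $S$ yields LP value exactly $2$, and that is precisely the nontrivial content of the lemma --- local/LP methods generically give bounds that are \emph{not} tight (the paper itself flags the tightness here as surprising in \cref{subsec:intro-local}), so the existence of a finite certificate cannot be assumed; it must be demonstrated. The paper demonstrates it concretely: $S=\{-1,0,1\}^2$ (the $3\times 3$ window) already suffices, with the explicit $F$ of \cref{fig:ABB-2d-wt} satisfying (i) with $K=6$, and condition (ii) with $M=12$ verified not by computer but by a structured by-hand case analysis (\cref{lem:ABB-by-hand} in \cref{sec:appendix}), organized around the parity of the weighted count and the letters in the center, side, and corner cells of the window. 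Without producing such a certificate --- or at least the data $(S,F,K,M)$ together with a completed finite check --- what you have is a correct description of the search strategy, not a proof of the lemma.
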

\begin{proof} Since $C_1(\mathsf{ABB}) = 2/3$, it suffices to show that $C_2(\mathsf{ABB})\le 2$. Consider the function $F\colon\ZZ^2\times(\{-1,0,1\}^2\setminus\{\zero\})\to \RR$ described by the diagram in \cref{fig:ABB-2d-wt}.
\begin{figure}
\begin{tikzpicture}[scale=1]
    \localDiagram{0/0/1/1/2/below right,
                  0/0/-1/1/2/below left,
                  0/0/-1/-1/2/above left,
                  0/0/1/-1/2/above right,
                  1/0/-1/0/2/below,
                  1/0/-1/1/1/above right,
                  1/0/-1/-1/1/below right,
                  -1/0/1/0/2/below,
                  -1/0/1/1/1/above left,
                  -1/0/1/-1/1/below left,
                  0/1/0/-1/2/right,
                  0/1/-1/-1/1/above left,
                  0/1/1/-1/1/above right,
                  0/-1/0/1/2/right,
                  0/-1/-1/1/1/below left,
                  0/-1/1/1/1/below right,
                  1/1/-1/0/2/above,
                  1/1/0/-1/2/right,
                  1/1/-1/-1/2/above left,
                  -1/1/1/0/2/above,
                  -1/1/0/-1/2/left,
                  -1/1/1/-1/2/above right,
                  -1/-1/1/0/2/below,
                  -1/-1/0/1/2/left,
                  -1/-1/1/1/2/below right,
                  1/-1/-1/0/2/below,
                  1/-1/0/1/2/right,
                  1/-1/-1/1/2/below left}
\end{tikzpicture}
\caption{The weight function used to upper-bound $C_2(\mathsf{ABB})$.}
\label{fig:ABB-2d-wt}
\end{figure}
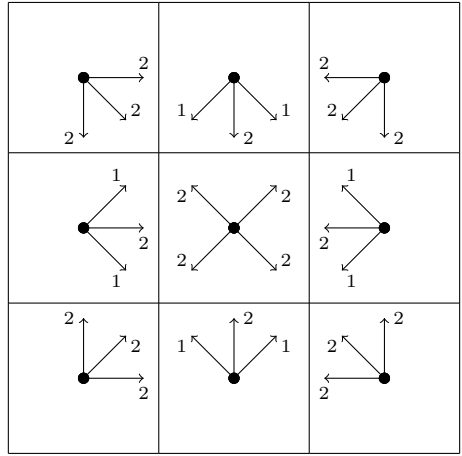
This function satisfies (i) of \cref{prop:local} with $K=6$, so we need to prove that for any grid $\mathcal G$ of dimension 2,
\begin{equation}\label{eq:ABB-local}
f(\mathcal G):=\sum_{(p,\bv)\in \mathcal A(\mathsf{ABB},\mathcal G)}F(p,\bv)\leq 12.
\end{equation}
As $F$ falls into the paradigm described by \cref{subsec:local-philosophy} with $S=\{-1,0,1\}^2$ and $A=\B$, it suffices to show \eqref{eq:ABB-local} for grids $\mathcal G$ where $\mathcal G(x)=\B$ for $x\not\in S$. There are $2^9=512$ such grids. We present a by-hand verification of \eqref{eq:ABB-local} in \cref{sec:appendix}, where it is presented as \cref{lem:ABB-by-hand}.
\end{proof}

\begin{lemma}\label{lem:ABCC-2d} The word $\mathsf{ABCC}$ is $2$-stackable.
\end{lemma}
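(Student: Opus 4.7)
Applying \cref{prop:1d} to $w = \mathsf{ABCC}$, for which $\ell = 4$, $c_{\mathrm{left}} = 1$ (the lone palindromic prefix $\mathsf{A}$), $c_{\mathrm{right}} = 2$ (the palindromic suffix $\mathsf{CC}$), and $c_{\mathrm{repeat}} = 0$ (no nontrivial prefix is also a suffix, since these would have to start and end with the same letter), gives
\[
C_1(\mathsf{ABCC}) \;=\; \max\!\left(\tfrac{1}{4},\,\tfrac{1}{4 - 3/2}\right) \;=\; \tfrac{2}{5}.
\]
By the lower bound of \cref{prop:simple-d-bound}, it suffices to prove the matching upper bound $C_2(\mathsf{ABCC}) \le 6/5$.

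The plan is to mirror the proof of \cref{lem:ABB}: apply \cref{prop:local} with a suitable weight function $F\colon \ZZ^2 \times (\{-1,0,1\}^2 \setminus \{\zero\}) \to \RR$. Following the recipe of \cref{subsec:local-philosophy}, I would pick the dominant letter $A = \mathsf{C}$ (it appears twice in $\mathsf{ABCC}$, so cells outside a finite window can be safely set to $\mathsf{C}$ while still realizing many appearances of $w$) and a rotationally and reflectively symmetric set $S \subset \ZZ^2$. The natural first attempt is $S = \{-1,0,1\}^2$, as in \cref{lem:ABB}; since $\mathsf{ABCC}$ has length four, however, some cells of an appearance may escape $S$, so I would be prepared to enlarge $S$ (e.g.\ to $\{-2,\ldots,2\}^2$, or a cross-shaped enlargement) if the corresponding linear program is infeasible at the target bound. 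The permissible supports for $F$ are the pairs $(p, \bv)$ whose appearance of $\mathsf{ABCC}$ depends only on the letters of $S$ given that cells outside $S$ are labeled $\mathsf{C}$ -- equivalently, pairs where the $\mathsf{A}$- and $\mathsf{B}$-cells lie in $S$.

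With these choices, finding $F$ reduces to the linear program \eqref{eq:linprog}: minimize $M_1$ subject to the symmetry-respecting normalization (i) with $K = 1$ and the constraints (ii), one per grid configuration on $S$ with exterior cells fixed to $\mathsf{C}$. Imposing the dihedral symmetry of $S$ collapses variables into $\Pi$-orbits and grid-constraints into $\Pi$-orbits of configurations, making the LP tractable. Once an optimal $F$ with $M_1 = 6/5$ is located, conclusions (i) and (ii) of \cref{prop:local} are both finite checks: (i) is immediate arithmetic, and (ii) is an enumeration over the (at most $3^{|S|}$) labelings of $S$, presentable either by a table analogous to \cref{fig:ABB-2d-wt} together with case analysis as in \cref{sec:appendix}, or by a short computer verification. \cref{prop:local} then gives $C_2(\mathsf{ABCC}) \le M_1/K = 6/5$, as desired.

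The main obstacle I anticipate is finding a weight function realizing exactly $6/5$: because $\mathsf{ABCC}$ stretches over four cells and involves three distinct letters, many appearance-types coexist on even a $3\times 3$ window, and the LP has substantially more variables and constraints than for $\mathsf{ABB}$. If $S = \{-1,0,1\}^2$ proves too small, the fallback is to enlarge $S$ and re-run the LP, leveraging symmetry to keep the problem size manageable; the existence of a matching lower bound from the stacked one-dimensional extremal grid (\cref{constr:1d-pal} applied to $\mathsf{ABCC}$, yielding the grid $\mathrm{Grid}(\mathsf{ABCCB})$) guarantees that such an $F$ exists.
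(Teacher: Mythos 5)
Your setup is right and matches the paper's strategy exactly: compute $C_1(\mathsf{ABCC})=2/5$, reduce to the upper bound $C_2(\mathsf{ABCC})\le 6/5$, and certify it via \cref{prop:local} with a weight function found by the linear-programming recipe. But the proposal stops precisely where the mathematical content of the proof begins. The paper's proof \emph{is} the explicit weight function (the one in \cref{fig:ABCC-2d-wt}, supported on a $5\times 5$ window --- so your anticipated enlargement beyond $\{-1,0,1\}^2$ is indeed necessary) together with the verification that it satisfies condition (ii) with $K=10$ and $M=12$, carried out by hand in \cref{lem:ABCC-by-hand} of \cref{sec:appendix}. You exhibit neither the function nor the verification; ``run the LP and then check the output'' is a plan for producing a proof, not a proof.

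More seriously, the one place where you attempt to justify that the plan must succeed is wrong: the existence of a construction attaining $6/5$ does \emph{not} guarantee that a local weight function certifying the upper bound $6/5$ exists. The local LP is a relaxation, and its optimum can strictly exceed $C_d(w)$ no matter how large $S$ is taken; there is no duality between grids and finite-window weight functions that forces tightness. The paper itself stresses this point --- it calls the tightness of the local method in these cases surprising, and for $\mathsf{ABBB}$ the best local certificate found (\cref{fig:ABBB-upper}) gives only $\approx 1.679$, strictly above the conjectured value $8/5$, with no known way to close the gap locally. So your fallback (``enlarge $S$; a certificate must eventually appear'') is unsound, and the existence of an $F$ achieving exactly $6/5$ is a fact that must be demonstrated by exhibiting it, which is what the paper does and your proposal does not.
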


\begin{proof} Since $C_1(\mathsf{ABCC})=2/5$, it suffices to show that $C_2(\mathsf{ABCC})\le 6/5$. Consider the function $F\colon\ZZ^2\times(\{-1,0,1\}^2\setminus\{\zero\})\to\RR$ described by the diagram in \cref{fig:ABCC-2d-wt}.
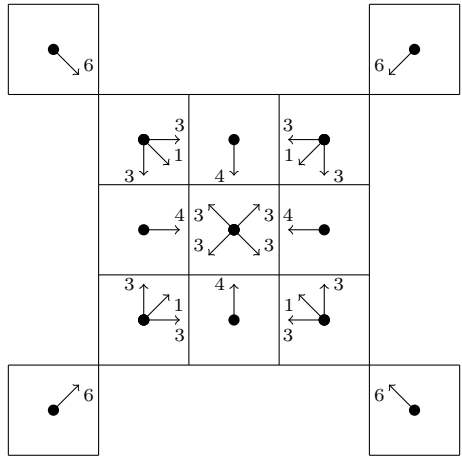
\begin{figure}
\begin{tikzpicture}[scale=0.6]
    \localDiagram{-2/-2/1/1/6/below right,
                  2/-2/-1/1/6/below left,
                  2/2/-1/-1/6/above left,
                  -2/2/1/-1/6/above right,
                  -1/-1/1/0/3/below,
                  -1/-1/0/1/3/left,
                  1/-1/-1/0/3/below,
                  1/-1/0/1/3/right,
                  1/1/-1/0/3/above,
                  1/1/0/-1/3/right,
                  -1/1/1/0/3/above,
                  -1/1/0/-1/3/left,
                  -1/-1/1/1/1/below right,
                  1/-1/-1/1/1/below left,
                  1/1/-1/-1/1/above left,
                  -1/1/1/-1/1/above right,
                  -1/0/1/0/4/above,
                  1/0/-1/0/4/above,
                  0/-1/0/1/4/left,
                  0/1/0/-1/4/left,
                  0/0/1/1/3/below right,
                  0/0/-1/1/3/below left,
                  0/0/-1/-1/3/above left,
                  0/0/1/-1/3/above right}
\end{tikzpicture}
\caption{The weight function used to upper-bound $C_2(\mathsf{ABCC})$.}
\label{fig:ABCC-2d-wt}
\end{figure}

This function satisfies (i) of \cref{prop:local} with $K=10$, so we need only show that it satisfies (ii) with $M=12$. Similarly to the case of $C_2(\mathsf{ABB})$, we defer this to \cref{sec:appendix}, where the verification is presented as \cref{lem:ABCC-by-hand}.
\end{proof}

\begin{lemma}\label{lem:BABBB} We have $C_2(\mathsf{BABBB})=8/5$. In particular, since $C_1(\mathsf{BABBB})=1/2$, the word $\mathsf{BABBB}$ is neither $2$-stackable nor $2$-slantable.    
\end{lemma}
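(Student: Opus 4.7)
The claim is an equality $C_2(\mathsf{BABBB}) = 8/5$, and I would prove the two bounds separately.

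For the lower bound $C_2(\mathsf{BABBB}) \geq 8/5$, I would use a modular $5$-queens construction, anticipating the connection to $n$-queens highlighted in the introduction and reused for \cref{prop:queen-words}. On the $5 \times 5$ torus, place $\A$ at the five positions $\{(i, 2i \bmod 5) : 0 \le i \le 4\}$ and $\B$ at every other cell. Since $\gcd(5, 6) = 1$, no two of these positions lie on a common row, column, diagonal, or anti-diagonal of the torus, so for any $\A$-cell $q$ and any direction $\bv \in \{-1,0,1\}^2 \setminus \{\zero\}$, none of $q - \bv, q + \bv, q + 2\bv, q + 3\bv$ is an $\A$. Hence every one of the five $\A$'s extends to an appearance of $\mathsf{BABBB}$ in each of the $8$ directions, giving $40$ appearances and concentration $40/25 = 8/5$.

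For the upper bound $C_2(\mathsf{BABBB}) \leq 8/5$, I would invoke \cref{prop:local} with a weight function $F$ produced by the linear programming machinery of \cref{subsec:local-philosophy}. Taking the dominant letter to be $A = \B$ and fixing a square window $S$ centered at $\zero$, the support of $F$ is restricted to pairs $(p, \bv)$ whose $\A$-position $p + \bv$ lies in $S$. The LP minimizes $M$ subject to condition~(i) of \cref{prop:local} (matching the sums of $F$ across axial and diagonal directions) and condition~(ii) (the bound $\sum_{(p, \bv) \in \mathcal A(\mathsf{BABBB}, \mathcal G)} F(p, \bv) \le M$ for every infinite grid $\mathcal G$ that is $\B$ outside $S$). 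Using $D_4$-symmetric weights and the constraint-generation trick from \cref{subsec:local-philosophy}, the LP is tractable, and its optimum yields $M/K = 8/5$.

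The main obstacle is choosing the window $S$ and the weight function so that the LP bound is exactly $8/5$, not something larger. A uniform weight on $\{(p, \bv) : p + \bv \in \{0,\ldots,4\}^2\}$ is provably insufficient, since placing ten $\A$'s at $\{(r, c) : c \in \{0, 4\},\ 0 \le r \le 4\}$ produces $60$ localized appearances in that window. Thus appearances near the boundary of $S$ must be downweighted, reflecting that the assumption ``$\B$ outside $S$'' is most favorable to the grid there; finding $D_4$-symmetric weights that strike the right balance between interior and boundary is the delicate step. Once an LP-optimal $F$ is in hand, the verification of condition (ii) of \cref{prop:local} is a finite case analysis, very much in the spirit of the by-hand verifications for $\mathsf{ABB}$ and $\mathsf{ABCC}$ deferred to the appendix in \cref{lem:ABB,lem:ABCC-2d}. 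Together, the matching bounds give $C_2(\mathsf{BABBB}) = 8/5$; since $C_1(\mathsf{BABBB}) = 1/2$ by \cref{prop:1d} (using $c_{\mathrm{left}} = c_{\mathrm{right}} = 3$, $c_{\mathrm{repeat}} = 1$), the ratio $16/5$ is strictly between $3$ and $4$, which is the stated ``in particular''.
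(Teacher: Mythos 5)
Your lower bound is complete and coincides with the paper's: the paper uses the $5\times 5$ grid with $\A$'s at positions $(i,3i\bmod 5)$ --- a modular $5$-queens configuration equivalent to your $(i,2i\bmod 5)$ --- and counts $8$ appearances of $\mathsf{BABBB}$ around each of the five $\A$'s, giving concentration $40/25=8/5$. Your computation $C_1(\mathsf{BABBB})=1/2$ via \cref{prop:1d} and the resulting conclusion that $C_2/C_1=16/5$ lies strictly between $3$ and $4$ are also correct and match the paper.

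The gap is in the upper bound. You describe exactly the machinery the paper uses --- \cref{prop:local} together with the linear-programming procedure of \cref{subsec:local-philosophy}, with dominant letter $\B$, support restricted to pairs $(p,\bv)$ with $p+\bv\in S$, and $D_4$-symmetric weights --- but you never exhibit a weight function, and you never verify condition (ii) for one; you only assert that ``its optimum yields $M/K=8/5$.'' Nothing in the setup guarantees this a priori: for small windows the LP optimum is strictly larger than $8/5$ (for $|S|=1$ it is $8$), the optimum decreases as $S$ grows, and whether any finite window attains exactly $8/5$ --- rather than the local method having an inherent gap above the true value --- can only be known once the computation is actually carried out. Your correct observation that uniform weights fail shows you understand the difficulty, but identifying the difficulty is not the same as resolving it. The paper closes precisely this step by exhibiting a concrete weight function (\cref{fig:BABBB-2d-wt}, with arrows drawn from the $\A$-position) satisfying condition (i) with $K=40$ and, by a computer check, condition (ii) with $M=64$, whence $C_2(\mathsf{BABBB})\le 64/40=8/5$; the paper moreover remarks that it knows no human-checkable version of this finite case analysis. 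So the essential content of the upper bound is exactly the computation your proposal defers rather than performs.
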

\begin{proof} For the lower bound, consider the following grid $\Gamma$ of shape $\Zmod{5}\times\Zmod{5}$:
\[
\begin{array}{ccccc}
\A & \B & \B & \B & \B \\
\B & \B & \B & \A & \B \\
\B & \A & \B & \B & \B \\
\B & \B & \B & \B & \A\\
\B & \B & \A & \B & \B
\end{array}
\]
This grid has five occurrences of $\A$, and in each direction around these occurrences, there is an appearance of $\mathsf{BABBB}$. So, $\Gamma$ has $40$ appearances of $\mathsf{BABBB}$, and thus $c_2(\mathsf{BABBB},\Gamma)=8/5$.

For the upper bound, consider the function $F\colon\ZZ^2\times(\{-1,0,1\}^2\setminus\{\zero\})\to\RR$ described by the diagram in \cref{fig:BABBB-2d-wt}. To highlight the elements of $S$ (where $S$ is as in \cref{subsec:local-philosophy}) as the relevant cells, we draw arrows starting from the \emph{second} character of the word, i.e.~the only $\A$, rather than the first character $\B$. The function $F$ so described satisfies condition (i) of \cref{prop:local} with $K=40$. We have verified by computer check that it satisfies condition (ii) with $M=64$. (We are not aware of a way to make this finite case-check short enough to be reasonably performed by a human.)
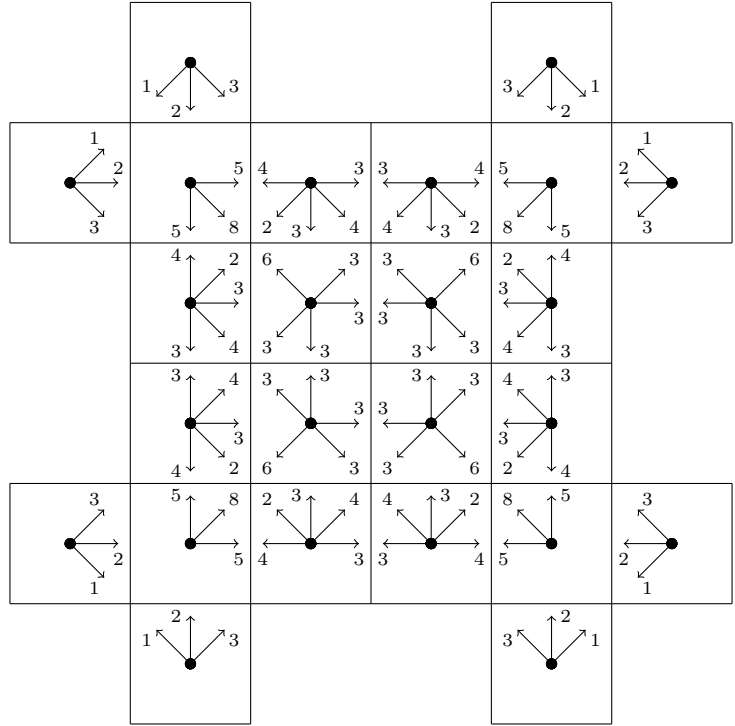
\begin{figure}
\begin{tikzpicture}[scale=0.8]
    \localDiagram{2/3/1/0/2/below,
            7/3/-1/0/2/below,
            2/6/1/0/2/above,
            7/6/-1/0/2/above,
            3/2/0/1/2/left,
            3/7/0/-1/2/left,
            6/2/0/1/2/right,
            6/7/0/-1/2/right,
		  3/3/1/0/5/below,
		  3/6/1/0/5/above,
		  6/3/-1/0/5/below,
		  6/6/-1/0/5/above,
		  3/3/0/1/5/left,
		  6/3/0/1/5/right,
		  3/6/0/-1/5/left,
		  6/6/0/-1/5/right,
		  3/4/1/0/3/below,
		  6/4/-1/0/3/below,
		  3/5/1/0/3/above,
		  6/5/-1/0/3/above,
		  4/3/0/1/3/left,
		  4/6/0/-1/3/left,
		  5/3/0/1/3/right,
		  5/6/0/-1/3/right,
		  3/4/0/1/3/left,
		  6/4/0/1/3/right,
		  3/5/0/-1/3/left,
		  6/5/0/-1/3/right,
		  4/3/1/0/3/below,
		  4/6/1/0/3/above,
		  5/3/-1/0/3/below,
		  5/6/-1/0/3/above,
		  3/5/0/1/4/left,
		  3/4/0/-1/4/left,
		  6/5/0/1/4/right,
		  6/4/0/-1/4/right,
		  5/3/1/0/4/below,
		  4/3/-1/0/4/below,
		  5/6/1/0/4/above,
		  4/6/-1/0/4/above,
		  4/4/1/0/3/above,
		  4/5/1/0/3/below,
		  5/4/-1/0/3/above,
		  5/5/-1/0/3/below,
		  4/4/0/1/3/right,
		  5/4/0/1/3/left,
		  4/5/0/-1/3/right,
		  5/5/0/-1/3/left,
		  2/3/1/1/3/above left,
		  2/6/1/-1/3/below left,
		  7/3/-1/1/3/above right,
		  7/6/-1/-1/3/below right,
		  3/2/1/1/3/below right,
		  6/2/-1/1/3/below left,
		  3/7/1/-1/3/above right,
		  6/7/-1/-1/3/above left,
            2/3/1/-1/1/below left,
            2/6/1/1/1/above left,
            7/3/-1/-1/1/below right,
            7/6/-1/1/1/above right,
            3/2/-1/1/1/below left,
            6/2/1/1/1/below right,
            3/7/-1/-1/1/above left,
            6/7/1/-1/1/above right,
		  3/3/1/1/8/above right,
		  3/6/1/-1/8/below right,
		  6/3/-1/1/8/above left,
		  6/6/-1/-1/8/below left,
		  3/4/1/1/4/above right,
		  3/5/1/-1/4/below right,
		  6/4/-1/1/4/above left,
		  6/5/-1/-1/4/below left,
		  4/3/1/1/4/above right,
		  5/3/-1/1/4/above left,
		  4/6/1/-1/4/below right,
		  5/6/-1/-1/4/below left,
            4/3/-1/1/2/above left,
            4/6/-1/-1/2/below left,
            5/3/1/1/2/above right,
            5/6/1/-1/2/below right,
            3/4/1/-1/2/below right,
            6/4/-1/-1/2/below left,
            3/5/1/1/2/above right,
            6/5/-1/1/2/above left,
		  4/5/1/1/3/above right,
		  4/4/1/-1/3/below right,
		  5/5/-1/1/3/above left,
		  5/4/-1/-1/3/below left,
		  5/4/1/1/3/above right,
		  4/4/-1/1/3/above left,
		  5/5/1/-1/3/below right,
		  4/5/-1/-1/3/below left,
		  5/5/1/1/6/above right,
		  5/4/1/-1/6/below right,
		  4/5/-1/1/6/above left,
		  4/4/-1/-1/6/below left}
\end{tikzpicture}
\caption{The weight function used to upper-bound $C_2(\mathsf{BABBB})$. Note that each arrow in this figure starts at the $\A$ and points towards the $\mathsf{BBB}$ (and away from the single $\B$) in the appearance of $w$ the arrow indicates.}
\label{fig:BABBB-2d-wt}
\end{figure}
\end{proof}

\section{Short words in two dimensions}\label{sec:short-words}

The techniques we have presented thus far are enough to compute $C_d(w)$ for many short words. In particular, we are able to determine which words of length at most four are $2$-stackable. We do the same for many words of length five.

\subsection{Words of length at most four}\label{subsec:short-up-to-4} Combining the results of \cref{sec:reductions,sec:local}, we may show \cref{thm:short-words}.
\begin{proof}[Proof of \cref{thm:short-words}]
    Recall that we exclude words with only one distinct letter.

    There is only one isomorphism class of words with two letters, the class containing $\mathsf{AB}$. This isomorphism class is $2$-stackable (in fact, $d$-stackable) by \cref{thm:AK-restated} (i.e., the main result of Alon and Kravitz \cite{alonkravitz}).

    There are three isomorphism classes of words with three letters, namely, the class containing $\mathsf{ABA}$, the class containing $\mathsf{ABB}$, and the class containing $\mathsf{ABC}$. The first is $2$-stackable (and $d$-stackable) by \cref{thm:odd-even}. The second is $2$-stackable by \cref{lem:ABB}. The third is $2$-stackable (and $d$-stackable) by \cref{thm:AK-restated}.

    There are 10 isomorphism classes of words with four letters, namely
    \begin{itemize}
        \item five isomorphism classes of words with two distinct letters: those containing each of
        \begin{itemize}
            \item $\mathsf{AABB}$, which is $d$-stackable by \cref{thm:AkBk}, since $\mathsf{AB}$ is $d$-stackable by \cref{thm:AK-restated}.
            \item $\mathsf{ABAB}$, which is $d$-stackable by \cref{thm:odd-even}.
            \item $\mathsf{ABBA}$, which is $2$-stackable by \cref{lem:ABB-reduction}, since $\mathsf{ABB}$ is $2$-stackable by \cref{lem:ABB}.
            \item $\mathsf{AABA}$, which is $2$-stackable by \cref{lem:ABB-reduction}, since $\mathsf{AABA}$ and $\mathsf{BABB}$ are isomorphic and $\mathsf{ABB}$ is $2$-stackable by \cref{lem:ABB}.
            \item $\mathsf{AAAB}$, which is not $2$-stackable. It is isomorphic to $\mathsf{ABBB}$; see \cref{fig:ABBB}. 
        \end{itemize}
        \item four isomorphism classes of words with three distinct letters: those containing
        \begin{itemize}
            \item $\mathsf{AABC}$, which is $2$-stackable by \cref{lem:ABCC-2d}, since $\mathsf{AABC}$ and $\mathsf{ABCC}$ are isomorphic.
            \item $\mathsf{ABAC}$, which is $d$-stackable by \cref{thm:odd-even}.
            \item $\mathsf{ABBC}$, which is $2$-stackable by \cref{lem:ABB-reduction}, since $\mathsf{ABB}$ is $2$-stackable by \cref{lem:ABB}.
            \item $\mathsf{ABCA}$, which is $2$-stackable by \cref{lem:ABB-reduction}, since $\mathsf{ABB}$ is $2$-stackable by \cref{lem:ABB}.
        \end{itemize}
        \item one isomorphism class of words with 4 distinct letters, containing \begin{itemize}
            \item $\mathsf{ABCD}$, which is $d$-stackable by \cref{thm:AK-restated}.
        \end{itemize}
    \end{itemize}

\begin{figure}
    \centering
    $\begin{matrix}
\A & \B & \B & \B & \B \\
\B & \B & \B & \A & \B \\
\B & \A & \B & \B & \B \\
\B & \B & \B & \B & \A \\
d \B & \B & \A & \B & \B 
    \end{matrix}$ \hspace{1.8cm}
    $\begin{matrix}
\A & \B & \B & \B \\
\A & \B & \B & \B \\
\A & \B & \B & \B \\
\A & \B & \B & \B 
    \end{matrix}$
    \caption{The grid $\Gamma$ on the left has $c_2(\mathsf{ABBB}, \Gamma) = 8/5.$ This is greater than the concentration of $\mathsf{ABBB}$ in the right grid $\Gamma'$, which is constant in all but one coordinate, and has the maximum concentration $c_2(\mathsf{ABBB}, \Gamma') = 3/2 = 3C_1(\mathsf{ABBB})$ among all grids constant in all but one coordinate.}
    \label{fig:ABBB}
\end{figure}
\end{proof}

\subsection{Words of length five}\label{subsec:short-5} More techniques are needed to determine which words of length 5 are and are not $2$-stackable. Still, we are able to show 2 of the 31 isomorphism classes are not $2$-stackable, and that 14 of the 31 are $2$-stackable. We do not know which of the remaining 15 cases are $2$-stackable or not. This is a natural direction for future work.

The 2 isomorphism classes which we can show are not $2$-stackable contain $\mathsf{ABBBB}$ and $\mathsf{BABBB}$. The grid $\Gamma$ on the left of \cref{fig:ABBB} satisfies $c_2(\mathsf{ABBBB}, \Gamma) = 8/5 > 6/5 = 3 C_1(\mathsf{ABBBB})$ and $c_2(\mathsf{BABBB}, \Gamma) = 8/5 > 6/4 = 3 C_1(\mathsf{BABBB})$, so neither of these words is $2$-stackable. In fact, since $c_2(\mathsf{ABBBB}, \Gamma) = 4\times C_1(\mathsf{ABBBB})$, we have that $\mathsf{ABBBB}$ is 2-slantable.

There are eight isomorphism classes which we can show are $d$-stackable by \cref{thm:odd-even}: the isomorphism classes containing
\[\mathsf{ABABA},\ \mathsf{ABABC},\ \mathsf{ABACA},\ \mathsf{ABACD},\ \mathsf{ABCBA},\ \mathsf{ABCBD},\ \mathsf{ABCDA},\ \mathsf{ABCDE}.\]
(The $d$-stackability of the last two words in the above list also follows from the earlier result of Alon and Kravitz.) The rest of our stackability claims are based on \cref{prop:proj-reductions}. 

We can show that one more isomorphism class is $d$-stackable for every $d$: namely, the class containing $\mathsf{AABBA}$. We reduce to $\mathsf{AABB}$, which we know from the previous proposition is $d$-stackable, using \cref{prop:proj-reductions} with $\pi$ equal to the identity. The result follows from the facts that, in any grid $\Gamma$, $\ct(\mathsf{AABBA},\Gamma)\leq\ct(\mathsf{AABB},\Gamma)$, and that the grid $\Grid(\mathsf{AABB})$ in one dimension is both $\mathsf{AABBA}$-extremal and $\mathsf{AABB}$-extremal.

Three isomorphism classes can be seen to be $2$-stackable by reduction to $\mathsf{ABB}$ or $\mathsf{AAB}$ using \cref{prop:proj-reductions}. Recall $\mathsf{ABB}$ is $2$-stackable by \cref{lem:ABB}.
\begin{itemize}
    \item $\mathsf{AABCC}$ is $2$-stackable: Let $\pi$ be the projection that fixes $\A$ and $\B$ and takes $\C$ to $\A$. Note that $C_1(\mathsf{AABCC})=\frac13=\frac12C_1(\mathsf{AAB})$. Now, (a) there exists an $\mathsf{AABCC}$-extremal grid $\Gamma_0:=\Grid(\mathsf{AABCCB})$ in one dimension such that (b) $\pi(\Gamma_0)$ is $\mathsf{AAB}$-extremal. Moreover, (c) for any grid $\Gamma$ it holds that $2\ct(\mathsf{AABCC}, \Gamma) \le \ct (\mathsf{AAB}, \pi(\Gamma))$. So, since $\mathsf{AAB}$ is $2$-stackable, $\mathsf{AABCC}$ is $2$-stackable as well.
    
    \item $\mathsf{AABAA}$ is $2$-stackable: Let $\pi$ be the identity. Note that $C_1(\mathsf{AABAA})=\frac23=C_1(\mathsf{AAB})$. Now, (a) there exists an $\mathsf{AABAA}$-extremal grid $\Gamma_0:=\Grid(\mathsf{AAB})$ in one dimension which (b) is also $\mathsf{AAB}$-extremal. Moreover, (c) for any grid $\Gamma$ it holds that $\ct(\mathsf{AABAA}, \Gamma) \le \ct(\mathsf{AAB}, \Gamma)$. Since $\mathsf{AAB}$ is $2$-stackable, $\mathsf{AABAA}$ is $2$-stackable as well.
    
    \item $\mathsf{ABCAB}$ is $2$-stackable: Let $\pi$ be the projection that fixes $\A$ and $\B$ and takes $\C$ to $\B$. Note that $C_1(\mathsf{ABCAB})=\frac13=\frac12C_1(\mathsf{ABB})$. Now, (a) there exists an $\mathsf{ABCAB}$-extremal grid $\Gamma_0:=\Grid(\mathsf{ABC})$ in one dimension such that (b) $\pi(\Gamma_0)$ is $\mathsf{ABB}$-extremal. Moreover, (c) for any grid $\Gamma$ it holds that $2\ct(\mathsf{ABCAB}, \Gamma) \le \ct( \mathsf{ABB}, \pi(\Gamma))$. Since $\mathsf{ABB}$ is $2$-stackable, $\mathsf{ABCAB}$ is $2$-stackable as well.
\end{itemize}

Finally, two isomorphism classes can be seen to be $2$-stackable by reduction to a word isomorphic to $\mathsf{ABCC}$ using \cref{prop:proj-reductions}. Recall $\mathsf{ABCC}$ is $2$-stackable by \cref{lem:ABCC-2d}.

\begin{itemize}
    \item $\mathsf{AABCB}$ is $2$-stackable. The relevant data are the identity projection $\pi$, the inequality $\ct(\mathsf{AABCB}, \Gamma) \le \ct(\mathsf{AABC}, \Gamma)$ for any grid $\Gamma$, and the grid $\Gamma_0:=\Grid(\mathsf{AABCB})$. 
    
    \item $\mathsf{ABBAC}$ is $2$-stackable. The relevant data are the identity projection $\pi$, the inequality $\ct(\mathsf{ABBAC}, \Gamma)\le \ct(\mathsf{BBAC}, \Gamma)$ for any grid $\Gamma$, and the grid $\Gamma_0:=\Grid(\mathsf{ABBAC})$.
\end{itemize}

We have made no similar attempt to determine precisely which words of six or more letters we can address using our methods.

\section{Fourier analysis}\label{sec:fourier}

In this section, we will show \cref{thm:no-slant,thm:ABB-Zmod3} using Fourier analysis.

We begin by stating our conventions for the Fourier transform. Write $e(x)=e^{2\pi ix}$. Fix positive integers $n$ and $d$, and let $G=(\ZZ/n\ZZ)^d$ be an abelian group. Given a function $f\colon G\to\CC$, we define the \emph{Fourier transform} $\hat f\colon(\ZZ/n\ZZ)^d\to\CC$ of $f$ by
\[\hat f(\xi)=\EE_{x\in G}\left[f(x)e\left(-\frac{\xi\cdot x}n\right)\right]\]
where the expectation is taken over a uniformly random choice of $x\in G$, and the dot product $\xi\cdot x$ denotes $\sum_{i=1}^d \xi_ix_i$. 
We have the following standard identities:
\begin{align}
f(x)&=\sum_\xi \hat f(\xi)e\left(\frac{\xi\cdot x}n\right)\tag{inversion}\\
\EE_{x\in G} |f(x)|^2&=\sum_\xi|\hat f(\xi)|^2\tag{Parseval}.
\end{align}
(In these identities, both sums run over $\xi\in(\ZZ/n\ZZ)^d$.) In our applications, like in \cite{alonkravitz}, the function $f$ will always be closely related to the \emph{indicator function} $1_{A,\Gamma}$ of a letter $A$ in a grid $\Gamma$, i.e.~the function which evaluates to $1$ at the inputs $p$ for which $A=\Gamma(p)$ and $0$ at all other inputs.

\subsection{\texorpdfstring{$\mathsf{ABB}$}{ABB} in grids of shape \texorpdfstring{$(\Zmod{3})^d$}{(Z/3Z)d}} We now prove \cref{thm:ABB-Zmod3}. The key result is the following lemma on functions on $(\ZZ/3\ZZ)^d$, which we will prove using Fourier-analytic techniques.

\begin{lemma}\label{lem:n3ABB} Let $f\colon(\ZZ/3\ZZ)^d\to[0,1]$, and let $\alpha=\hat f(0)=\EE_xf(x)$. Then
\begin{equation}\label{eq:ABB3d-bound}\EE_{x,y}f(x)\big(1-f(x+y)\big)\big(1-f(x+2y)\big)\leq \frac 32\alpha(1-\alpha)^2.
\end{equation}
\end{lemma}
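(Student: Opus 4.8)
The plan is to move to the Fourier side, reduce the statement to a bound on the cubic sum $\sum_{\xi\neq 0}\hat f(\xi)^3$, and then handle each frequency $\xi$ by splitting the group into the three cosets of $\ker\xi$. The naive route (bounding $\sum_{\xi\ne0}\hat f(\xi)^3$ by $\|\hat f\|_\infty\sum_{\xi\ne0}|\hat f(\xi)|^2$ together with $|\hat f(\xi)|\le\tfrac13$) only yields the claim when $\alpha\le\tfrac13$, so the real content is extracting a sharper per-frequency identity that uses the $\ZZ/3\ZZ$-structure.

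First I would expand the left side with $g:=1-f$ and apply Fourier inversion. Averaging over $x$ forces the three frequencies to sum to $0$, and averaging over $y$ (using that $2$ is invertible mod $3$) forces them all equal, giving
\[\EE_{x,y}f(x)\big(1-f(x+y)\big)\big(1-f(x+2y)\big)=\sum_{\xi}\hat f(\xi)\,\hat g(\xi)^2.\]
Since $\hat g(0)=1-\alpha$ and $\hat g(\xi)=-\hat f(\xi)$ for $\xi\neq0$, the $\xi=0$ term is exactly $\alpha(1-\alpha)^2$ and the remaining terms are $\hat f(\xi)^3$. Thus the lemma is equivalent to the (real) inequality $\sum_{\xi\neq0}\hat f(\xi)^3\le\tfrac12\alpha(1-\alpha)^2$.

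The key step is a clean formula for each summand. For fixed $\xi\neq0$, let $a_j=\EE[f(x)\mid \xi\cdot x\equiv j\bmod 3]\in[0,1]$ and $b_j=a_j-\alpha$, so $b_0+b_1+b_2=0$ and $b_j\in[-\alpha,1-\alpha]$. Then $\hat f(\xi)=\tfrac13\sum_j b_j\,e(-j/3)=:z/3$, and a short computation gives $\Re z=\tfrac32 b_0$ and $\Im z=\tfrac{\sqrt3}{2}(b_2-b_1)$; using $\Re(z^3)=\Re z\big((\Re z)^2-3(\Im z)^2\big)$ together with $b_0=-(b_1+b_2)$ collapses everything to
\[\Re\big(\hat f(\xi)^3\big)=\tfrac12\,b_0b_1b_2,\qquad\text{and also}\qquad |\hat f(\xi)|^2=\tfrac16\big(b_0^2+b_1^2+b_2^2\big).\]
Because the total sum is real, $\sum_{\xi\neq0}\hat f(\xi)^3=\tfrac12\sum_{\xi\neq0}b_0^\xi b_1^\xi b_2^\xi$. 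I expect this identity to be the main obstacle: it is what one must discover in order to beat the $\alpha\le\tfrac13$ barrier, and verifying it is the one genuinely computational moment.

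The remainder is an elementary three-variable inequality followed by Parseval. I would prove that whenever $b_0+b_1+b_2=0$ and each $b_j\le 1-\alpha$, one has $b_0b_1b_2\le(1-\alpha)|\hat f(\xi)|^2=\tfrac16(1-\alpha)\sum_j b_j^2$: if the product is nonpositive this is clear, and otherwise exactly one $b_j$ (say $b_0$) is positive with $0<b_0\le1-\alpha$, whereupon dividing by $b_0$ and substituting $b_0=-(b_1+b_2)$ reduces the claim to $2b_1b_2\le b_1^2+b_2^2$, i.e.\ $(b_1-b_2)^2\ge0$. Summing this over $\xi\neq0$ gives
\[\sum_{\xi\neq0}\hat f(\xi)^3=\tfrac12\sum_{\xi\neq0}b_0^\xi b_1^\xi b_2^\xi\le\tfrac12(1-\alpha)\sum_{\xi\neq0}|\hat f(\xi)|^2=\tfrac12(1-\alpha)\big(\EE f^2-\alpha^2\big),\]
and since $f\in[0,1]$ forces $f^2\le f$ and hence $\EE f^2\le\alpha$, the right side is at most $\tfrac12(1-\alpha)\cdot\alpha(1-\alpha)=\tfrac12\alpha(1-\alpha)^2$, which is exactly what is needed.
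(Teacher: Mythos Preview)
Your proof is correct and follows essentially the same route as the paper: Fourier-expand to reduce to $\sum_{\xi\neq 0}\Re(\hat f(\xi)^3)\le\tfrac12\alpha(1-\alpha)^2$, establish the per-frequency bound $\Re(\hat f(\xi)^3)\le\tfrac{1-\alpha}{2}|\hat f(\xi)|^2$, and finish with Parseval together with $\EE f^2\le\EE f=\alpha$. The paper uses the same coset averages (their $\gamma_j=\Re(\omega^j\hat f(\xi))$ are exactly your $b_j/2$, giving $\Re(\hat f(\xi)^3)=4\gamma_0\gamma_1\gamma_2=\tfrac12 b_0b_1b_2$), but reaches the per-frequency bound slightly less directly: it first proves $\Re(z^3)\le\beta^3$ whenever $\Re(\omega^j z)\le\beta$, takes $\beta=\tfrac{1-\alpha}{2}$, and then combines with the trivial $\Re(z^3)\le|z|^3$ via $\min(a^3,b^3)\le ba^2$. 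Your one-step inequality $b_0b_1b_2\le\tfrac16(1-\alpha)\sum b_j^2$ is a cleaner packaging of exactly the same content.
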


\begin{proof}[Proof of \cref{thm:ABB-Zmod3} assuming \cref{lem:n3ABB}] Let $\Gamma$ be any grid of shape $(\ZZ/3\ZZ)^d$ on alphabet $\{\A,\B\}$. We must show $c_d(\mathsf{ABB},\Gamma)\leq 2\cdot 3^{d-2}$. Let $f=1_{\B,\Gamma}$ be the indicator function of $\B$ in $\Gamma$. We have that
\[(x,y)\in\mathcal A(\mathsf{ABB},\Gamma)\Longleftrightarrow f(x)=0,\ f(x+y)=f(x+2y)=1.\]
Therefore,
\begin{align*}
c_d(\mathsf{ABB},\Gamma)
&=\EE_{x\in(\ZZ/3\ZZ)^d}\sum_{y\in\{-1,0,1\}^d\setminus\{\zero\}}f(x)\big(1-f(x+y)\big)\big(1-f(x+2y)\big)\\
&=3^d\EE_{x,y\in(\ZZ/3\ZZ)^d}f(x)\big(1-f(x+y)\big)\big(1-f(x+2y)\big),
\end{align*}
where we have used that $f(x)(1-f(x+y))(1-f(x+2y))=0$ when $y=0$, since $f(x)\in\{0,1\}$. So, by \cref{lem:n3ABB}, we conclude that
\[c_d(\mathsf{ABB},\Gamma)\leq 3^d\cdot \frac32\hat f(0)(1-\hat f(0))^2\leq 3^d\max_{0\leq\alpha\leq 1}\frac 32\alpha(1-\alpha)^2=\frac 293^d=2\cdot 3^{d-2}.\qedhere\]
\end{proof}

Through the remainder of this section we write $\omega:=e(1/3)=e^{2\pi i/3}$. To prove \cref{lem:n3ABB}, we need the following simple inequality.

\begin{lemma}\label{lem:real-part-cube} Let $\beta>0$, and $z$ be a complex number satisfying
\[\Re(z),\Re(\omega z),\Re(\omega^2z)\leq \beta.\]
Then $\Re(z^3)\leq \beta^3$.
\end{lemma}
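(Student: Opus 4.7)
My plan is to reduce the claim to an elementary inequality by first establishing the identity
\[\Re(z^3) \;=\; 4\,\Re(z)\,\Re(\omega z)\,\Re(\omega^2 z).\]
I would verify this either by writing each real part as $\tfrac{w+\bar w}{2}$ and expanding the triple product into eight monomials in $z,\bar z$: the three cross-terms of type $z^2\bar z$ acquire coefficients $1,\omega,\omega^2$ and so sum to zero by $1+\omega+\omega^2=0$, and likewise for the three $z\bar z^2$ terms, leaving only $z^3 + \bar z^3 = 2\Re(z^3)$. Alternatively, writing $z = re^{i\theta}$ this is the classical product-to-sum identity $\cos\theta\cos(\theta+\tfrac{2\pi}{3})\cos(\theta-\tfrac{2\pi}{3}) = \tfrac{1}{4}\cos(3\theta)$.

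Once the identity is in hand, I would set $a = \Re(z)$, $b = \Re(\omega z)$, $c = \Re(\omega^2 z)$. Since $1 + \omega + \omega^2 = 0$, summing gives $a + b + c = 0$, and by hypothesis $a,b,c \leq \beta$. It then suffices to show $4abc \leq \beta^3$.

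This last step is a short case analysis. If $abc \leq 0$, the bound is immediate since $\beta > 0$. Otherwise $abc > 0$; combined with $a+b+c=0$ this forces exactly two of $a,b,c$ to be negative and one positive, so without loss of generality $a,b < 0$ and $c > 0$, giving $c = |a|+|b| \leq \beta$. By AM--GM,
\[4\,|a||b| \;\leq\; (|a|+|b|)^2 \;=\; c^2,\]
and therefore $4abc = 4|a||b|\,c \leq c^3 \leq \beta^3$, as required.

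I do not anticipate a genuine obstacle: the whole argument rests on the identity in the first step, which is a routine algebraic (or trigonometric) calculation, and the remainder is an elementary exercise using $a+b+c=0$ and AM--GM. The only conceptual content is recognizing that the real-part hypothesis naturally couples with $1+\omega+\omega^2 = 0$ to yield both the identity and the zero-sum constraint on $(a,b,c)$.
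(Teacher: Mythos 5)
Your proof is correct and follows essentially the same route as the paper: both rest on the identity $\Re(z^3)=4\,\Re(z)\,\Re(\omega z)\,\Re(\omega^2 z)$, the zero-sum relation $\Re(z)+\Re(\omega z)+\Re(\omega^2 z)=0$, and a sign case analysis finished by AM--GM. The only cosmetic difference is that the paper, in the nontrivial case, picks a positive $\gamma_0$ and applies $4\gamma_1\gamma_2\leq(\gamma_1+\gamma_2)^2$ without determining the signs of the other two factors, whereas you explicitly identify the two-negative-one-positive pattern; both are equally valid.
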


We will apply this to $z=\hat f(\xi)$ for various $\xi$. 

\begin{proof} For $j\in\{0,1,2\}$, let $\gamma_j=\Re(\omega^jz)$. We have $\gamma_0+\gamma_1+\gamma_2=0$ and
\[\gamma_0\gamma_1\gamma_2=\frac 18\left(z+\overline z\right)\left(\omega z+\omega^2\overline z\right)\left(\omega^2z+\omega \overline z\right)=\frac 18\left(z^3+\overline z^3\right)=\frac 14\Re(z^3).\]
If all $\gamma_j$ are nonpositive, then $\Re(z^3)\leq 0$ and we are done. Otherwise, without loss of generality, let $\gamma_0>0$; then
\[\Re(z^3)=4\gamma_0\gamma_1\gamma_2=\gamma_0(4\gamma_1\gamma_2)\leq \gamma_0(\gamma_1+\gamma_2)^2=\gamma_0^3\leq \beta^3.\qedhere\]
\end{proof}

\begin{proof}[Proof of \cref{lem:n3ABB}] We begin by writing the left side of \eqref{eq:ABB3d-bound} in terms of Fourier coefficients. Let $g\colon(\ZZ/3\ZZ)^d\to[0,1]$ be such that $g(x)=1-f(x)$ for all $x$. We compute using Fourier inversion that
\begin{align*}
\EE_{x,y}f(x)g(x+y)g(x+2y)
&=\EE_{x,y}\sum_{\xi_0,\xi_1,\xi_2}\hat f(\xi_0)\hat g(\xi_1)\hat g(\xi_2)\omega^{\xi_0\cdot x+\xi_1\cdot (x+y)+\xi_2\cdot(x+2y)}\\
&=\sum_{\xi_0,\xi_1,\xi_2}\hat f(\xi_0)\hat g(\xi_1)\hat g(\xi_2)\left(\EE_x\omega^{(\xi_0+\xi_1+\xi_2)\cdot x}\right)\left(\EE_y\omega^{(\xi_1+2\xi_2)\cdot y}\right)\\
&=\sum_{\substack{\xi_0,\xi_1,\xi_2\\\xi_0+\xi_1+\xi_2=0\\\xi_1+2\xi_2=0}}\hat f(\xi_0)\hat g(\xi_1)\hat g(\xi_2)=\sum_\xi \hat f(\xi)\hat g(\xi)^2.
\end{align*}
Since $\hat g(0)=1-\alpha$, and $\hat g(\xi)=-\hat f(\xi)$ when $\xi\neq 0$, we conclude that
\begin{equation}\label{eq:fourier-exp}
\EE_{x,y}f(x)\big(1-f(x+y)\big)\big(1-f(x+2y)\big)=\alpha(1-\alpha)^2+\sum_{\xi\neq 0}\hat f(\xi)^3.
\end{equation}
We claim that, for each $\xi\neq 0$,
\begin{equation}\label{eq:fourier-cube}
\Re\left(\hat f(\xi)^3\right)\leq\min\left(|\hat f(\xi)|^3,\frac18(1-\alpha)^3\right).
\end{equation}
Indeed, we compute for each $j\in\ZZ/3\ZZ$ that
\begin{align*}
\EE_x\big[f(x)\mid\xi\cdot x=j\big]
&=3\EE_x\big[f(x)\cdot(1\text{ if }\xi\cdot x=j,\ 0\text{ otherwise})\big]\\
&=\EE_x\left[f(x)\left(1+\omega^{\xi\cdot x-j}+\omega^{-\xi\cdot x+j}\right)\right]\\
&=\alpha+\omega^j\hat f(\xi)+\omega^{-j}\overline{\hat f(\xi)}=\alpha+2\Re\left(\omega^jz\right).
\end{align*}
The fact that $f$ has image in $[0,1]$ implies that this expectation is at most $1$, and so $\Re(\omega^jz)\leq \frac{1-\alpha}2$. Therefore, \cref{lem:real-part-cube}, combined with the simple bound $\Re(\hat f(\xi)^3)\leq|\hat f(\xi)^3|=|\hat f(\xi)|^3$, gives \eqref{eq:fourier-cube}. We thus have, using \eqref{eq:fourier-exp} and Parseval's identity, that
\begin{align*}
\EE_{x,y}f(x)\big(1-f(x+y)\big)\big(1-f(x+2y)\big)
&=\alpha(1-\alpha)^2+\sum_{\xi\neq 0}\Re\left(\hat f(\xi)^3\right)\\
&\leq\alpha(1-\alpha)^2+\sum_{\xi\neq 0}\min\left(|\hat f(\xi)|^3,\left(\frac{1-\alpha}2\right)^3\right)\\
&\leq\alpha(1-\alpha)^2+\sum_{\xi\neq 0}\left(\frac{1-\alpha}2\right)|\hat f(\xi)|^2\\
&=\alpha(1-\alpha)^2+\frac{1-\alpha}2\left(\alpha-\alpha^2\right)=\frac 32\alpha(1-\alpha)^2.\qedhere
\end{align*}
\end{proof}

\subsection{Slantability}\label{subsec:fourier-slant}

We now show \cref{thm:no-slant}, i.e.~that a word of length $\ell$ cannot be $d$-slantable for any $d$ much larger than $\log\ell$. The structure of the proof is as follows. Assume that a word $w$ of length $\ell$ is $d$-slantable.
\begin{enumerate}
    \item Find a large grid $\Theta_0$ in which $w$ has near-optimal concentration.
    
    \item Fix some $n=\ell^{O(1)}$. Take a random ``subgrid'' $\Theta$ with shape $(\ZZ/n\ZZ)^d$ from $\Theta_0$. Show that, with positive probability, every search line $\Gamma$ of $\Theta$ has nearly the maximal concentration of $w$.

    \item Apply our one-dimensional stability result (\cref{prop:1d-stable}) to conclude that each of these search lines has nearly the same letter distribution.

    \item Using Fourier analysis, show that it is impossible for every search line in a grid of shape $(\ZZ/n\ZZ)^d$ to have nearly the same letter distribution.
\end{enumerate}

We begin by stating the two main lemmas, corresponding to steps (2) and (4) in the above outline.

\begin{lemma}\label{lem:slantable} Suppose that a word $w$ is $d$-slantable for some $d$, and let $\ell=\len(w)$. Then, for each positive integer $n$, there exists a grid $\Theta$ of shape $(\ZZ/n\ZZ)^d$ in which every search line contains at least $C_1(w)(n-4d\ell)$ copies of $w$.
\end{lemma}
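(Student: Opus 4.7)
My plan is to construct $\Theta$ as a translated ``window'' subgrid of a very large near-extremal grid. Fix a small $\eta > 0$. Since $w$ is $d$-slantable, $C_d(w) = \frac{3^d - 1}{2} C_1(w)$, and by starting with any grid witnessing $c_d(w, \Gamma_0) > C_d(w) - \eta$ and repeatedly enlarging coordinates to a common value, one obtains a grid $\Theta_0$ of shape $(\ZZ/N\ZZ)^d$ with $c_d(w, \Theta_0) > C_d(w) - \eta$ for any sufficiently large multiple $N$ of $n$. For each $\bt \in (\ZZ/N\ZZ)^d$, define a window subgrid $\Theta_\bt$ of shape $(\ZZ/n\ZZ)^d$ by $\Theta_\bt(\bs) = \Theta_0(\bt + \bs')$, where $\bs' \in [0, n)^d$ is the canonical lift of $\bs$. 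Non-wrapping appearances of $w$ in $\Theta_\bt$ (meaning those whose cells do not wrap in the $(\ZZ/n\ZZ)^d$-torus) correspond bijectively to appearances of $w$ in $\Theta_0$ whose cells lie entirely inside $\bt + [0, n)^d$.

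Next, I would control each search line of $\Theta_\bt$ on average over $\bt$. Fix a line $L$ of $(\ZZ/n\ZZ)^d$ in a direction $\bv$ with $\|\bv\|_1 = k$; the non-wrap count of $w$ on $L$ in $\Theta_\bt$ decomposes into counts on at most $k \le d$ arithmetic sub-progressions of a line of $\Theta_0$ (one per toroidal boundary $L$ crosses). A double-counting argument over $\bt$ then yields
\[\EE_\bt\big[\ct(w, L \text{ in } \Theta_\bt)\big] \;\ge\; C_1(w)\,(n - d(\ell-1)) - O(\eta\, n),\]
using that near-slantability of $\Theta_0$, combined with the tightness of the upper-bound argument in \cref{prop:simple-d-bound}, forces the average count per line of $\Theta_0$ in each unsigned direction to approach the maximum $C_1(w) N$. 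Since each line's count is bounded above by $C_1(w) n$, the expected deficit $C_1(w) n - \ct(w, L \text{ in } \Theta_\bt)$ is at most $C_1(w)\, d\ell + o(1)$, so Markov's inequality gives $\Pr_\bt[\ct(w, L) < C_1(w)(n - 4d\ell)] \le \tfrac{1}{4}(1+o(1))$ for each individual $L$.

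The hard part is to pick a single $\bt$ such that \emph{every} search line of $\Theta_\bt$ simultaneously satisfies the bound. A naive union bound across the $\tfrac{3^d - 1}{2}\, n^{d - 1}$ search lines is far too weak, and this is where I expect the main technical effort. The likely resolution exploits strong positive correlation of the ``bad'' events across lines: a defective window $\Theta_\bt$ typically arises from $\bt$ landing in a sparse set of ``defects'' in $\Theta_0$, so a single bad $\bt$ tends to hurt many lines at once rather than producing isolated failures. Formalizing this could proceed either by (a) averaging additionally over the isometry group of $(\ZZ/n\ZZ)^d$ to symmetrize the per-line statistics, or (b) a direct cleanup argument that modifies $\Theta_\bt$ at the few cells contained in lines with excess deficit, using the slack in the factor $4$ of $4d\ell$ to absorb the resulting damage to neighboring good lines. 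Either way, the main technical obstacle is upgrading from ``average bound per line'' to ``worst-case bound over all lines simultaneously.''
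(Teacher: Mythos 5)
Your setup — take a near-extremal grid $\Theta_0$ of shape $(\ZZ/N\ZZ)^d$ with deficit $\eta$, pass to a random translated window of shape $(\ZZ/n\ZZ)^d$, and decompose each search line of the window into at most $d$ segments lying along search lines of $\Theta_0$ — is exactly the paper's setup. The gap is in your final step, and it is a genuine one: you conclude via Markov's inequality that each line fails with probability about $\tfrac14$, correctly observe that a union bound over the $\sim \tfrac{3^d-1}{2}n^{d-1}$ lines is then hopeless, and gesture at correlation, symmetrization, or cleanup arguments to rescue it. None of these is developed, and none is needed. The point you are missing is that the per-line failure probability is not a constant: it can be made \emph{linear in the deficit of the ambient grid}, and that deficit is a free parameter (only $n$ is fixed in the lemma; $d$-slantability supplies grids $\Theta_0$ with deficit $\eps$ as small as you like). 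Concretely, the paper proves a one-dimensional tail bound (\cref{lem:1d-subgrid}): if a grid of shape $\ZZ/N\ZZ$ has concentration $C_1(w)-\delta$, then the probability that a uniformly random length-$n$ window contains fewer than $C_1(w)(n-4\ell)$ copies of $w$ is at most $\ell n\delta$. The proof is not an expectation computation: one extracts from the bad window-starts a subset of density $\ge 1/n$ whose windows are pairwise disjoint, and counts copies of $w$ globally — copies inside bad windows, copies away from them, and copies crossing window boundaries — to show that many bad windows would force the whole line's deficit to exceed $\delta$. With this tail bound, the naive union bound you dismissed works verbatim: choosing $\eps=(\ell n^{d+1}3^d)^{-1}$, the total failure probability over all pairs $(p,\bv)$ is at most $\ell n^{d+1}(3^d-1)\eps<1$.

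Why your Markov route cannot be repaired as stated: your bound $\Pr[\text{deficit}>4C_1(w)d\ell]\le\tfrac14$ conflates two sources of deficit — the deterministic boundary loss of order $C_1(w)d\ell$ (which the $4d\ell$ term exists to absorb) and the excess deficit inherited from $\Theta_0$ (which is what should be rare). Markov applied to their sum can never beat a constant, no matter how small $\eta$ is, because the boundary term alone is a constant fraction of the threshold. Your fix (a), averaging over isometries, does not change the marginal probability of any single line failing, so it cannot improve a union bound; your fix (b), editing cells of a bad window, has no control on the damage to the many lines through each edited cell. The correct resolution is to sharpen the individual tail estimate, separating the boundary effect from the excess deficit, and then let $\eps\to 0$ — at which point positive correlation between lines is irrelevant.
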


\begin{lemma}\label{lem:density-along-lines} Let $A$ be a letter, and let $\Theta$ be a grid of shape $(\ZZ/n\ZZ)^d$ which contains $\alpha n^d$ copies of $A$, where $\alpha\in[0,1]$. Suppose $n<2^d$. Then there exist two search lines of $\Theta$ in which the number of copies of $A$ differ by at least
\[\frac{\sqrt{\min(\alpha,1-\alpha)}}{3^{d/2}}n.\]
\end{lemma}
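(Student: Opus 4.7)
My plan is to carry out a Fourier analysis of $f := \mathbf 1_{A, \Theta} \colon G \to \{0, 1\}$, where $G = (\ZZ/n\ZZ)^d$. We have $\hat f(\zero) = \alpha$, and Parseval gives $\sum_{\xi \ne \zero} |\hat f(\xi)|^2 = \EE_x|f(x)|^2 - \alpha^2 = \alpha(1-\alpha)$. For each $\bv \in V := \{-1,0,1\}^d \setminus \{\zero\}$, some coordinate of $\bv$ equals $\pm 1$, so $\bv$ has order exactly $n$ in $G$ and every search line in direction $\bv$ has length $n$. Setting $L_{\bv,p} := \sum_{t=0}^{n-1} f(p + t\bv)$, Fourier inversion and the geometric sum identity $\sum_{t=0}^{n-1} e(t \xi \cdot \bv / n) = n \cdot \mathbf 1[\xi \cdot \bv = 0]$ give $L_{\bv,p} = n \sum_{\xi \cdot \bv = 0} \hat f(\xi) e(\xi \cdot p / n)$, from which orthogonality of characters yields
\[\text{Var}_p L_{\bv, p} = n^2 \sum_{\substack{\xi \ne \zero \\ \xi \cdot \bv = 0}} |\hat f(\xi)|^2.\]

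The key step, which I expect to be the main obstacle and which is the origin of the hypothesis $n < 2^d$, is a Littlewood--Offord-style pigeonhole: for every $\xi \in G$ there exists some $\bv \in V$ with $\xi \cdot \bv = 0$. To prove this, consider the $2^d$ subset sums $s_S := \sum_{i \in S} \xi_i \in \ZZ/n\ZZ$ as $S$ ranges over subsets of $\{1, \ldots, d\}$. Since $n < 2^d$, pigeonhole furnishes distinct subsets $S_1 \ne S_2$ with $s_{S_1} = s_{S_2}$, and the vector $\bv$ defined by $\bv_i = 1$ for $i \in S_1 \setminus S_2$, $\bv_i = -1$ for $i \in S_2 \setminus S_1$, and $\bv_i = 0$ otherwise, lies in $V$ and satisfies $\xi \cdot \bv = 0$. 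Summing the variance identity over $\bv \in V$ then counts each nonzero $\xi$ at least once, so
\[\sum_{\bv \in V} \text{Var}_p L_{\bv,p} \;\ge\; n^2 \sum_{\xi \ne \zero} |\hat f(\xi)|^2 \;=\; n^2 \alpha(1-\alpha),\]
and some $\bv^\star \in V$ achieves $\text{Var}_p L_{\bv^\star, p} \ge n^2 \alpha(1-\alpha)/(3^d - 1)$.

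To convert this into a range bound, I will use the elementary identity $\EE_{i,j}(X_i - X_j)^2 = 2\,\text{Var}(X)$ for a random variable on a finite sample space, which forces $\max_i X_i - \min_i X_i \ge \sqrt{2\,\text{Var}(X)}$. Applied to $L_{\bv^\star, \cdot}$, together with the bound $\alpha(1-\alpha) \ge \min(\alpha, 1-\alpha)/2$, this yields
\[\max_p L_{\bv^\star, p} - \min_p L_{\bv^\star, p} \;\ge\; n \sqrt{\frac{2\alpha(1-\alpha)}{3^d - 1}} \;\ge\; n \sqrt{\frac{\min(\alpha, 1-\alpha)}{3^d - 1}} \;\ge\; \frac{\sqrt{\min(\alpha, 1-\alpha)}}{3^{d/2}}\, n.\]
Since $L_{\bv^\star, \cdot}$ is constant on cosets of $\langle \bv^\star \rangle$ in $G$, the arg-max and arg-min of $p \mapsto L_{\bv^\star, p}$ necessarily lie on distinct cosets, and these two cosets are the required two distinct search lines of $\Theta$.
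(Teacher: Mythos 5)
Your proposal is correct and follows essentially the same route as the paper's proof: a Fourier-analytic second-moment computation of the line sums $\sum_{x\in\Gamma}f(x)=n\sum_{\xi\cdot\bv=0}\hat f(\xi)e(\xi\cdot p/n)$, combined with the subset-sum pigeonhole over $\{0,1\}^d$ (which is exactly how the paper exploits $n<2^d$ to get $\Pr[\xi\cdot\bv=0]\geq 2/(3^d-1)$). The only differences are cosmetic bookkeeping --- you work with the uncentered indicator and convert variance to a max-minus-min range within a single best direction, whereas the paper centers $f$, averages over all search lines, and compares the extreme line to one on the other side of the mean --- and both yield the same final bound.
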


The proof of \cref{lem:slantable} is a conceptually simple (although notationally complex) application of the probabilistic method, while the proof of \cref{lem:density-along-lines}, which is the crux of this argument, requires Fourier analysis. 

\begin{proof}[Proof of \cref{thm:no-slant} given \cref{lem:slantable} and \cref{lem:density-along-lines}] Suppose $w$ is $d$-stackable with $d>10$, and let $\ell=\len(w)$. Let $n=2^d-1$, and let $\Theta$ be a grid of shape $(\ZZ/n\ZZ)^d$ given by \cref{lem:slantable}, so that every search line has at least $(n-4d\ell)\cdot C_1(w)$ copies of $w$. Let $\delta=4d\ell/n$, and let $h_w$ be the letter distribution of a one-dimensional extremal configuration for $w$, unique by \cref{lem:same-dist}. By \cref{prop:1d-stable}, for each search line $\Gamma$ in $\Theta$, the letter distribution $h_\Gamma$ of $\Gamma$ satisfies
\begin{equation}\label{eq:distribution-difference}
\TV{h_\Gamma-h_w}\leq\delta.
\end{equation}
Let $A$ be the letter in $w$ which appears the fewest times. Then $1/\ell\leq h_w(A)\leq 1/2$. Let $\alpha$ be the proportion of letters in $\Theta$ equal to $A$. Since \eqref{eq:distribution-difference} holds for $\Theta$ as well as individual search lines (by averaging $h_w$), we have
\[\frac1\ell-\delta\leq \alpha\leq \frac12+\delta\leq1-\frac1\ell+\delta.\]
By \cref{lem:density-along-lines}, there exist two search lines of $\Theta$ in which the number of copies of $A$ differ by at least
\[\frac{\sqrt{\min(\alpha,1-\alpha)}}{3^{d/2}}n\geq \frac{\sqrt{\frac1\ell-\delta}}{3^{d/2}}n.\]
On the other hand, by applying the triangle inequality to \eqref{eq:distribution-difference}, the number of copies of $A$ in any two search lines cannot differ by more than $2\delta n$. We conclude that
\[2\delta\geq \frac1{3^{d/2}}\sqrt{\frac1\ell-\delta}.\]
Rearranging we obtain $4\delta^23^d+\delta\geq1/\ell$. If $\delta\geq\frac1{2\ell}$, then $8d\ell^2\geq n=2^d-1$, which implies $d\leq 3\log_2\ell+10$. If, on the other hand, $4\delta^23^d\geq\frac1{2\ell}$, then
\[128d^2\ell^33^d\geq n^2>2^{2d-1}\implies 256d^2\ell^3>\left(\frac43\right)^d.\]
This gives $d<8\log_2\ell+47$, as desired.
\end{proof}

We now prove the two lemmas. To prove \cref{lem:slantable}, we will need the following intermediate result about subgrids of near-extremal one-dimensional grids.

\begin{lemma}\label{lem:1d-subgrid} Let $n$ and $N$ be positive integers, let $\Gamma$ be a grid of shape $\ZZ/N\ZZ$, and let $w$ be a word of length $\ell$. Suppose that $c_1(w,\Gamma)=C_1(w)-\delta$ for some $\delta\in[0,1]$. Let $t$ be chosen uniformly at random from $\ZZ/N\ZZ$, and let
\[\Gamma'_t:=\Grid\big(\Gamma(t)\Gamma(t+1)\cdots\Gamma(t+n-1)\big).\]
Then, with probability at least $1-\ell n\delta$, $c_1(w,\Gamma'_t)>\left(1-\frac{4\ell}n\right)C_1(w)$.
\end{lemma}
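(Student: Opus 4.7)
The plan is to extract from $\Gamma$ a ``structured'' subset $V \subseteq \ZZ/N\ZZ$ with $|V^c| \leq O(\ell\delta N)$ on which $\Gamma$ already agrees with an extremal one-dimensional pattern, and then argue that every window $[t,t+n-1]\subseteq V$ satisfies $c_1(w,\Gamma'_t)>(1-4\ell/n)C_1(w)$. Since each position in $V^c$ lies in at most $n$ windows, the number of ``bad'' $t$ will then be at most $n|V^c|\leq O(\ell n\delta N)$, and dividing by $N$ will give the probability bound. I may assume $\ell n\delta<1$ throughout, as the statement is vacuous otherwise.

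The construction of $V$ will follow the proof of \cref{prop:1d-stable}. I will first treat the palindrome case in detail. Let $S\subseteq\ZZ/N\ZZ$ be the set of leftmost indices of appearances of $w$ in $\Gamma$, so that $|S|=N_w(\Gamma)/2$. The condition $(S-S)\cap[1,a-1]=\emptyset$ with $a:=\ell-c_{\mathrm{repeat}}$, established in the proof of \cref{prop:1d}, implies the intervals $[s,s+a-1]$ for $s\in S$ are pairwise disjoint; setting $V:=\bigcup_{s\in S}[s,s+a-1]$ gives $|V|=a|S|$ and $|V^c|=a\delta N/2\leq\ell\delta N/2$. Moreover, because at each $s\in S$ the letters in $[s,s+\ell-1]$ spell $w$ (or $w^{\mathrm{rev}}=w$, since $w$ is a palindrome), the labels on $V$ form repeated copies of $v:=w_1w_2\cdots w_a$, and so $\Gamma$ restricted to $V$ locally matches the extremal periodic pattern $v^\infty$. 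For non-palindromic words, I will build $V$ (or a union $V_1\cup V_2$) analogously using the sets introduced in the proof of \cref{prop:1d-stable}; the key input will be that $C_1(w)\geq 1/\ell$, so the multiplicative deficit $\delta/C_1(w)$ is at most $\ell\delta$, and this will yield the same order of bound $|V^c|\leq O(\ell\delta N)$.

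For the good-window verification, if $[t,t+n-1]\subseteq V$ then the window lies inside a maximal contiguous block of $V$, whose labels form a contiguous substring of $v^\infty$ of length $n$. Forward appearances of $w$ in such a substring occur precisely at the positions congruent to $0\pmod a$ (in the appropriate offset), so the number of valid starting positions in $[t,t+n-\ell]$ is at least $\lfloor(n-\ell+1)/a\rfloor\geq(n-\ell+1-a)/a$. After doubling for backward appearances in the palindrome case, this yields
\[N_w(\Gamma'_t)\geq 2(n-\ell+1-a)/a=(n-\ell+1-a)\,C_1(w)>(n-4\ell)\,C_1(w),\]
where the final strict inequality uses $a\leq\ell$. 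Hence $c_1(w,\Gamma'_t)>(1-4\ell/n)C_1(w)$ for every such $t$. The ``slack'' factor of $4\ell$ (rather than the naive $\ell$) in the statement is exactly what is needed to absorb the constant factors that come up in the non-palindromic cases.

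The main obstacle will be the non-palindromic case (b) of \cref{prop:1d-stable}, where the extremal pattern $(u_2u_1^{\mathrm{rev}})^\infty$ interlocks forward and backward appearances with period $b+c$ and $V$ must be assembled from both $V_1$ and $V_2$. There I will need to verify both the size bound $|V^c|\leq O(\ell\delta N)$ (using the inclusion-exclusion identity $|V_1\cup V_2|\geq|V_1|+|V_2|-N$ together with the values $|V_1|+|V_2|=2N(1-\delta/C_1(w))$ computed in \cref{sec:1d}) and that contiguous portions of $V$ really are labeled consistently with the extremal $(u_2u_1^{\mathrm{rev}})^\infty$ pattern. The essential combinatorial heavy lifting has already been done in the proof of \cref{prop:1d-stable}, so this should amount to adapting those arguments from the ``total letter distribution'' setting to the ``subwindow'' setting.
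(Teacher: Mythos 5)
Your proposal takes a fundamentally different route from the paper, and it has a genuine gap precisely in the part you defer. The paper's proof uses no structural information about near-extremal grids whatsoever: it is a short global double-counting argument. Letting $T_0$ be the set of bad starting points, the paper extracts $T_1\subset T_0$ with $|T_1|\geq|T_0|/n$ whose windows are pairwise disjoint, and then bounds $\ct(w,\Gamma)$ by splitting the copies of $w$ into those lying inside the union of these bad windows (at most $|T_1|\,C_1(w)(n-4\ell)$, by badness), those lying in the complement (at most $C_1(w)(N-n|T_1|)$, by the definition of $C_1(w)$), and those crossing a boundary (at most $4(\ell-1)|T_1|\,C_1(w)$). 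Comparing with $\ct(w,\Gamma)=C_1(w)N-\delta N$ forces $4C_1(w)|T_1|\leq \delta N$, hence $|T_0|\leq \ell nN\delta$, with no case analysis and no appeal to \cref{prop:1d-structure} or \cref{prop:1d-stable}. Your palindrome case is essentially correct (and the slack $4\ell$ indeed absorbs the boundary terms there), but the structural claim driving your whole argument fails in the non-palindromic case (b).

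Concretely, your claim that every window contained in $V$ has concentration greater than $(1-4\ell/n)C_1(w)$ is false. Take $w=\mathsf{ABCAA}$: then $a=\ell-c_{\mathrm{repeat}}=4$, $b=4$, $c=3$, so $2a>b+c$ (case (b)) and $C_1(w)=2/7$. The periodic pattern $(\mathsf{AACB})^m$ consists of backward copies of $w$ at spacing $a=4$ and contains no forward copies, so a window of length $n$ inside it has concentration exactly $1/4=\tfrac78C_1(w)$, a deficit bounded away from zero independently of $n$; it is therefore bad once $n>32\ell$. Yet this region is entirely tiled by the intervals $[\tau,\tau+3]$, $\tau\in T$, belonging to the set $V_1$ of the proof of \cref{prop:1d-stable} (here $r=3$, $r'=4$), so it lies inside your $V$. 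Splicing a run of length about $n$ of this pattern into a huge grid that is otherwise $(\mathsf{ABCAACB})^K$ makes $\delta$ arbitrarily small while producing many windows inside $V$ that violate your claim. The root cause is that when $b+c$ is odd, same-direction appearances may tile a region at spacing $a=\lceil(b+c)/2\rceil$ with local density $1/a$, which is a constant factor below $C_1(w)=2/(b+c)$; no refinement of the interval-counting can recover the lost factor, since the deficit is real. Two further problems: in case (a), when $c>a$ the translates $T+(a-c),\ldots,T+(2a-c-1)$ contain positions strictly to the left of the span $[\tau,\tau+\ell-1]$ of the backward appearance at $\tau$, so $V$ can contain letters determined by no appearance at all (try $\mathsf{SALSA}$, where $a=3$, $c=4$), and your assertion that $\Gamma$ restricted to $V$ matches an extremal pattern fails there too; and even where your counting works, your inclusion--exclusion only gives $|V^c|\leq 2\ell\delta N$, i.e.\ failure probability at most $2\ell n\delta$ rather than the stated $\ell n\delta$.
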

\begin{proof} If $\delta\leq\frac1{\ell n}$, the result is trivial, so we henceforth assume $\delta<\frac1{\ell n}$. Let
\[T_0=\big\{t : \ct(w,\Gamma'_t)<C_1(w)(n-4\ell)\big\}.\]
There is a subset $T_1\subset T_0$ of size at least $|T_0|/n$ with ``gaps'' of length at least $n$, i.e.~for which $\{T_1+k : 0\leq k<n\}$ are all disjoint. Let $U_1=T_1\sqcup(T_1+1)\sqcup\cdots\sqcup (T_1+n-1)$, and let $U_2=(\ZZ/N\ZZ)\setminus U_1$. We now count the copies of $w$ in $\Gamma$, considered via their locations as subsets of $\ZZ/N\ZZ$ of size $\ell$:
\begin{itemize}
    \item Some copies are contained entirely within $U_1$. These copies are in $\Gamma'_t$ for some $t\in T_1$, and thus there are at most
    \begin{equation}\label{eq:U1-count}
    \sum_{t\in T_1}\ct(w,\Gamma_t')\leq |T_1|\big(C_1(w)(n-4\ell)\big)
    \end{equation}
    such copies.

    \item Some copies are contained entirely within $U_2$. As $U_2$ consists of only $N-n|T_1|$ letters, there are at most
    \begin{equation}\label{eq:U2-count}
    C_1(w)\big(N-n|T_1|\big)
    \end{equation}
    such copies of $w$, by the definition of $C_1(w)$. (We, rather crudely, upper bound the number of copies of $w$ contained entirely in $U_2$ by the count of $w$ in the grid of shape $\ZZ/|U_2|\ZZ$ filled by the letters of $U_2$ in order.)

    \item Lastly, some copies are neither contained fully in $U_1$ nor $U_2$. Each such copy must contain some portion of the ``boundary'' between $U_1$ and $U_2$; that is, there exists some $t\in T_1$ so that the copy either contains $\{t,t+1\}$ or $\{t+n-1,t+n\}$. So, each copy of $w$ neither in $U_1$ nor $U_2$ is contained within
    \[\bigcup_{t\in T_1}\{t-\ell+2,\ldots,t+\ell-1\}\cup\bigcup_{t\in T_1}\{t+n-\ell+1,\ldots,t+n+\ell-2\}.\]
    This consists of at most $4(\ell-1)|T_1|$ letters, and so there are at most
    \begin{equation}\label{eq:U12-count}
    C_1(w)\big(4(\ell-1)|T_1|\big)
    \end{equation}
    copies of $w$.
\end{itemize}

Summing \eqref{eq:U1-count}, \eqref{eq:U2-count}, and \eqref{eq:U12-count} gives that
\[C_1(w)N-N\delta=\ct(w,\Gamma)\leq C_1(w)N-4C_1(w)|T_1|.\]
We conclude that $|T_1|\leq \frac{N\delta}{4C_1(w)}\leq \ell N\delta$, so $|T_0|\leq \ell n N\delta$. This means that the probability that $c_1(w,\Gamma_t')>C_1(w)(n-4\ell)$ is at most $\ell n\delta$, as desired.    
\end{proof}

\begin{proof}[Proof of \cref{lem:slantable}] Let $C=C_1(w)$. Since $w$ is $d$-slantable, for every $\eps>0$ there exists some positive integer $N$ and a grid $\Theta_0$ of shape $(\ZZ/N\ZZ)^d$ for which
\[\ct(w,\Theta_0)\geq (C-\eps)\frac{3^d-1}2N^d.\]
We select $\eps=(\ell n^{d+1}3^d)^{-1}$.

Choose $(t_1,\ldots,t_d)\in(\ZZ/N\ZZ)^d$ uniformly at random, and let $\Theta$ be the grid of shape $(\ZZ/n\ZZ)^d$ given by
\[\Theta\big((u_1,\ldots,u_d)\big)=\Theta_0\big((u_1+t_1,\ldots,u_d+t_d)\big)\]
for $0\leq u_1,\ldots,u_d<n$. For $p\in(\ZZ/n\ZZ)^d$ and $\bv\in\{-1,0,1\}^d\setminus\{\zero\}$, let $\Gamma_{p,\bv}$ be the search line of $\Theta$ through $p$ in the direction of $\bv$. The letters in $\Gamma_{p,\bv}$ consist of at most $d$ strings of letters along search lines in the direction of $\bv$ in $\Theta_0$. The lengths $n_1,\ldots,n_k$ of these strings depend only on $(p,\bv)$ and sum to $n$, and the starting points of these strings are each uniformly distributed in $(\ZZ/N\ZZ)^d$ (each is a translate of $(t_1,\ldots,t_d)$ by a fixed vector). If one of our strings, say, one of length $n_j$, is conditioned to lie in a particular search line $\Gamma_0$ with $c_1(w,\Gamma_0)=C_1(w)-\delta$, then \cref{lem:1d-subgrid} gives that the concentration of $w$ within this string is at least $\left(1-\frac{4\ell}{n_j}\right)C_1(w)$ with probability at least $1-\ell n_j\delta$. Let $\eps_\bv$ be so that
\[\EE_{\Gamma_0}c_1(w,\Gamma_0)=C_1(w)-\eps_\bv,\]
where $\Gamma_0$ is chosen uniformly at random among the search lines of $\Theta_0$ in the direction of $\bv$. By taking the expectation over $\Gamma_0$, the count of $w$ within this string in $\Gamma_{p,\bv}$ is at least $(n_j-4\ell)C_1(w)$ with probability at least $1-\ell n_j\eps_\bv$. Taking a union bound over all of the strings in $\Gamma_{p,\bv}$, we have
\[\Pr\left[c_1(w,\Gamma_{p,\bv})<\left(1-\frac{4d\ell}n\right)C_1(w)\right]\leq \ell n\eps_\bv.\]
Union-bounding over all pairs $(p,\bv)$, the probability that some search line has concentration of $w$ at most $\left(1-\frac{4d\ell}n\right)C_1(w)$ is at most
\[\ell n^{d+1}\sum_{\bv}\eps_{\bv}=\ell n^{d+1}(3^d-1)\eps<1.\]
So, there is a positive probability that every search line in $\Theta$ has at least $C_1(w)(n-4d\ell)$ copies of $w$, as desired.
\end{proof}

Finally, we prove \cref{lem:density-along-lines}. 

\begin{proof}[Proof of \cref{lem:density-along-lines}] The proof is essentially a second moment argument, using Fourier analysis. Let $G=(\ZZ/n\ZZ)^d$. Let $\mathcal L$ be a set of search lines in $\Theta$, each considered as a set of elements of $G$. Note that $|\mathcal L|=n^{d-1}\frac{3^d-1}2$. 

Let $f=1_{A,\Theta}-\alpha$ evaluate to $1-\alpha$ at appearances of $A$ and $-\alpha$ at appearances of other letters. By the definition of $\alpha$, $\hat f(0)=0$. Let $\zeta=e(1/n)=e^{2\pi i/n}$. For the search line $\Gamma\in\mathcal L$ in the direction $\bv$ containing $p$, we have
\begin{align*}
\sum_{x\in\Gamma}f(x)=\sum_{x\in\Gamma}\sum_\xi\hat f(\xi)\zeta^{\xi\cdot x}
&=\sum_\xi\hat f(\xi)\sum_{k=0}^{n-1}\zeta^{\xi\cdot(p+k\bv)}\\
&=\sum_\xi\hat f(\xi)\zeta^{\xi\cdot p}[n\text{ if }\xi\cdot\bv=0,\ 0\text{ otherwise}]=n\sum_{\xi : \xi\cdot\bv=0}\hat f(\xi)\zeta^{\xi\cdot p}.
\end{align*}
Fix $\bv\in\{-1,0,1\}^d\setminus\{\zero\}$. Let $\Gamma$ be a random search line of $\Theta$ in the direction $\bv$, and let $p$ be a random point in $\Gamma$. Using the above, we compute
\begin{align*}
\EE_{\Gamma\text{ in dir.\ }\bv}\left|\frac1n\sum_{x\in\Gamma}f(x)\right|^2
&=\EE_p\left|\sum_{\xi : \xi\cdot\bv=0}\hat f(\xi)\zeta^{\xi\cdot p}\right|^2\\
&=\EE_p\sum_{\substack{\xi_1,\xi_2\\\xi_1\cdot\bv=\xi_2\cdot\bv=0}}\hat f(\xi_1)\overline{\hat f(\xi_2)}\zeta^{\xi_1\cdot p}\zeta^{-\xi_2\cdot p}\\
&=\sum_{\substack{\xi_1,\xi_2\\\xi_1\cdot\bv=\xi_2\cdot\bv=0}}\hat f(\xi_1)\overline{\hat f(\xi_2)}\EE_p\zeta^{(\xi_1-\xi_2)\cdot p}=\sum_{\xi : \xi\cdot\bv=0}|\hat f(\xi)|^2.
\end{align*}
Now, consider $\Gamma$ sampled from the set $\mathcal L$ of \emph{all} search lines of $\Theta$, uniformly at random. We have
\begin{align*}
\EE_{\Gamma\in\mathcal L}\left|\frac1n\sum_{x\in\Gamma}f(x)\right|^2
&=\EE_{\bv\in\{-1,0,1\}^d\setminus\{\zero\}}\EE_{\Gamma\text{ in dir. }\bv}\left|\frac1n\sum_{x\in\Gamma}f(x)\right|^2\\
&=\EE_\bv\sum_{\xi : \xi\cdot\bv}|\hat f(\xi)|^2\\
&=\sum_{\xi}|\hat f(\xi)|^2\Pr[\xi\cdot\bv=0].
\end{align*}
Take any $\xi\in(\ZZ/n\ZZ)^d$. The dot products $\xi\cdot\bu$ for $\bu\in\{0,1\}^d$ cannot all be distinct, since $n<2^d$. So, there are two distinct vectors $\bu,\bu'\in\{0,1\}^d$ with $\xi\cdot(\bu-\bu')=0$. The vectors $\bv=\pm(\bu-\bu')$ lie in $\{-1,0,1\}^d\setminus\{\zero\}$, and so $\Pr[\xi\cdot\bv=0]\geq \frac 2{3^d-1}$. We conclude that
\[\EE_{\Gamma\in\mathcal L}\left|\frac1n\sum_{x\in\Gamma}f(x)\right|^2\geq\frac 2{3^d-1}\sum_{\xi}|\hat f(\xi)|^2=\frac 2{3^d-1}\EE_x|f(x)|^2=\frac{2\alpha(1-\alpha)}{3^d-1}.\]
In particular, there exists a search line $\Gamma$ of $\Theta$ with
\[\left|\sum_{x\in\Gamma}f(x)\right|\geq\frac{\sqrt{2\alpha(1-\alpha)}}{(3^d-1)^{1/2}}n>\frac{\sqrt{\min(\alpha,1-\alpha)}}{3^{d/2}}n.\]
By the definition of $f$, we have
\[\sum_{x\in\Gamma}f(x)=(\#\text{ of copies of }A\text{ in }\Gamma)-\alpha n.\]
The result follows from the fact that there exist search lines with at most $\alpha n$ copies of $A$ and search lines with at least $\alpha n$ copies of $A$.
\end{proof}

\begin{remark} In many cases, a positive answer to \cref{qn:extremal-exist} would allow us to circumvent \cref{lem:slantable}. Indeed, if $w$ is $d$-slantable and a $w$-extremal grid $\Theta$ exists, then every search line of the grid is a one-dimensional $w$-extremal grid. If $w$ falls into case (a) or (b) of \cref{prop:1d-structure}, then every one-dimensional $w$-extremal grid is periodic with some fixed period $n<2\ell$, and so $\Theta$ is equivalent to a grid of shape $(\ZZ/n\ZZ)^d$. From here \cref{lem:density-along-lines} allows us to directly conclude that $2^d<n$, i.e.~$d<\log_2\ell+1$.

However, if $w$ falls into case (c), the above argument fails, since $\Theta$ need not be equivalent to a grid of shape $(\ZZ/n\ZZ)^d$ for some $n$ whose size is controlled by $\ell$. Since the argument of \cref{lem:density-along-lines} relies crucially on the bound $n<2^d$, it is still necessary to reduce $\Theta$ to a smaller grid, which may not be exactly $w$-extremal. We do not know how to substantially improve the bound in \cref{thm:no-slant} under an assumption of \cref{qn:extremal-exist} in this case.
\end{remark}
\section{\texorpdfstring{$d$}{d}-slantable words and connections with modular nonattacking queens}\label{sec:not-stack}

In this section, we explore words which achieve the upper bound of \cref{prop:simple-d-bound}. Some of these configurations are closely related with the well-studied problem of placing as many queens as possible on a chessboard of a certain shape such that no two of the queens attack each other. 

The $n$-queens problem is one of the most famous problems in combinatorics. It was first posed in 1848, in the context of placing 8 queens on a standard 8-by-8 chessboard such that no two attack each other, but has since spawned numerous generalizations, most immediately to the problem of placing $n$ queens on an $n$-by-$n$ chessboard such that no two attack each other. (For brevity, we simply say that queens in such a configuration are \textit{nonattacking}.) Bell and Stevens have surveyed the $n$-queens problem and its many variations in great detail \cite{bellstevens}. 

It is natural that there might be connections between our problem and $n$-queens, as queens attack in the directions of our search lines. Specifically, the problem of what Bell and Stevens call ``modular $n$-queens," in which the attack lines for the queens wrap around the edges of the chessboard, is the relevant one for us, as we take search lines to wrap around the edges of the chessboard. Classical results about modular $n$-queens quickly give us important results about slantability.

\begin{proposition}\label{prop:ABn 2 slantable}
    Let $w= \mathsf{AB}^{\ell-1}$ for some $n>1$. Then $w$ is 2-slantable, i.e., $C_2(w) = 4 C_1(w)$, if $\gcd(\ell, 6) = 1$.
\end{proposition}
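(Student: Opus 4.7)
My proof plan proceeds by matching the upper bound in \cref{prop:simple-d-bound} via an explicit construction linked to modular non-attacking queens.

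First I would compute $C_1(w)$ for $w = \mathsf{A}\mathsf{B}^{\ell-1}$ using \cref{prop:1d}. The relevant quantities are $c_{\mathrm{left}} = 1$ (only the single letter $\A$ is a palindromic prefix, since the second letter $\B$ differs from the first), $c_{\mathrm{right}} = \ell - 1$ (the whole suffix $\mathsf{B}^{\ell-1}$ is a palindrome), and $c_{\mathrm{repeat}} = 0$ (every nonempty proper prefix contains $\A$ while no proper suffix does). Since $w$ is not a palindrome, \cref{prop:1d} gives
\[C_1(w) = \max\!\left(\tfrac{1}{\ell}, \tfrac{1}{\ell - \ell/2}\right) = \tfrac{2}{\ell}.\]
Combining this with the upper bound of \cref{prop:simple-d-bound} for $d=2$ gives $C_2(w) \le 4C_1(w) = 8/\ell$, so it suffices to construct a two-dimensional grid achieving this concentration.

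Next I would build such a grid from a \emph{modular $\ell$-queens} configuration. Take $\Gamma$ of shape $\Zmod{\ell} \times \Zmod{\ell}$, with $\Gamma(i,j) = \A$ if $j \equiv 2i \pmod{\ell}$ and $\Gamma(i,j) = \B$ otherwise. The claim is that any two distinct $\A$-cells $(i_1, 2i_1)$ and $(i_2, 2i_2)$ lie on different modular rows, columns, and both modular diagonals. Since $\gcd(\ell, 6) = 1$, both $2$ and $3$ are units in $\Zmod{\ell}$: equal columns force $i_1 \equiv i_2$; equal rows force $2i_1 \equiv 2i_2$ hence $i_1 \equiv i_2$; and the diagonal condition $2i_1 - 2i_2 \equiv \pm(i_1 - i_2) \pmod{\ell}$ reduces either to $i_1 \equiv i_2$ or to $3(i_1 - i_2) \equiv 0$, which again forces $i_1 \equiv i_2$. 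So no two $\A$-cells share any of the eight search lines through them.

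Finally I would count appearances of $w$. For each of the $\ell$ occurrences of $\A$ and each of the $8$ directions $\bv \in \{-1,0,1\}^2 \setminus \{\zero\}$, the search line through the $\A$ in direction $\bv$ has modular length $\ell$ and contains only this single $\A$; hence the next $\ell - 1$ cells in that direction are all $\B$, giving an appearance of $\mathsf{A}\mathsf{B}^{\ell-1}$. This yields $8\ell$ appearances in total, and
\[c_2(w, \Gamma) = \frac{8\ell}{\ell^2} = \frac{8}{\ell} = 4C_1(w),\]
matching the upper bound and establishing $2$-slantability.

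The main conceptual step is identifying the $n$-queens connection: the upper bound from \cref{prop:simple-d-bound} is tight exactly when we can place a single "scarce" letter whose copies cover all $\frac{3^d-1}{2}$ unsigned directions with no collisions, and in dimension $2$ this is precisely the modular $n$-queens problem. No real difficulty remains once this translation is made, since the explicit configuration $j \equiv 2i \pmod \ell$ is the classical solution and the hypothesis $\gcd(\ell, 6) = 1$ is exactly what makes $2$ and $3$ invertible. The underlying one-dimensional computation via \cref{prop:1d} is routine.
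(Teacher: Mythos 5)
Your proof is correct and follows essentially the same route as the paper: both establish the upper bound $C_2(w)\le 4C_1(w)$ via \cref{prop:simple-d-bound} (with $C_1(w)=2/\ell$) and match it with a grid whose $\A$'s sit at a modular $\ell$-queens configuration, so that each of the $\ell$ copies of $\A$ starts an appearance of $\mathsf{AB}^{\ell-1}$ in all $8$ directions. The only difference is that the paper cites P\'olya's theorem as a black box, whereas you inline P\'olya's explicit construction $j\equiv 2i\pmod{\ell}$ and verify the nonattacking property directly from $\gcd(\ell,6)=1$, making the argument self-contained.
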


\begin{proof}
    P\'olya famously proved in \cite{polya} that there exists a configuration of $n$ queens on a chessboard of shape $\Zmod{n}\times \Zmod{n}$ such that none of the queens attack each other for $n$ with $\gcd(n, 6)=1$. Consider a grid $\Gamma$ of shape $\Zmod{\ell}\times \Zmod{\ell}$, such that there are $\ell$ $\A$'s, at the positions of $\ell$ nonattacking queens, and the rest of the squares are labeled with $\B$. The condition that the positions at which $\A$ appears are the positions of $\ell$ nonattacking queens means that $\ell-1$ $\B$'s appear after each $\A$ in every direction in which words can read. Therefore $c_2(\mathsf{AB}^{\ell-1}, \Gamma) = 8/\ell = 4 C_1(\mathsf{AB}^{\ell-1})$, as desired. For an illustration, see \cref{fig:nonattacking queens,fig:nonattacking queens grid}.
\end{proof}

\begin{figure}
    \begin{tikzpicture}[scale=0.5]
  \foreach \x in {0,1,2,3,4,5,6} {
    \foreach \y in {0,1,2,3,4,5,6} {
      \ifodd\numexpr\x+\y\relax
        \fill[gray!30] (\x,\y) rectangle ++(1,1);
      \else
        \fill[white] (\x,\y) rectangle ++(1,1);
      \fi
    }
  }
  \draw[step=1cm, black] (0,0) grid (7,7);
  \foreach \x in {0,1,2,3,4,5,6} {
    \node at (\x+0.5,-0.5) {\scriptsize \x};
    \node at (-0.5,\x+0.5) {\scriptsize \x};
  }

  \node at (0.5,6.5) {Q};
  \node at (2.5,5.5) {Q};
  \node at (4.5,4.5) {Q};
  \node at (6.5,3.5) {Q};
  \node at (1.5, 2.5) {Q};
  \node at (3.5, 1.5) {Q};
  \node at (5.5, 0.5) {Q};
\end{tikzpicture} 
\caption{7 nonattacking queens on a modular 7x7 chessboard.}
\label{fig:nonattacking queens}\end{figure}
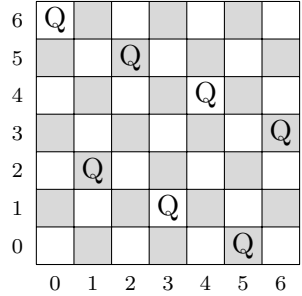

\begin{figure}
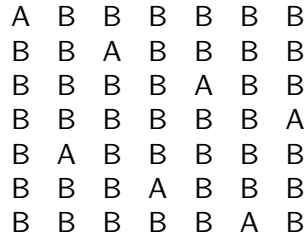
$\begin{array}{ccccccc}
\A & \B & \B & \B & \B & \B & \B \\
\B & \B & \A & \B & \B & \B & \B \\ 
\B & \B & \B & \B & \A & \B & \B \\ 
\B & \B & \B & \B & \B & \B & \A \\ 
\B & \A & \B & \B & \B & \B & \B \\ 
\B & \B & \B & \A & \B & \B & \B \\ 
\B & \B & \B & \B & \B & \A & \B \\ 

\end{array}$
\caption{A grid showing $\mathsf{ABBBBBB}$ is 2-slantable. The positions of the $\mathsf{A}$'s correspond to the nonattacking queens in \cref{fig:nonattacking queens}.}
\label{fig:nonattacking queens grid}
\end{figure}

Relating our problem to $n$-queens in higher dimensions requires choosing a convention for how which squares of a higher-dimensional chessboard a queen may attack. We allow queens to attack along search lines; that is, a queen in position $p$ can attack square $p'$ if $p'-p$ is a scalar multiple of some element of $\{-1,0,1\}^d\setminus\{\zero\}$. This is discussed in \cite[Section~6]{bellstevens}. (Other conventions are available, most notably that of \cite{nudelman}, in which queens attack in hyperplanes orthogonal to search lines rather than in search lines themselves.) 

P\'olya's construction generalizes naturally to $d$ dimensions with this convention. This generalization allows us to prove \cref{prop:queen-words}, which states that the word $w=\mathsf{AB}^{\ell-1}$ for some $\ell>1$ is $d$-slantable if $\gcd(\ell,(2^d)!) = 1$.

\begin{proof}[Proof of \cref{prop:queen-words}]
    Consider the grid $\Gamma$ of shape $(\Zmod{\ell})^d$ such that the letter $\A$ appears at the positions of the form
    \[\left(\sum_{i=1}^{d-1} 2^{i}\alpha_i, \alpha_1, \ldots, \alpha_{d-1}\right)\]
    for $0\le \alpha_i < \ell$, and the letter $\B$ appears at all other positions. Note in particular that all $\ell^{d-1}$ of these positions are distinct.
    
    Beginning with each $\A$, there is an appearance of $w$ reading in each of the $3^d-1$ possible directions in which words can appear. To see this, it suffices to show that, for any distinct positions $p_1, p_2$ at which the letter $\A$ appears, the difference $p_2-p_1$ is not proportional to $\bv$ for any $\bv \in \{-1, 0, 1\}^d \setminus\{\zero\}$.

    Write $p_1 = (\sum_{i=1}^{d-1} 2^{i}\alpha_i, \alpha_1, \ldots, \alpha_{d-1})$ and $p_2 = (\sum_{i=1}^{d-1} 2^{i}\beta_i, \beta_1, \ldots, \beta_{d-1})$, and suppose that $p_2-p_1=c\bv$ for some $\bv\in \{-1, 0, 1\}^d \setminus\{\zero\}$. We will show that $p_1=p_2$. Indeed, we have $\beta_i - \alpha_i = \eps_i c$ for all $i$, where $\eps_i\in\{1,0,-1\}$. Additionally,
    $\sum_{i=1}^{d-1} 2^i(\beta_i - \alpha_i) = \eps_0 c$, so
    \[c\left(-\eps_0 + \sum_{i=1}^{d-1} 2^i \eps_i\right)\equiv 0\pmod \ell.\]
    However, $|(-\eps_0 + \sum_{i=1}^{d-1} 2^i \eps_i)|\leq 2^d-1$ is a factor of $(2^d)!$. Since $\gcd(\ell,(2^d)!)=1$, this implies $c\equiv 0\pmod\ell$, and thus $p_1=p_2$.

    Therefore there are a total of $\ell^{d-1}(3^d-1)$ appearances of $w$ in $\Gamma$, which is of size $\ell^d$. We conclude $c_d(w,\Gamma)=\frac{3^d-1}{\ell}$. Since $C_1(w)=\frac2{\ell}$, we conclude that $w$ is $d$-slantable.
\end{proof}

Explicating the connection between $\A\B^{\ell-1}$ and the $\ell$-queens problem, we have the following immediate corollary from the proof of \cref{prop:queen-words}.

\begin{corollary}\label{cor:nd-1queens}
    For all $n$ and all $d$, if $\gcd(n,(2^d)!)=1$, then one can place $n^{d-1}$ nonattacking queens on the grid of shape $(\Zmod{n})^d$.
\end{corollary}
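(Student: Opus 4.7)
The plan is to extract the key combinatorial fact already established inside the proof of \cref{prop:queen-words} and observe that it is literally the non-attacking condition for queens. Recall that queens in direction $\bv \in \{-1,0,1\}^d\setminus\{\zero\}$ attack precisely the cells obtained by adding integer multiples of $\bv$ to the queen's position; equivalently, two queens at positions $p_1,p_2$ attack each other if and only if $p_2 - p_1$ is proportional to some such $\bv$ in $(\Zmod{n})^d$.

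Given this reformulation, I would place queens at exactly the $n^{d-1}$ positions
\[
Q \;=\; \left\{\left(\sum_{i=1}^{d-1} 2^{i}\alpha_i,\ \alpha_1, \ldots, \alpha_{d-1}\right) : 0 \le \alpha_i < n\right\}
\]
already used to place the letter $\A$ in the proof of \cref{prop:queen-words} (with $\ell$ replaced by $n$). First I would note that these $n^{d-1}$ positions are pairwise distinct: the last $d-1$ coordinates already determine $(\alpha_1,\ldots,\alpha_{d-1})$ uniquely modulo $n$. Next, I would invoke verbatim the calculation in the proof of \cref{prop:queen-words}: for any two distinct $p_1,p_2 \in Q$, writing $\beta_i - \alpha_i \equiv \eps_i c \pmod n$ with $\eps_i \in \{-1,0,1\}$ under the assumption $p_2 - p_1 \equiv c\bv \pmod n$, the first coordinate gives
\[
c\left(-\eps_0 + \sum_{i=1}^{d-1} 2^i \eps_i\right) \equiv 0 \pmod n,
\]
and since $\bigl|-\eps_0 + \sum 2^i \eps_i\bigr| \le 2^d - 1$ is a factor of $(2^d)!$, coprimality forces $c \equiv 0 \pmod n$ and hence $p_1 = p_2$, a contradiction.

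Thus no two points of $Q$ differ by a scalar multiple of any $\bv \in \{-1,0,1\}^d\setminus\{\zero\}$, which is exactly the condition that the corresponding queens are mutually non-attacking on $(\Zmod{n})^d$. This gives the desired configuration of $n^{d-1}$ non-attacking queens. Since the entire argument is essentially an unpacking of the proof of \cref{prop:queen-words}, there is no real obstacle; the only thing to be careful about is to explicitly record that the number $-\eps_0 + \sum_{i=1}^{d-1} 2^i \eps_i$ has absolute value at most $2^d - 1$, so that it is indeed coprime to $n$ under the hypothesis $\gcd(n,(2^d)!) = 1$.
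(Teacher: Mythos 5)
Your proposal is correct and matches the paper's own proof: the paper likewise places queens at the positions of the $\A$'s from the proof of \cref{prop:queen-words} and observes that the non-proportionality condition established there is precisely the non-attacking condition. You simply unpack that calculation explicitly (including the distinctness of the positions and the bound $\bigl|-\eps_0+\sum_{i=1}^{d-1}2^i\eps_i\bigr|\le 2^d-1$), which is exactly what the paper relies on by reference.
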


\begin{proof}
    Place the queens at the positions where the letter $\A$ appears in the above proof. Then, the condition that the difference between any two of these positions is not proportional to $\bv$ for any $\bv\in \{-1, 0, 1\}^d \setminus\{\zero\}$ is exactly the conditions that the queens do not attack each other. The result follows.
\end{proof}

In fact, \cref{cor:nd-1queens} gives one direction of \cite[Conjecture 25]{bellstevens}. As far as we know, this proof has not yet appeared in the literature, though it is likely already known: it is a straightforward generalization of the argument for $d=3$ which appears in \cite{klarner}. For more on the $d=3$ case of this argument, see \cite[Theorem~20]{bellstevens} or \cite[Theorem~5]{kunt}.

We again invoke the $n$-queens problem to prove \cref{prop:ABell-1}. 

\begin{proof}[Proof of \cref{prop:ABell-1}]
    Monsky showed in \cite{monsky} that, for any $\ell$, there exists a configuration of $\ell-2$ nonattacking queens on a modular $\ell$-by-$\ell$ board. Consider the grid $\Gamma$ of shape $(\Zmod{\ell})^2$ with $\A$'s at the locations of the nonattacking queens and $\B$'s on the other tiles. Then, the word $w$ appears $8(\ell-2)$ times in the grid $\Gamma$, so $c_2(w, \Gamma) = 8(\ell-2)/\ell^2$. We have $3C_1(w) = 6/\ell,$ so if $8(\ell-2)/\ell^2 > 6/\ell$, we have immediately that $w$ is not $2$-stackable. Therefore $w$ is not $2$-stackable for $\ell>8$.

    For $\ell=4$, the left grid in \cref{fig:ABBB} is a grid $\Gamma$ with $c_2(w, \Gamma) > 3C_1(w)$, showing that $w$ is not $2$-stackable. For $\ell=5$ and $\ell=7$, \cref{prop:ABn 2 slantable} shows $C_2(w) = 4C_1(w) >3C_1(w),$ so $w$ is not $2$-stackable. Finally, \cref{fig:ABn larger n} shows example grids that demonstrate that $w$ is not $2$-stackable for $\ell=6$ and $\ell=8$.
\begin{figure}
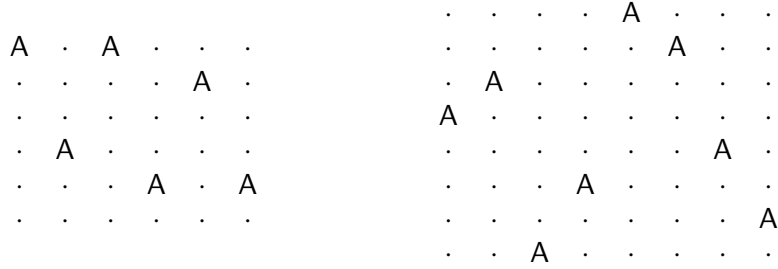

    $\begin{array}{cccccc}
        \A& \cdot & \A & \cdot & \cdot & \cdot \\ 
        \cdot & \cdot & \cdot & \cdot & \A & \cdot \\
        \cdot & \cdot & \cdot & \cdot & \cdot & \cdot \\ 
        \cdot & \A & \cdot & \cdot & \cdot & \cdot \\ 
        \cdot & \cdot & \cdot & \A & \cdot & \A \\ 
        \cdot & \cdot & \cdot & \cdot & \cdot & \cdot \\
    \end{array}$ \hspace{1.8cm} $\begin{array}{cccccccc} 
\cdot & \cdot & \cdot & \cdot & \A & \cdot & \cdot & \cdot \\ 
\cdot & \cdot & \cdot & \cdot & \cdot & \A & \cdot & \cdot \\ 
\cdot & \A & \cdot & \cdot & \cdot & \cdot & \cdot & \cdot \\ 
\A & \cdot & \cdot & \cdot & \cdot & \cdot  & \cdot & \cdot\\ 
\cdot &\cdot & \cdot & \cdot & \cdot & \cdot & \A & \cdot \\ 
\cdot & \cdot & \cdot & \A & \cdot & \cdot & \cdot & \cdot\\ 
\cdot & \cdot & \cdot & \cdot & \cdot & \cdot & \cdot & \A\\ 
\cdot & \cdot & \A & \cdot & \cdot & \cdot & \cdot & \cdot\\  \end{array}$

    \caption{On the left, a grid $\Gamma$ with $c_2(\mathsf{ABBBBB}, \Gamma) = 40/36 > 3 \cdot 2/6 = 3C_1(\mathsf{ABBBBB})$. On the right, a grid $\Gamma$ with $c_2(\mathsf{ABBBBBBB}, \Gamma) = 56/64 > 3 \cdot 2/8 = 3C_1(\mathsf{ABBBBBBB})$. (For legibility, we use $\cdot$ to denote the positions of the $\B$'s.)}
    \label{fig:ABn larger n}
\end{figure}
\end{proof}

We make no claims that the constructions we used to prove \cref{prop:ABell-1} are extremal.

\section{Conclusion and directions for future work}\label{sec:conclusion}

While we have introduced some methods to answer a number of cases of \cref{qn:main}, many interesting questions still remain. We highlight some of them here.

We begin by repeating \cref{qn:extremal-exist}: does there exist, for each word $w$ and each dimension $d$, a $d$-dimensional $w$-extremal grid? We have answered this question in the affirmative for $d=1$, and for $d=2$ for all words of length at most $4$ besides $\mathsf{ABBB}$. Two particularly interesting ``small'' cases for which \cref{qn:extremal-exist} remains open are $C_2(\mathsf{ABBB})$ and $C_3(\mathsf{ABB})$. (\cref{qn:main} also remains open for these two cases.) As remarked upon in \cref{subsec:intro-additive}, this question has some similarities to the periodic tiling conjecture; it is not entirely unreasonable to suggest that its answer might be negative in high dimension.

Relatedly, our investigation of short words leaves some natural questions, many of which we have gestured at in the previous sections.

\begin{question} Which $5$-letter words are $2$-stackable? 
\end{question}

Partial progress towards answering this question is given in \cref{subsec:short-5}. This question can naturally be posed for words of length 6 or longer, of course, as well.

In higher dimensions, there are even shorter words which remain open.

\begin{question}\label{qn:ABB} Is $\mathsf{ABB}$ $d$-stackable for every $d$?
\end{question}

By \cref{lem:ABB}, this question has an affirmative answer for $d=2$, and by \cref{thm:ABB-Zmod3}, the answer is affirmative for every $d$ if restricted to grids of shape $(\Zmod{3})^d$. However, we do not even know if $\mathsf{ABB}$ is 3-stackable.

Additionally, there are still short words whose properties in two dimensions we do not fully understand. For instance, the word $\mathsf{ABBB}$ is neither $2$-stackable (see \cref{fig:ABBB}) nor $2$-slantable (see \cref{sec:appendix}), and is the only four-letter word, up to equivalence, which is not $2$-stackable. As such, we are particularly interested in the computation of $C_2(\mathsf{ABBB})$.

\begin{question}\label{qn:ABBB} Does $C_2(\mathsf{ABBB})=8/5$?
\end{question}

In general, the study of words of the form $\mathsf{AB}^{\ell-1}$ for various $\ell$ in various dimensions seems a particularly interesting special case.

Our results focus on small classes of words for which we can prove concentration bounds, but we have no results about long words in general, or asymptotic estimates as to ``typical'' behavior for long words. We ask some questions along these lines.

\begin{question} What proportion of long words (say, on the letters $\A$ and $\B$) are $d$-stackable? $2$-stackable? What proportion are $d$-slantable?
\end{question}

Finally, we proved a structural result and a stability result for grids in one dimension, but we have made no attempt to prove results of this kind for grids in two or more dimensions, which admit more exotic classes of extremal constructions. One example of such behavior was given in \cite{alonkravitz} for the word $\mathsf{CAT}$, which admits two inequivalent extremal constructions. The word $\mathsf{ABB}$ does as well: see \cref{fig:two ABBs}.

\begin{figure}
    $\begin{array}{ccc}
        \A & \B & \B \\
        \B & \A & \B \\
        \B & \B & \A \\
    \end{array}$ \hspace{1.8cm} 
    $\begin{array}{ccc}
    \A & \B & \B \\
    \A & \B & \B \\
    \A & \B & \B \\
    \end{array}
    $\caption{Two inequivalent $\mathsf{ABB}$-extremal grids in two dimensions.}
    \label{fig:two ABBs}
\end{figure}

There are plenty of questions about structure and stability in $d$ dimensions that one can ask. We record two here, which generalize \cref{lem:same-dist} and \cref{prop:1d-stable}, respectively.

\begin{question}\label{qn:same-dist-ndim}
    Given $w$ and $d$, do all $w$-extremal grids in $d$ dimensions have the same letter distribution?
\end{question}

\begin{question}\label{qn:stability-ndim}
    Given $w$ and $d$, does there exist a letter distribution $h_{w,d}$ such that if $\Gamma$ is a $d$-dimensional grid satisfying $c_d(w, \Gamma) \ge C_d(w)(1-\delta)$, then $\TV{h_\Gamma-h_{w,d}} \le \delta$?
\end{question}

We note that $h_{w,d}$, if it exists, cannot generally be independent of the dimension. To see this, consider $w=\mathsf{BABBB}$. By \cref{lem:BABBB}, if $h_{w,d}$ exists for $d=1$ and $d=2$, it must be that $h_{w,1}(\A)=1/4$ and $h_{w,2}(\A)=1/5$. We have phrased \cref{qn:stability-ndim} in essentially the strongest quantitative form possible; the answer to \cref{qn:stability-ndim} may well be negative, and weaker versions of the same property would also be interesting.

\section*{Acknowledgments}

This research began at the 2023 University of Minnesota Duluth REU, with support from Jane
Street Capital, the National Security Agency, and the National Science Foundation (Grants 2052036
and 2140043). Schildkraut is supported by the National Science Foundation Graduate Research Fellowship Program under Grant No.~DGE-2146755.

We are extremely grateful to Noah Kravitz, who suggested the problem and provided an immense amount of guidance. We also thank Sam Spiro for some helpful comments. This article additionally benefited from a number of useful conversations with participants and visitors at the 2023 Duluth REU, including, but not limited to, Evan Chen, Mitchell Lee, Katherine Tung, and Daniel Zhu. Finally, we thank Colin Defant and Joe Gallian for organizing the REU and providing many helpful suggestions.

\bibliographystyle{abbrv}
\bibliography{bib.bib}

@article {alonkravitz,
    AUTHOR = {Alon, Noga and Kravitz, Noah},
     TITLE = {Cats in cubes},
   JOURNAL = {Electron. J. Combin.},
  FJOURNAL = {Electronic Journal of Combinatorics},
    VOLUME = {31},
      YEAR = {2024},
    NUMBER = {3},
     PAGES = {Paper No. 3.29, 7},
      ISSN = {1077-8926},
   MRCLASS = {05D05 (05A05)},
  MRNUMBER = {4798516},
MRREVIEWER = {Miodrag\ \v Zivkovi\'c},
       DOI = {10.37236/11735},
       URL = {https://doi.org/10.37236/11735},
}

@article {bellstevens,
    AUTHOR = {Bell, Jordan and Stevens, Brett},
     TITLE = {A survey of known results and research areas for {$n$}-queens},
   JOURNAL = {Discrete Math.},
  FJOURNAL = {Discrete Mathematics},
    VOLUME = {309},
      YEAR = {2009},
    NUMBER = {1},
     PAGES = {1--31},
      ISSN = {0012-365X,1872-681X},
   MRCLASS = {05-02 (05C75)},
  MRNUMBER = {2474997},
       DOI = {10.1016/j.disc.2007.12.043},
       URL = {https://doi.org/10.1016/j.disc.2007.12.043},
}

@article {nudelman,
    AUTHOR = {Nudelman, Scott P.},
     TITLE = {The modular {$n$}-queens problem in higher dimensions},
   JOURNAL = {Discrete Math.},
  FJOURNAL = {Discrete Mathematics},
    VOLUME = {146},
      YEAR = {1995},
    NUMBER = {1-3},
     PAGES = {159--167},
      ISSN = {0012-365X,1872-681X},
   MRCLASS = {11B75 (05B99)},
  MRNUMBER = {1360114},
MRREVIEWER = {P.\ Erd\H os},
       DOI = {10.1016/0012-365X(94)00161-5},
       URL = {https://doi.org/10.1016/0012-365X(94)00161-5},
}

@article {greenfeldtao,
    AUTHOR = {Greenfeld, Rachel and Tao, Terence},
     TITLE = {A counterexample to the periodic tiling conjecture},
   JOURNAL = {Ann. of Math. (2)},
  FJOURNAL = {Annals of Mathematics. Second Series},
    VOLUME = {200},
      YEAR = {2024},
    NUMBER = {1},
     PAGES = {301--363},
      ISSN = {0003-486X,1939-8980},
   MRCLASS = {52C22 (05B40 52C23)},
  MRNUMBER = {4768420},
MRREVIEWER = {Dirk\ Frettl\"oh},
       DOI = {10.4007/annals.2024.200.1.5},
       URL = {https://doi.org/10.4007/annals.2024.200.1.5},
}

@article {klarner,
    AUTHOR = {Klarner, David A.},
     TITLE = {Queen squares},
   JOURNAL = {J. Recreational Math.},
  FJOURNAL = {Journal of Recreational Mathematics},
    VOLUME = {12},
      YEAR = {1979/80},
    NUMBER = {3},
     PAGES = {177--178},
      ISSN = {0022-412x},
   MRCLASS = {05B15 (05B30)},
  MRNUMBER = {566884},
MRREVIEWER = {J.\ B.\ Kelly},
}

@mastersthesis{kunt,
  title        = {The $n$-Queens Problem in Higher Dimensions},
  author       = {Tim Kunt},
  year         = 2023,
  month        = {December},
  address      = {Berlin},
  note         = {Available at arXiv:2406.06260},
  school       = {Technische Universität Berlin},
  type         = {Master's thesis}
}

@article {monsky,
    AUTHOR = {Monsky, Paul},
     TITLE = {Problems and {S}olutions: {S}olutions of {E}lementary
              {P}roblems: {E}3162},
   JOURNAL = {Amer. Math. Monthly},
  FJOURNAL = {American Mathematical Monthly},
    VOLUME = {96},
      YEAR = {1989},
    NUMBER = {3},
     PAGES = {258--259},
      ISSN = {0002-9890,1930-0972},
   MRCLASS = {99-04},
  MRNUMBER = {1541477},
       DOI = {10.2307/2325220},
       URL = {https://doi.org/10.2307/2325220},
}

@article {patchellspiro,
    AUTHOR = {Patchell, Gregory and Spiro, Sam},
     TITLE = {The maximum number of appearances of a word in a grid},
   JOURNAL = {Amer. Math. Monthly},
  FJOURNAL = {American Mathematical Monthly},
    VOLUME = {129},
      YEAR = {2022},
    NUMBER = {5},
     PAGES = {415--434},
      ISSN = {0002-9890,1930-0972},
   MRCLASS = {05D99 (05B15)},
  MRNUMBER = {4419398},
MRREVIEWER = {R.\ M.\ Falc\'on},
       DOI = {10.1080/00029890.2022.2040319},
       URL = {https://doi.org/10.1080/00029890.2022.2040319},
}

@inproceedings{polya,
    AUTHOR = {Polya, G.},
    TITLE = {{\"U}ber die ``doppelt-periodischen'' {L}\"osungen des $n$-{D}amen-{P}roblems},
    BOOKTITLE = {Mathematische Unterhaltungen und Spiele},
    EDITOR = {Ahrens, W. and Teubner, B. G.},
    YEAR = {1921},
    VOLUME = {2},
    PUBLISHER = {Leipzig}
}

@book {Tao,
    AUTHOR = {Tao, Terence},
     TITLE = {Higher order {F}ourier analysis},
    SERIES = {Graduate Studies in Mathematics},
    VOLUME = {142},
 PUBLISHER = {American Mathematical Society, Providence, RI},
      YEAR = {2012},
     PAGES = {x+187},
      ISBN = {978-0-8218-8986-2},
   MRCLASS = {11B30 (11L07 11U07 37A45)},
  MRNUMBER = {2931680},
MRREVIEWER = {David\ Conlon},
       DOI = {10.1090/gsm/142},
       URL = {https://doi.org/10.1090/gsm/142},
}

@article {BKM,
    AUTHOR = {Bhangale, Amey and Khot, Subhash and Minzer, Dor},
     TITLE = {Effective bounds for restricted 3-arithmetic progressions in
              {$\mathbb F^n_p$}},
   JOURNAL = {Discrete Anal.},
  FJOURNAL = {Discrete Analysis},
      YEAR = {2024},
     PAGES = {no. 16, 22},
      ISSN = {2397-3129},
   MRCLASS = {11B25 (05D10)},
  MRNUMBER = {4844554},
MRREVIEWER = {Ben\ Joseph\ Green},
}

@article {ACMVZ,
    AUTHOR = {Ambrus, Gergely and Csisz\'arik, Adri\'an and Matolcsi,
              M\'at\'e{} and Varga, D\'aniel and Zs\'amboki, P\'al},
     TITLE = {The density of planar sets avoiding unit distances},
   JOURNAL = {Math. Program.},
  FJOURNAL = {Mathematical Programming},
    VOLUME = {207},
      YEAR = {2024},
    NUMBER = {1-2},
     PAGES = {303--327},
      ISSN = {0025-5610,1436-4646},
   MRCLASS = {52C10 (42B05 52C17 90C05)},
  MRNUMBER = {4783196},
MRREVIEWER = {Dmitry\ Ryabogin},
       DOI = {10.1007/s10107-023-02012-9},
       URL = {https://doi.org/10.1007/s10107-023-02012-9},
}

@article {Szekely,
    AUTHOR = {Sz\'ekely, L. A.},
     TITLE = {Measurable chromatic number of geometric graphs and sets
              without some distances in {E}uclidean space},
   JOURNAL = {Combinatorica},
  FJOURNAL = {Combinatorica. An International Journal of the J\'anos Bolyai
              Mathematical Society},
    VOLUME = {4},
      YEAR = {1984},
    NUMBER = {2-3},
     PAGES = {213--218},
      ISSN = {0209-9683},
   MRCLASS = {52A37 (05C15 51M20)},
  MRNUMBER = {771730},
MRREVIEWER = {K.\ J.\ Falconer},
       DOI = {10.1007/BF02579223},
       URL = {https://doi.org/10.1007/BF02579223},
}

@article {Razborov,
    AUTHOR = {Razborov, Alexander A.},
     TITLE = {Flag algebras},
   JOURNAL = {J. Symbolic Logic},
  FJOURNAL = {The Journal of Symbolic Logic},
    VOLUME = {72},
      YEAR = {2007},
    NUMBER = {4},
     PAGES = {1239--1282},
      ISSN = {0022-4812,1943-5886},
   MRCLASS = {03C13},
  MRNUMBER = {2371204},
MRREVIEWER = {Manuel\ Bodirsky},
       DOI = {10.2178/jsl/1203350785},
       URL = {https://doi.org/10.2178/jsl/1203350785},
}

@article {decortepikhurko,
    AUTHOR = {DeCorte, E. and Pikhurko, O.},
     TITLE = {Spherical sets avoiding a prescribed set of angles},
   JOURNAL = {Int. Math. Res. Not. IMRN},
  FJOURNAL = {International Mathematics Research Notices. IMRN},
      YEAR = {2016},
    NUMBER = {20},
    VOLUME = {2016},
     PAGES = {6095--6117},
      ISSN = {1073-7928,1687-0247},
   MRCLASS = {28A75 (33C45 33C55 52C10)},
  MRNUMBER = {3579959},
       DOI = {10.1093/imrn/rnv319},
       URL = {https://doi.org/10.1093/imrn/rnv319},
}

@article {bkov,
    AUTHOR = {Bekker, Bram and Kuryatnikova, Olga and de Oliveira Filho, Fernando Mário and Vera, Juan C.},
     TITLE = {Optimization hierarchies for distance-avoiding sets in compact spaces},
   JOURNAL = {Trans. AMS},
  FJOURNAL = {Transactions of the American Mathematical Society},
      YEAR = {2025},
    VOLUME = {in press},
}

@misc{lmpv,
      title={On the density of {K}ravitz sets}, 
      author={Vsevolod Lev and Máté Matolcsi and Péter Pál Pach and Dániel Varga},
      eprint={2510.16522},
      archivePrefix={arXiv},
      primaryClass={math.NT},
      url={https://arxiv.org/abs/2510.16522},
      note = {arXiv:2510.16522}
}

@article{clarkson,
author = {Clarkson, Kenneth L.},
title = {Las {V}egas algorithms for linear and integer programming when the dimension is small},
year = {1995},
issue_date = {March 1995},
publisher = {Association for Computing Machinery},
address = {New York, NY, USA},
volume = {42},
number = {2},
issn = {0004-5411},
url = {https://doi.org/10.1145/201019.201036},
doi = {10.1145/201019.201036},
journal = {J. ACM},
month = mar,
pages = {488–499},
numpages = {12}
}

\appendix

\section{Local analysis: casework and additional results}\label{sec:appendix}

We have deferred two computations and one example arising from our ``local analysis'' strategy to this appendix.

The first and most central of these is a by-hand verification of the computation of $C_2(\mathsf{ABB})$, encapsulated in \eqref{eq:ABB-local}. This is covered by the following lemma. We strongly encourage the reader who wishes to read the following proof to draw some of the relevant diagrams.

\begin{lemma}\label{lem:ABB-by-hand} Let $F\colon \ZZ^2\times(\{-1,0,1\}^2\setminus\{\zero\})\to\mathbb R$ be the function described by the diagram in \cref{fig:ABB-2d-wt}, reproduced as \cref{fig:ABB-2d-wt-appendix}. Then, for any grid $\mathcal G$, we have
\begin{equation}\label{eq:ABB-local-app}
\sum_{(p,\bv)\in \mathcal A(\mathsf{ABB},\mathcal G)}F(p,\bv)\leq 12.
\end{equation}
\end{lemma}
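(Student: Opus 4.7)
The plan is as follows. By the local analysis paradigm described in \cref{subsec:local-philosophy} with $S = \{-1,0,1\}^2$ and distinguished letter $\B$, it suffices to check \eqref{eq:ABB-local-app} only for grids $\mathcal G$ with $\mathcal G(x) = \B$ for all $x \notin S$: each arrow $(p, \bv) \in \supp F$ has $p$ and $p + \bv$ in $S$, while $p + 2\bv$ either lies in $S$ or falls outside $S$ and is therefore automatically $\B$. This reduces the problem to a finite check over the $2^9 = 512$ labelings of the nine cells of $S$ by $\A$ or $\B$. Since $F$ is manifestly invariant under the dihedral group $D_4$ acting on $S$ as the symmetry group of the $3 \times 3$ square, the number of configurations to verify is further cut to roughly $70$ orbit representatives.

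I would then split on the value of $\mathcal G(0,0)$, since the center cell appears as a required $\B$-position in more arrows than any other cell. If $\mathcal G(0,0) = \A$, then every weight-$2$ arrow requiring the center to be $\B$ becomes inactive: in particular, the ``along-the-axis'' arrow at each edge cell and the ``diagonal-through-the-center'' arrow at each corner. What remains are the four diagonal arrows starting at the center (weight $2$ each, contributing at most $8$ and only if all four corners are $\B$), the two weight-$1$ diagonal arrows at each $\A$ edge cell, and the two edge-parallel weight-$2$ arrows at each $\A$ corner cell. Working out the mutual blocking relations between these surviving arrows --- an $\A$ at a corner blocks diagonal-from-center weight-$2$ contributions at that corner; an $\A$ at an edge blocks edge-parallel weight-$2$ contributions at the two adjacent corners --- one verifies the bound in a small number of sub-cases.

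If $\mathcal G(0,0) = \B$, I would stratify by the number of $\A$ cells among the eight perimeter cells and again apply the $D_4$ symmetry. The sparse extreme (zero, one, or two perimeter $\A$'s) is immediate from the local per-cell contributions (at most $6$ per corner, at most $4$ per edge). The dense extreme (six, seven, or eight perimeter $\A$'s) is equally immediate since most arrows require multiple $\B$'s among the perimeter cells, which are now scarce. The intermediate cases (three to five perimeter $\A$'s) require the most bookkeeping; each is settled by computing active arrow weights directly from \cref{fig:ABB-2d-wt-appendix}.

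The chief obstacle is this intermediate range of configurations, where neither a few-$\A$ nor a few-$\B$ simplification dominates. I expect this portion to be the longest in the written proof, though each individual orbit representative is dispatched by a short tally, and symmetry plus the observation that many $\A$ cells mutually block each other's contributions should keep the casework manageable.
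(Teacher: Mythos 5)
Your setup is sound and matches the paper's own framing: the reduction to the $2^9=512$ grids that equal $\B$ outside $S=\{-1,0,1\}^2$ is exactly how \cref{lem:ABB} invokes the paradigm of \cref{subsec:local-philosophy}, and since $F$ is $D_4$-invariant the check does reduce to orbit representatives (Burnside gives $102$ of them, not ``roughly $70$,'' though this is immaterial). The genuine gap is that the verification itself never happens. The entire content of this lemma \emph{is} the finite check, and your proposal defers precisely that: ``each is settled by computing active arrow weights directly'' and ``dispatched by a short tally'' are promises, not arguments, and not a single orbit in the range you yourself flag as hardest (center $\B$, three to five perimeter $\A$'s --- several dozen orbits) is actually computed. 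Worse, some cases you dismiss as immediate are not. In the ``dense extreme'' the scarcity heuristic fails as stated, because an appearance of $\mathsf{ABB}$ counted by $F$ need not consume two $\B$'s \emph{inside} $S$: every weight-$1$ arrow has its third cell outside $S$, where the letter is automatically $\B$ after the reduction. So with six perimeter $\A$'s one still has to track which weight-$1$ and weight-$2$ arrows survive cell by cell (e.g.\ center $\B$ with perimeter $\B$'s at $(1,1)$ and $(0,1)$ yields a total of $8$, but only after a genuine tally); the same caveat applies to your claim that the center-$\A$ case needs only ``a small number of sub-cases.''

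For contrast, the paper's proof of this lemma avoids enumeration altogether. It compresses a configuration into five statistics --- the center letter, the counts $m_\A,m_\B$ on the four side squares, and $n_\A,n_\B$ on the four corner squares --- partitions the arrows of \cref{fig:ABB-2d-wt-appendix} into three families, and bounds each family's contribution by a function of these statistics alone (for instance, the horizontal and vertical corner arrows contribute at most $\min(2m_\B,\,4\min(n_\A,n_\B))$, since each such appearance injectively identifies a $\B$ in a side square). Combined with the parity observation that the left side of \eqref{eq:ABB-local-app} is always even, this yields the bound $14$, and a short analysis of when equality could hold in every estimate rules out $14$, giving $12$. To repair your proposal you must either write out the full orbit-by-orbit tabulation explicitly (legitimate, but it has to appear on the page) or find an aggregation of this kind that collapses the casework.
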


\begin{figure}
\begin{tikzpicture}[scale=1]
    \localDiagram{0/0/1/1/2/below right,
                  0/0/-1/1/2/below left,
                  0/0/-1/-1/2/above left,
                  0/0/1/-1/2/above right,
                  1/0/-1/0/2/below,
                  1/0/-1/1/1/above right,
                  1/0/-1/-1/1/below right,
                  -1/0/1/0/2/below,
                  -1/0/1/1/1/above left,
                  -1/0/1/-1/1/below left,
                  0/1/0/-1/2/right,
                  0/1/-1/-1/1/above left,
                  0/1/1/-1/1/above right,
                  0/-1/0/1/2/right,
                  0/-1/-1/1/1/below left,
                  0/-1/1/1/1/below right,
                  1/1/-1/0/2/above,
                  1/1/0/-1/2/right,
                  1/1/-1/-1/2/above left,
                  -1/1/1/0/2/above,
                  -1/1/0/-1/2/left,
                  -1/1/1/-1/2/above right,
                  -1/-1/1/0/2/below,
                  -1/-1/0/1/2/left,
                  -1/-1/1/1/2/below right,
                  1/-1/-1/0/2/below,
                  1/-1/0/1/2/right,
                  1/-1/-1/1/2/below left}
\end{tikzpicture}
\caption{The weight function used to upper-bound $C_2(\mathsf{ABB})$.}
\label{fig:ABB-2d-wt-appendix}
\end{figure}
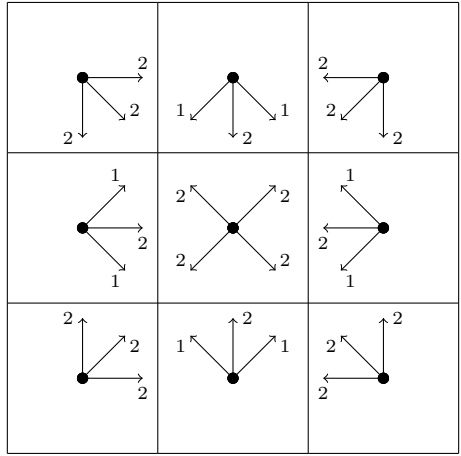

\begin{proof} Fix some grid $\mathcal G$. We must introduce some additional terminology and notation. The ``center square'' is the square in the center of \cref{fig:ABB-2d-wt-appendix}. The ``side squares'' are the squares which share a side of the center square, and the ``corner squares'' are the remaining four squares in the figure. Let $m_\A$ and $m_\B$ (resp.\ $n_\A$ and $n_\B$) be the number of $\A$'s and $\B$'s in the side (resp.\ corner) squares. Let $\ell$ be the letter in the center square. Write ($\star$) for the expression on the left side of \eqref{eq:ABB-local-app}. We observe that ($\star$) is even, since the contribution of the odd-labeled arrows is exactly the number of pairs of distinct letters obtained when reading the labels of the side squares around the center square. 

We can bound various terms in ($\star$) in the following way. (All terms will be accounted for in one of these three bullet points.)
\begin{enumerate}
    \item Consider the diagonal arrows labeled ``$2$'' in \cref{fig:ABB-2d-wt-appendix}. If $\ell=\A$ then the contribution of these arrows to the sum ($\star$) is exactly $2n_\B$. If $\ell=\B$ then the contribution of these arrows to ($\star$) is at most $2\min(n_\A,n_\B)$.

    \item Consider the arrows emanating from the squares adjacent to the center square of \cref{fig:ABB-2d-wt-appendix}. If $\ell=\A$ then the only arrows which may contribute to ($\star$) are those labeled ``$1$,'' and their total contribution is at most $2\min(m_\A,m_\B)$. If $\ell=\B$ then their contribution to ($\star$) is exactly $m_\A m_\B$ plus one for every occurrence of $\mathsf{ABB}$ passing horizontally or vertically through the center square. This count is at most $\min(m_\A,m_\B)$. The total contribution of these arrows is thus at most $m_\A m_\B+\min(m_\A,m_\B)$. 

    \item Consider the horizontal and vertical arrows emanating from the diagonally-adjacent-to-center squares. The total contribution of these arrows to ($\star$) is at most $2m_\B$, since each occurrence of $\mathsf{ABB}$ indicated by such an arrow uniquely identifies a $\B$ in a side-adjacent square. It is also at most $4\min(n_\A,n_\B)$, since each corner $\A$ or $\B$ identifies at most two such occurrences of $\mathsf{ABB}$.
\end{enumerate}

We can now verify \eqref{eq:ABB-local-app}. First, we treat the case where $\ell=\A$. Summing the contributions in ($\star$) as described in the bulleted list, we have
\begin{align}
\frac12(\star)
\notag&\leq n_\B+\min(m_\A,m_\B)+\min(2n_\A,2n_\B,m_\B)\\
\label{eq:A-center}&\leq n_\B+\frac{3m_\A+m_\B}4+\frac{2n_\A+m_\B}2\\
\notag&=\frac34(m_\A+m_\B)+n_\A+n_\B=7.
\end{align}
Since $\frac12(\star)$ is an integer, because, as we noted above, $(\star)$ is even, we either obtain \eqref{eq:ABB-local-app} or that equality holds everywhere. If equality holds in \eqref{eq:A-center}, then $2n_\A=m_\A=m_\B=2$. For equality to hold in (2) above, since $\ell=\A$, the two occurrences of $\B$ in side squares must be diametrically opposite the center square. However, for equality to hold in (3) above, the two occurrences of $\B$ must be adjacent to the unique $\A$ in a corner square. This means that equality cannot hold simultaneously in (2), (3), and \eqref{eq:A-center}. We conclude the result in the $\ell=\A$ case.

The case of $\ell=\B$ is slightly more unpleasant. Here we have
\[\frac12(\star)\leq\min(n_\A,n_\B)+\frac{m_\A m_\B+\min(m_\A,m_\B)}2+\min(2n_\A,2n_\B,m_\B).\]
If $m_\B=2$ then $m_\A m_\B+\min(m_\A,m_\B)=6$ and we have
\[\frac12(\star)\leq\min(n_\A,n_\B)+2\min(1,\min(n_\A,n_\B))+3\leq \min(n_\A,n_\B)+5\leq 7,\]
with equality if and only if equality holds in each of (1), (2), (3) and additionally $n_\A=n_\B=2$. If $m_\B\in\{0,4\}$ then $m_\A m_\B+\min(m_\A,m_\B)=0$ and we have $\frac12(\star)\leq 3\min(n_\A,n_\B)\leq 6$. If $m_\B\in\{1,3\}$ then $m_\A m_\B+\min(m_\A,m_\B)=4$ and we have
\[\frac12(\star)\leq\min(n_\A,n_\B)+2+\min(m_\B,2\min(n_\A,n_\B))\leq \min(n_\A,n_\B)+2+m_\B\leq 2+2+3=7,\]
with equality if and only if equality holds in each of (1), (2), (3) and additionally $n_\A=n_\B=2$ and $m_\B=3$. In either case, we obtain $n_\A=n_\B=2$. Since equality holds in (1), the two occurrences of $\B$ in corner squares must occur in the same row or column. This implies that at most two occurrences of $\mathsf{ABB}$ can be counted in (3). So (for equality to hold in (3)) we must have $m_\B=2$; moreover, the two occurrences of $\B$ in side squares must be diametrically opposite the center square. However, this means that equality cannot hold in (2), as the total contribution of those arrows to ($\star$) in this case is $4$ instead of $6$. This concludes the proof.    
\end{proof}

It is similarly possible to prove by hand that $\mathsf{ABCC}$ is $2$-stackable. This requires a very similar computation to that of \cref{lem:ABB-by-hand}. As before, the reader is encouraged to draw the relevant pictures.

\begin{lemma}\label{lem:ABCC-by-hand}
Let $F\colon \ZZ^2\times(\{-1,0,1\}^2\setminus\{\zero\})\to\mathbb R$ be the function described by the diagram in \cref{fig:ABCC-2d-wt}, reproduced as \cref{fig:ABCC-2d-wt-appendix}. Then, for any grid $\mathcal G$, we have
\begin{equation}\label{eq:ABCC-local-app}
\sum_{(p,\bv)\in \mathcal A(\mathsf{ABCC},\mathcal G)}F(p,\bv)\leq 12.\end{equation}
\end{lemma}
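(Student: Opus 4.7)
The plan is to adapt the structural case analysis from the proof of \cref{lem:ABB-by-hand} to the more intricate setting of $\mathsf{ABCC}$. I would fix an arbitrary grid $\mathcal G$, write $(\star)$ for the left side of \eqref{eq:ABCC-local-app}, and follow the paradigm of \cref{subsec:local-philosophy} with $A = \mathsf{C}$: the $\mathsf{A}$- and $\mathsf{B}$-positions of any firing arrow lie in the support set $S = \{(0,0)\} \cup \{(\pm 1, 0), (0, \pm 1)\} \cup \{(\pm 1, \pm 1)\} \cup \{(\pm 2, \pm 2)\}$, so I may reduce to the case where $\mathcal G(x) = \mathsf{C}$ for $x \notin S$. (The $\mathsf{C}$-ends $p + 2\bv, p + 3\bv$ of each arrow need no constraint, since $w_3 = w_4 = \mathsf{C}$.)

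I would then partition $\supp F$ into the five symmetry classes visible in \cref{fig:ABCC-2d-wt}: the weight-$6$ diagonal arrows at the corners $(\pm 2, \pm 2)$, the weight-$3$ axis-aligned arrows at $(\pm 1, \pm 1)$, the weight-$1$ diagonal arrows at $(\pm 1, \pm 1)$, the weight-$4$ axis-aligned arrows at $(\pm 1, 0)$ and $(0, \pm 1)$, and the weight-$3$ diagonal arrows at $(0,0)$. For each class I would bound the total contribution to $(\star)$ using statistics of $\mathcal G$ on the four shells of $S$ (center, axis-adjacent, diagonal-adjacent, corner), obtaining ``$\min(a_i, b_j)$''-type inequalities directly analogous to the ones appearing in the $\mathsf{ABB}$ argument; for instance, each weight-$6$ arrow fires only when a corner is $\mathsf{A}$, the adjacent diagonal cell is $\mathsf{B}$, the center is $\mathsf{C}$, and the opposite diagonal cell is $\mathsf{C}$. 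After case-splitting on the letter $\ell = \mathcal G((0,0))$ into the three subcases $\ell \in \{\mathsf{A}, \mathsf{B}, \mathsf{C}\}$, a linear combination of these shell-by-shell bounds should give $(\star) \leq 12$ with slack in all but a few tight configurations, which would then be ruled out by parity and geometric-adjacency considerations as in \cref{lem:ABB-by-hand}.

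The main obstacle is the sheer bookkeeping. Compared to the $\mathsf{ABB}$ case, the alphabet has three letters instead of two, the word has length four rather than three (so each arrow's firing depends on four cells), and the support is substantially larger ($|S| = 13$ vs.\ $9$). Different arrow classes share many cells, so the tight extremal configurations at the end of the casework are more numerous and more delicate to eliminate; the cleanest organization is likely to write out one ``master'' inequality per center-letter case and verify each by a short bound on the shell counts. I expect the tightest case to be $\ell = \mathsf{C}$ with $\mathsf{A}$'s concentrated at the corners and $\mathsf{B}$'s at the adjacent diagonal cells, mimicking the structure of the $d$-stackable extremal grid $\Grid(\mathsf{ABCCBCCA}) \times (\Zmod{1})$ which achieves $c_2(\mathsf{ABCC}, \cdot) = 6/5$.
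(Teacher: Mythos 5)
Your setup is sound and matches the paper's framework: the reduction to grids equal to $\mathsf{C}$ outside the $13$-cell window $S$ is a correct instance of the paradigm of \cref{subsec:local-philosophy}, your five symmetry classes are the right ones, and your firing condition for the weight-$6$ arrows is accurate. Moreover, splitting on the center letter is a viable organization, and it is essentially the paper's organization in disguise: the paper splits on the highest-weight firing arrow (weight $6$, then weight $4$, then the center weight-$3$'s, then none), and since a firing weight-$6$ arrow forces the center to be $\mathsf{C}$, the weight-$4$ and weight-$1$ arrows force it to be $\mathsf{B}$, and the center weight-$3$ arrows force it to be $\mathsf{A}$, the two decompositions agree.

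The gaps lie in what you defer. First, the finite verification \emph{is} the entire content of this lemma, and the step ``a linear combination of these shell-by-shell bounds should give $(\star)\le 12$'' is exactly the claim to be proved; aggregate shell counts cannot finish it, because the exclusions depend on \emph{where} letters sit within a shell, not how many there are (e.g.\ whether a firing weight-$6$ arrow coexists with inner-corner axis arrows depends on which inner corners carry the forced $\mathsf{B}$ and $\mathsf{C}$). The paper resolves this not with count inequalities but with forcing arguments: each firing high-weight arrow pins down letters that kill specific families of other arrows. Second, the ``parity'' tool you invoke does not exist here: the evenness of the weighted sum in \cref{lem:ABB-by-hand} uses the two-letter alphabet and the pairing of odd-weight arrows around the center, and it fails for $\mathsf{ABCC}$ --- the grid with $\mathsf{A}$ at the center, $\mathsf{B}$ at one inner corner, and $\mathsf{C}$ everywhere else has total weight exactly $3$. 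Third, your picture of the tight case is incomplete and partly wrong: configurations achieving exactly $12$ exist for \emph{every} center letter (two weight-$6$ arrows at adjacent outer corners when the center is $\mathsf{C}$; one weight-$4$ plus two parallel inner-axis plus two weight-$1$ arrows when it is $\mathsf{B}$; four center arrows when it is $\mathsf{A}$; four inner-axis arrows when no center-involving arrow fires), so the final elimination must handle all of these, and the grid you cite as the guiding extremal example, $\Grid(\mathsf{ABCCBCCA})\times\Zmod{1}$, is not $\mathsf{ABCC}$-extremal at all: its search line has concentration $1/8$, whereas the extremal stacked grid is $\Grid(\mathsf{ABCCB})\times\Zmod{1}$ with concentration $2/5$ per line.
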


\begin{figure}
\begin{tikzpicture}[scale=0.6]
    \localDiagram{-2/-2/1/1/6/below right,
                  2/-2/-1/1/6/below left,
                  2/2/-1/-1/6/above left,
                  -2/2/1/-1/6/above right,
                  -1/-1/1/0/3/below,
                  -1/-1/0/1/3/left,
                  1/-1/-1/0/3/below,
                  1/-1/0/1/3/right,
                  1/1/-1/0/3/above,
                  1/1/0/-1/3/right,
                  -1/1/1/0/3/above,
                  -1/1/0/-1/3/left,
                  -1/-1/1/1/1/below right,
                  1/-1/-1/1/1/below left,
                  1/1/-1/-1/1/above left,
                  -1/1/1/-1/1/above right,
                  -1/0/1/0/4/above,
                  1/0/-1/0/4/above,
                  0/-1/0/1/4/left,
                  0/1/0/-1/4/left,
                  0/0/1/1/3/below right,
                  0/0/-1/1/3/below left,
                  0/0/-1/-1/3/above left,
                  0/0/1/-1/3/above right}
\end{tikzpicture}
\caption{The weight function used to upper-bound $C_2(\mathsf{ABCC})$.}
\label{fig:ABCC-2d-wt-appendix}
\end{figure}
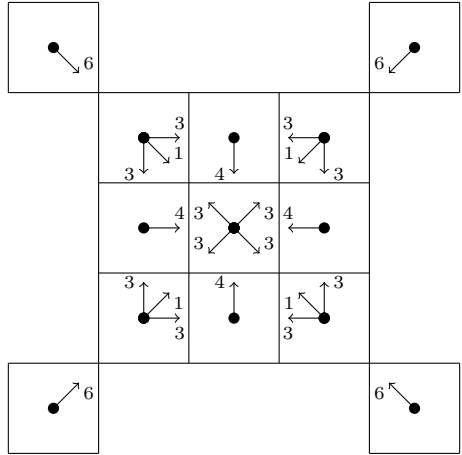

\begin{proof} Fix some grid $\mathcal{G}$. The ``outer corner squares" are the corners of the larger 5-by-5 square. The ``inner corner squares" are the corners of the central 3-by-3 square. The ``center square" is the square in the very center of the figure, and the ``side squares" are those squares which share an edge with the center square. Write $(\dagger)$ for the expression on the left side of \eqref{eq:ABCC-local-app}.

The most natural way to do the casework is by the highest-weight arrows contributing to $(\dagger)$.
\begin{itemize}
    \item If one of the arrows in the outer corner squares contributes to $(\dagger)$, the combination of squares
    whose labels are forced means none of the other arrows,
    except for the arrows in the two nearest other outer corner squares, one vertical arrow in one of the inner corner squares, and one horizontal arrow in another of the inner corner squares, may contribute to $(\dagger)$. If one of the arrows in one of the other corner squares contributes to $(\dagger)$, this is the only other arrow which may contribute to $(\dagger)$, and in this case $(\dagger)$ is exactly 12. Otherwise, the two remaining eligible arrows each have weight 3, so $(\dagger)$ is again at most 12.
    
    \item If none of the arrows with weight 6 contributes to $(\dagger)$, but at least one of the arrows with weight 4 contributes to $(\dagger)$:
    \begin{itemize}
        \item If another arrow with weight 4 contributes to $(\dagger)$, the two contributing arrows have to be in diagonally adjacent side squares. This rules out contributions from every other arrow except the four arrows with weight 1, at most two of which can contribute simultaneously, hence in this case $(\dagger)$ is at most 10.
        \item If there is only one contributing arrow with weight 4, this rules out contributions from every arrow except four of the arrows of weight 3 (the ones parallel to the arrow of weight 4) and the four arrows of weight 1. At most two of each can contribute simultaneously, so in this case $(\dagger)$ is at most 12.
    \end{itemize}
    \item If none of the arrows with weight 4 or 6 contribute to $(\dagger)$, but at least one of the arrows from the center square with weight 3 contributes to $(\dagger)$:
    \begin{itemize}
        \item If at least three arrows in the center square contribute to $(\dagger)$, this rules out all other contributions. In this case $(\dagger)$ is exactly 9 or 12.
        \item If exactly two arrows in the center square contribute to $(\dagger)$,
        \begin{itemize}
        \item if they form a right angle, that rules out all other contributions except for two other arrows of weight 3. These cannot both contribute simultaneously, hence in this case $(\dagger)$ is at most 9.
        \item if they form a 180-degree angle, that rules out all other contributions, hence in this case $(\dagger)$ is exactly 6.
        \end{itemize}
        \item If exactly one arrow in the center square contributes to $(\dagger)$, that rules out all other contributions, except for four of the arrows of weight 3, at most two of which may contribute to $(\dagger)$, hence in this case $(\dagger)$ is at most 9.
        \end{itemize}
    \item Finally, if no arrows in the center square contributes to $(\dagger)$, it is possible for at most four of the arrows of weight 3 in the inner corner squares to contribute to $(\dagger)$. If exactly four of the arrows of weight 3 in the inner corner squares contribute to $(\dagger)$, they must be the arrows at two diagonally opposing inner corner squares, and all other contributions are ruled out, hence in this case $(\dagger)$ is exactly 12.
    \end{itemize}
    
We have covered every possible case with at least four arrows of weight 3 or more. To finish, note that at most two of the vectors of weight 1 may simultaneously contribute to $(\dagger)$, hence in all remaining cases $(\dagger)$ is at most 11, as at most three arrows of weight 3 and two arrows of weight 1 can contribute to $(\dagger)$. In all cases $(\dagger)$ is at most 12, as desired.
\end{proof}

Lastly, we provide an upper bound on $C_2(\mathsf{ABBB})$, and thus a (computer-assisted) justification for the claim made in \cref{sec:conclusion} that $\mathsf{ABBB}$ is not $2$-slantable. In our attempts to answer \cref{qn:ABBB}, we have performed some computations with different heuristics to attempt to upper-bound $C_2(\mathsf{ABBB})$. The best bound we have been able to obtain is $C_2(\mathsf{ABBB})\leq \frac{59526}{35459}\approx 1.679$, which comes from the weight function described in \cref{fig:ABBB-upper}. This bound follows from \cref{prop:local}, with the relevant bound (ii) verified by computer.

\begin{figure}
\begin{tikzpicture}[scale=1.5]
    \localDiagram{1/1/0/1/12448/below left,
		  1/1/1/0/12448/below left,
		  1/5/0/-1/12448/above left,
		  1/5/1/0/12448/above left,
		  5/1/0/1/12448/below right,
		  5/1/-1/0/12448/below right,
		  5/5/0/-1/12448/above right,
		  5/5/-1/0/12448/above right,
		  1/1/1/1/5656/above right,
		  1/5/1/-1/5656/below right,
		  5/1/-1/1/5656/above left,
		  5/5/-1/-1/5656/below left,
		  1/2/0/1/11198/below left,
		  1/4/0/-1/11198/above left,
		  5/2/0/1/11198/below right,
		  5/4/0/-1/11198/above right,
		  2/1/1/0/11198/below left,
		  4/1/-1/0/11198/below right,
		  2/5/1/0/11198/above left,
		  4/5/-1/0/11198/above right,
		  1/2/1/0/13432/above,
		  1/4/1/0/13432/below,
		  5/2/-1/0/13432/above,
		  5/4/-1/0/13432/below,
		  2/1/0/1/13432/above,
		  4/1/0/1/13432/above,
		  2/5/0/-1/13432/below,
		  4/5/0/-1/13432/below,
		  1/2/1/1/18025/above,
		  5/2/-1/1/18025/above,
		  1/4/1/-1/18025/below,
		  5/4/-1/-1/18025/below,
		  2/1/1/1/18025/above,
		  2/5/1/-1/18025/below,
		  4/1/-1/1/18025/above,
		  4/5/-1/-1/18025/below,
		  1/2/1/-1/1079/below left,
		  1/4/1/1/1079/above left,
		  5/2/-1/-1/1079/below right,
		  5/4/-1/1/1079/above right,
		  2/1/-1/1/1079/below left,
		  4/1/1/1/1079/below right,
		  2/5/-1/-1/1079/above left,
		  4/5/1/-1/1079/above right,
		  1/3/0/1/2682/left,
		  1/3/0/-1/2682/left,
		  5/3/0/1/2682/right,
		  5/3/0/-1/2682/right,
		  3/1/1/0/2682/below,
		  3/1/-1/0/2682/below,
		  3/5/1/0/2682/above,
		  3/5/-1/0/2682/above,
		  1/3/1/0/3600/above,
		  5/3/-1/0/3600/above,
		  3/1/0/1/3600/above,
		  3/5/0/-1/3600/below,
		  1/3/1/-1/7086/below right,
		  1/3/1/1/7086/above right,
		  5/3/-1/-1/7086/below left,
		  5/3/-1/1/7086/above left,
		  3/1/-1/1/7086/above left,
		  3/1/1/1/7086/above right,
		  3/5/-1/-1/7086/below left,
		  3/5/1/-1/7086/below right,
		  2/2/1/0/17472/below left,
		  2/2/0/1/17472/left,
		  2/4/1/0/17472/above left,
		  2/4/0/-1/17472/left,
		  4/2/-1/0/17472/below right,
		  4/2/0/1/17472/right,
		  4/4/-1/0/17472/above right,
		  4/4/0/-1/17472/right,
		  2/2/1/1/20490/above,
		  4/4/-1/-1/20490/below,
		  2/4/1/-1/20490/below,
		  4/2/-1/1/20490/above,
		  2/2/-1/-1/13400/below,
		  4/4/1/1/13400/above,
		  2/4/-1/1/13400/above,
		  4/2/1/-1/13400/below,
		  2/3/0/-1/6358/below,
		  4/3/0/-1/6358/below,
		  2/3/0/1/6358/above,
		  4/3/0/1/6358/above,
		  3/2/-1/0/6358/below,
		  3/2/1/0/6358/below,
		  3/4/-1/0/6358/above,
		  3/4/1/0/6358/above,
		  2/3/1/-1/11758/below,
		  2/3/1/1/11758/above,
		  4/3/-1/-1/11758/below,
		  4/3/-1/1/11758/above,
		  3/2/-1/1/11758/above,
		  3/2/1/1/11758/above,
		  3/4/-1/-1/11758/below,
		  3/4/1/-1/11758/below,
		  2/3/1/0/5400/above,
		  4/3/-1/0/5400/above,
		  3/2/0/1/5400/right,
		  3/4/0/-1/5400/right,
		  2/3/-1/-1/8129/above left,
		  2/3/-1/1/8129/below left,
		  4/3/1/-1/8129/above right,
		  4/3/1/1/8129/below right,
		  3/2/-1/-1/8129/below left,
		  3/2/1/-1/8129/below right,
		  3/4/-1/1/8129/above left,
		  3/4/1/1/8129/above right,
		  3/3/0/-1/5656/right,
		  3/3/0/1/5656/right,
		  3/3/-1/0/5656/below,
		  3/3/1/0/5656/below,
		  3/3/1/1/10136/above,
		  3/3/1/-1/10136/below,
		  3/3/-1/1/10136/above,
		  3/3/-1/-1/10136/below}
\end{tikzpicture}
\caption{The weight function used to upper-bound $C_2(\mathsf{ABBB})$.}
\label{fig:ABBB-upper}
\end{figure}
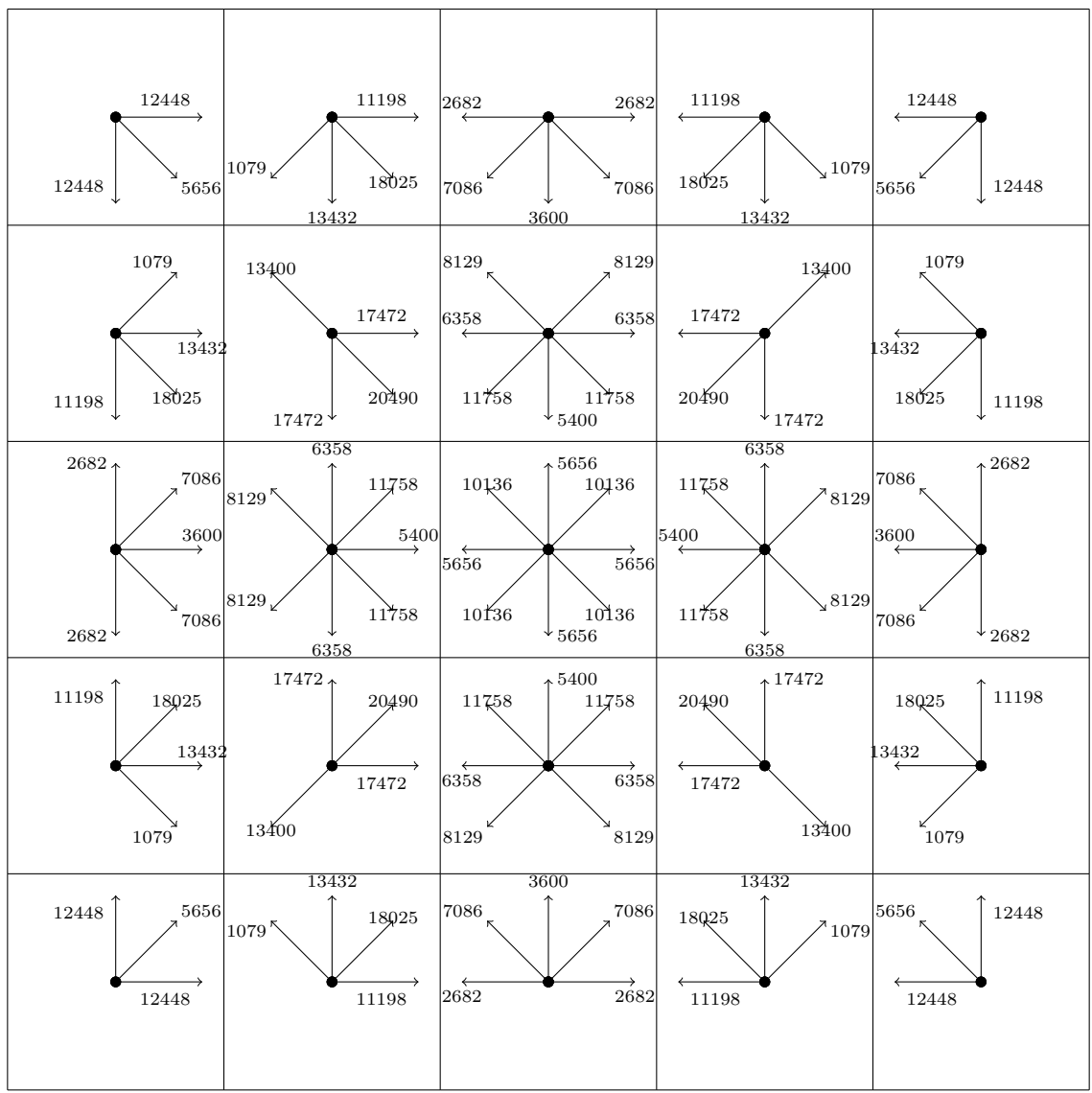

\end{document}